\documentclass{amsart}

\usepackage{graphicx}
\usepackage{subfigure}
\usepackage{amsmath}
\usepackage{amsthm}
\usepackage{amsfonts}

\usepackage{hyperref}
\usepackage{todonotes}
\usepackage{verbatim} 

\newtheorem{theorem}{Theorem}[section]
\newtheorem{lemma}[theorem]{Lemma}

\theoremstyle{definition}
\newtheorem{definition}[theorem]{Definition}

\newtheorem{conjecture}[theorem]{Conjecture}

\theoremstyle{remark}

\numberwithin{equation}{section}

\begin{document}

\title[Laplacians on Snowflake Domains and Filled Julia Sets]{Spectral Properties of Laplacians on Snowflake Domains and Filled Julia Sets}

\author{Robert S. Strichartz}
\address{Department of Mathematics, Cornell University, Malott Hall, Ithaca, NY 14853, USA}
\email{str@math.cornell.edu}

\author{Samuel C. Wiese}
\address{Department of Mathematics, Universit\"at Leipzig, Augustusplatz 10, 04109 Leipzig, Germany}
\email{sw31hiqa@studserv.uni-leipzig.de}
\thanks{SCW was supported by the Foundation of German Business (SDW)}

\subjclass[2000]{35P05}

\date{March 19, 2019.}

\keywords{Laplacian, eigenvalues, eigenfunctions, snowflake domains, filled Julia sets}

\begin{abstract}

We present eigenvalue data and pictures of eigenfunctions of the classic and quadratic snowflake fractal and of quadratic filled julia sets. Furthermore, we approximate the area and box-counting dimension of selected Julia sets to compare the eigenvalue counting function with the Weyl term.

\end{abstract}

\maketitle


\section{Introduction.}

We study eigenvalues and eigenfunctions of the Laplacian on Snowflake Domains and certain selected filled Julia sets. The Snowflake domains are examples of curves that are continuous everywhere but differentiable nowhere. They have finite area bounded by an infinitely long line. We will start by looking at the classic Snowflake and the quadratic snowflake with different parameters.

Then we analyze the spectrum of filled-in Julia sets from the main bulb and compare their counting functions with the Weyl term. For the area we use a simple approximation; for the dimension we use the box-counting dimension. We will describe three ways of approximating the spectrum of the Basilica and the Rabbit, as well as the junctions points from the main bulb to the Basilica bulb and the Rabbit bulb: by iteration, by "walking to these points" (only junctions) and by analyzing the quasicircles that they consist of (only basilica/rabbit).

The Laplacian on the surface is just the usual two dimensional Laplacian $\Delta = \frac{\partial^2}{\partial x^2} + \frac{\partial^2}{\partial y^2}$. By the spectrum we mean a study of both the eigenvalues $\lambda$ and eigenfunctions $u$ satisfying 
\begin{equation}
\label{eq:cond}
-\Delta u = \lambda u
\end{equation}
with either Dirichlet or Neumann boundary conditions. It is known that the eigenvalues form an increasing sequence $0 = \lambda_0 < \lambda_1 \leq \lambda_2 \leq \ldots$ tending to infinity, and satisfying the Weyl asymptotic law
\begin{equation*}
\label{eq:weyl}
N(t) = \#\{\lambda_j \leq t\} \sim \frac{A}{4\pi}t
\end{equation*}
where $A$ is the area of the surface. We will study the difference
\begin{equation*}
D_1(t) = N(t) - \frac{A}{4\pi}t
\end{equation*}
and
\begin{equation*}
D_2(t) = \frac{D_1(t)}{t^\beta}
\end{equation*}
where $\beta$ is the box-counting dimension divided by two.
We use the Finite Element Method (FEM) and linear splines for our computations. The website \url{http://pi.math.cornell.edu/\~sw972} \cite{W} contains the programs we used and more data on eigenvalues and eigenfunctions.

\begin{figure}[h]
    \centering
    \subfigure[Zoom into a mesh of the Classic Snowflake]
    {
        \includegraphics[height=1.7in]{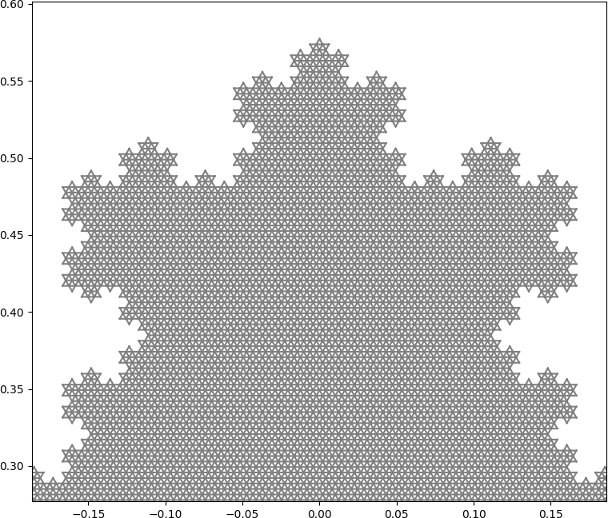}
    }
    \hfill
    \subfigure[Zoom into a mesh of the Basilica]
    {
        \includegraphics[height=1.7in]{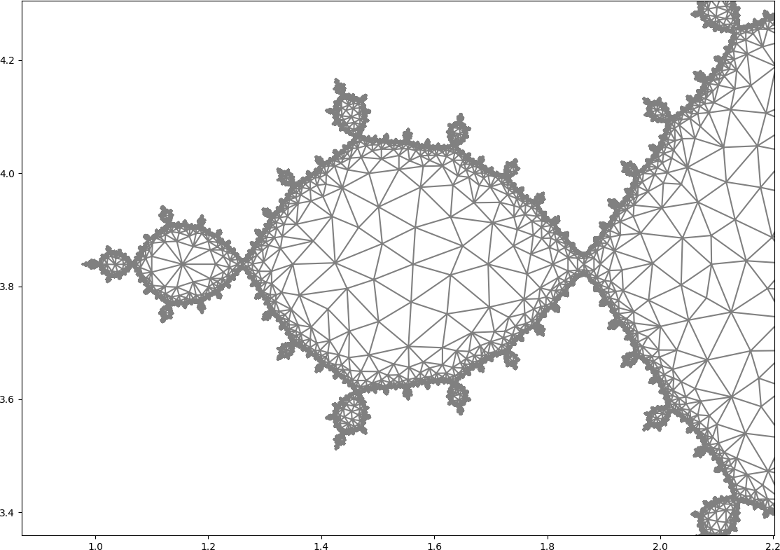}
    }
    \caption{Examples of meshes}
    \label{fig:sfbcd}
\end{figure}

The organization of the paper is as follows. In section 2 we describe the geometry of the snowflake domains. In section 3 we discuss the eigenvalues for the snowflake domains, and in section 4 the eigenfunctions. The reader should consult \cite{K}, \cite{L1}, \cite{L2}, \cite{LNRG}, \cite{LP}, \cite{NSS}, \cite{P} for earlier work on these questions. In section 5 we describe the geometry of the filled Julia sets. In section 6 and 7 we discuss the eigenvalues and eigenfunctions. For earlier references see \cite{S}, \cite{Y}. In section 8 we give a discussion of the significance of our results, suggestions for future research and some interesting conjectures.

\clearpage

\section{Geometry of Snowflakes.}
We will look at the classic Koch snowflake and three chosen quadratic snowflakes. The symmetries of the classic Snowflake is the group $D_6$, the quadratic snowflakes have the $D_8$ symmetry group. The dihedral-$6$ group of symmetries of the hexagon is generated by $6$ reflections.

\subsection{Classic Snowflake.}
We choose the side length of the level 1 triangle to be $1$. We construct the Clasic Snowflake domain by gluing together three Koch curves, each generated by the IFS
\begin{align*}
f_1(x)&=\left(
\begin{matrix}
\frac{1}{3} & 0 \\
0 & \frac{1}{3}
\end{matrix}\right)
x\\
f_2(x)&=\left(
\begin{matrix}
\frac{1}{6} & -\frac{\sqrt{3}}{6} \\
\frac{\sqrt{3}}{6} & \frac{1}{6}
\end{matrix}\right)
x+\left(
\begin{matrix}
\frac{1}{3} \\ 0
\end{matrix}\right)\\
f_3(x)&=\left(
\begin{matrix}
\frac{1}{6} & \frac{\sqrt{3}}{6} \\
-\frac{\sqrt{3}}{6} & \frac{1}{6}
\end{matrix}\right)
x+\left(
\begin{matrix}
\frac{1}{2} \\ \frac{\sqrt{3}}{6}
\end{matrix}\right)\\
f_4(x)&=\left(
\begin{matrix}
\frac{1}{3} & 0 \\
0 & \frac{1}{3}
\end{matrix}\right)
x+\left(
\begin{matrix}
\frac{2}{3} \\ 0
\end{matrix}\right)
\end{align*}
and shown in Fig. \ref{fig:csnowflakes}.

\begin{figure}[h!]
    \centering
    \subfigure[Level 1]
    {
        \includegraphics[width=0.8in]{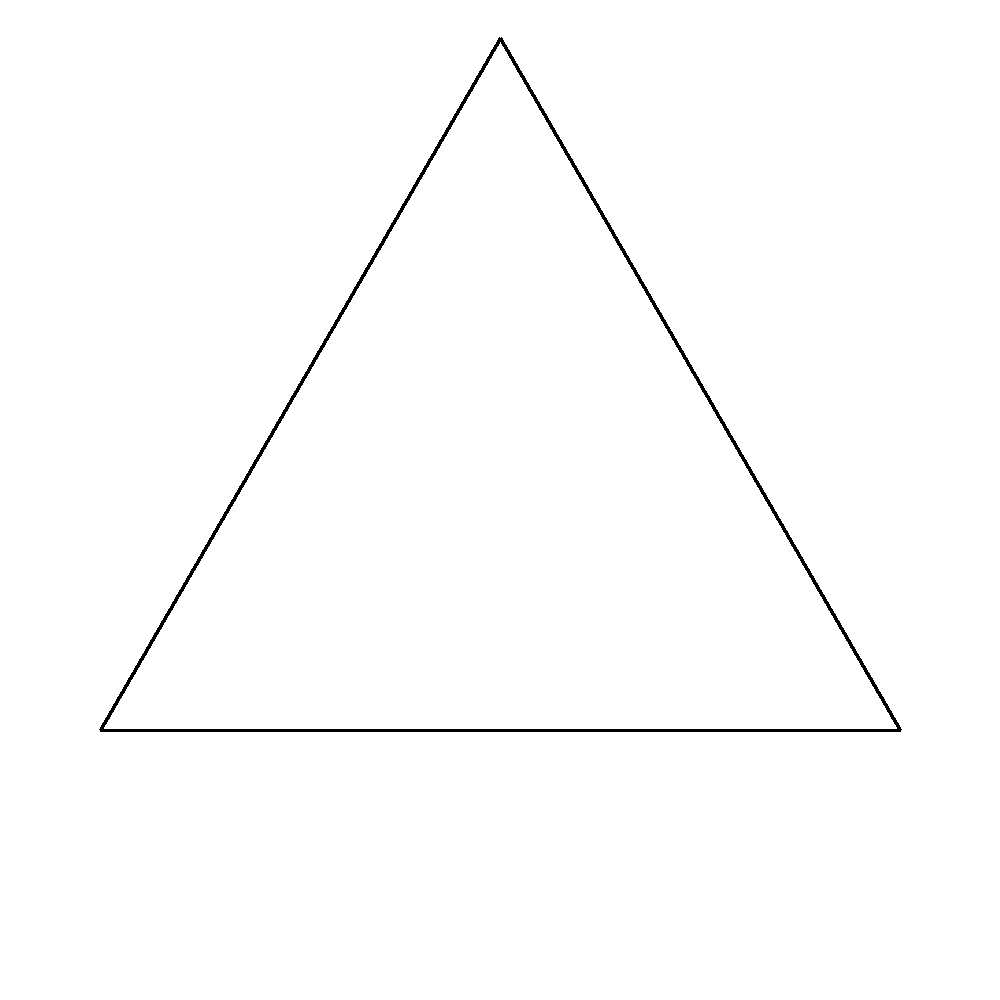}
    }
    \hfill
    \subfigure[Level 2]
    {
        \includegraphics[width=0.8in]{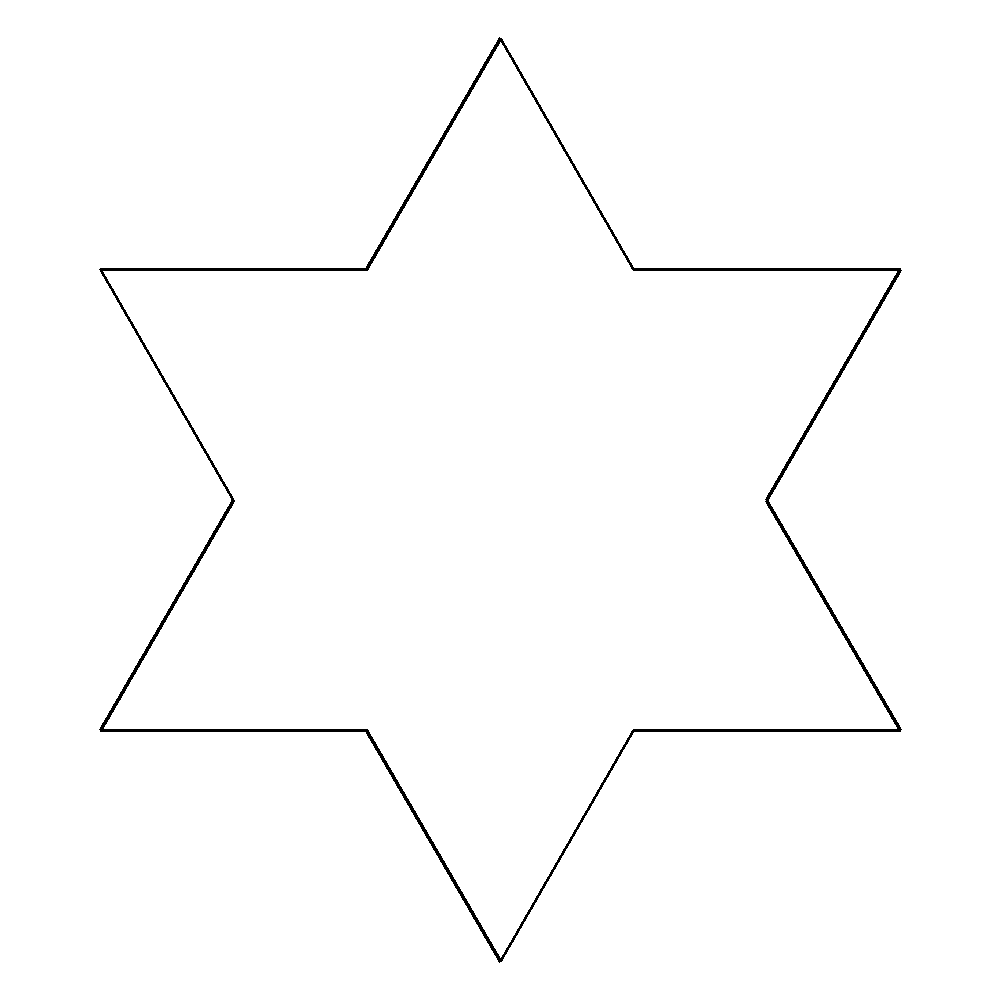}
    }
    \hfill
    \subfigure[Level 3]
    {
        \includegraphics[width=0.8in]{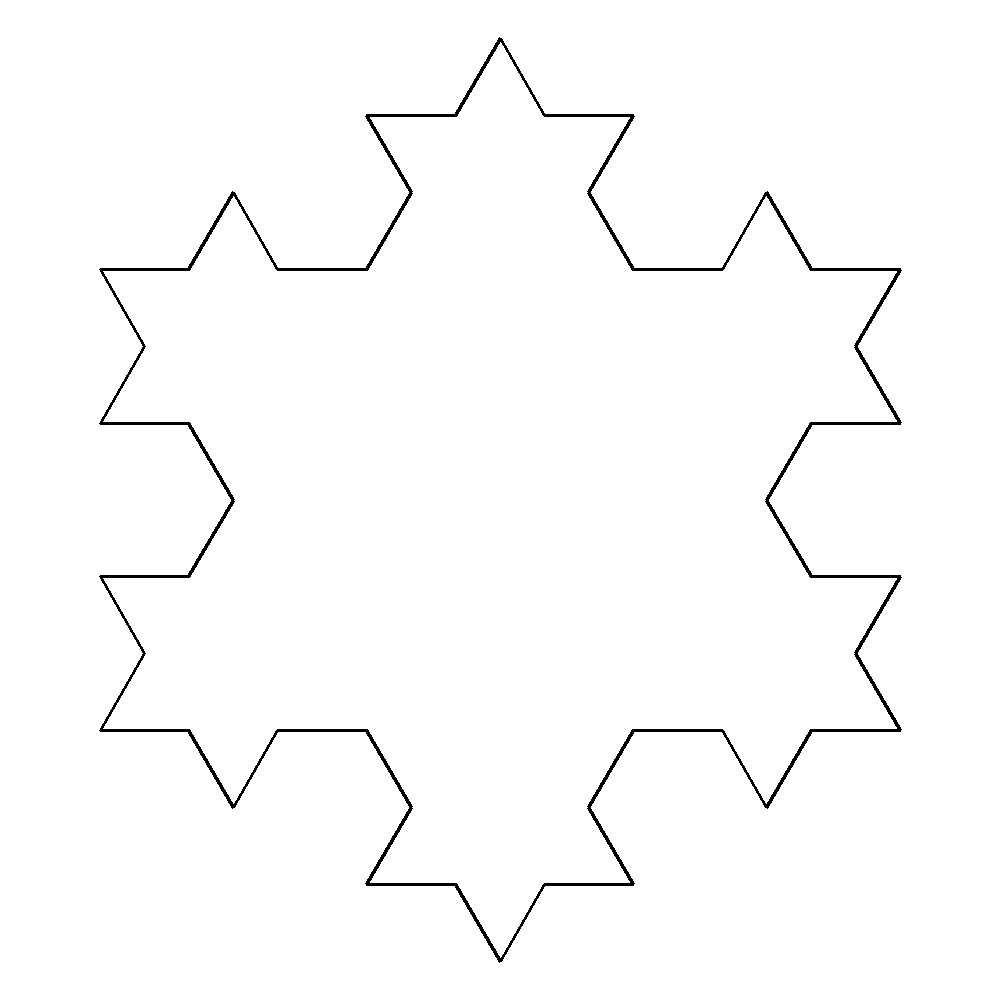}
    }
    \hfill
    \subfigure[Level 4]
    {
        \includegraphics[width=0.8in]{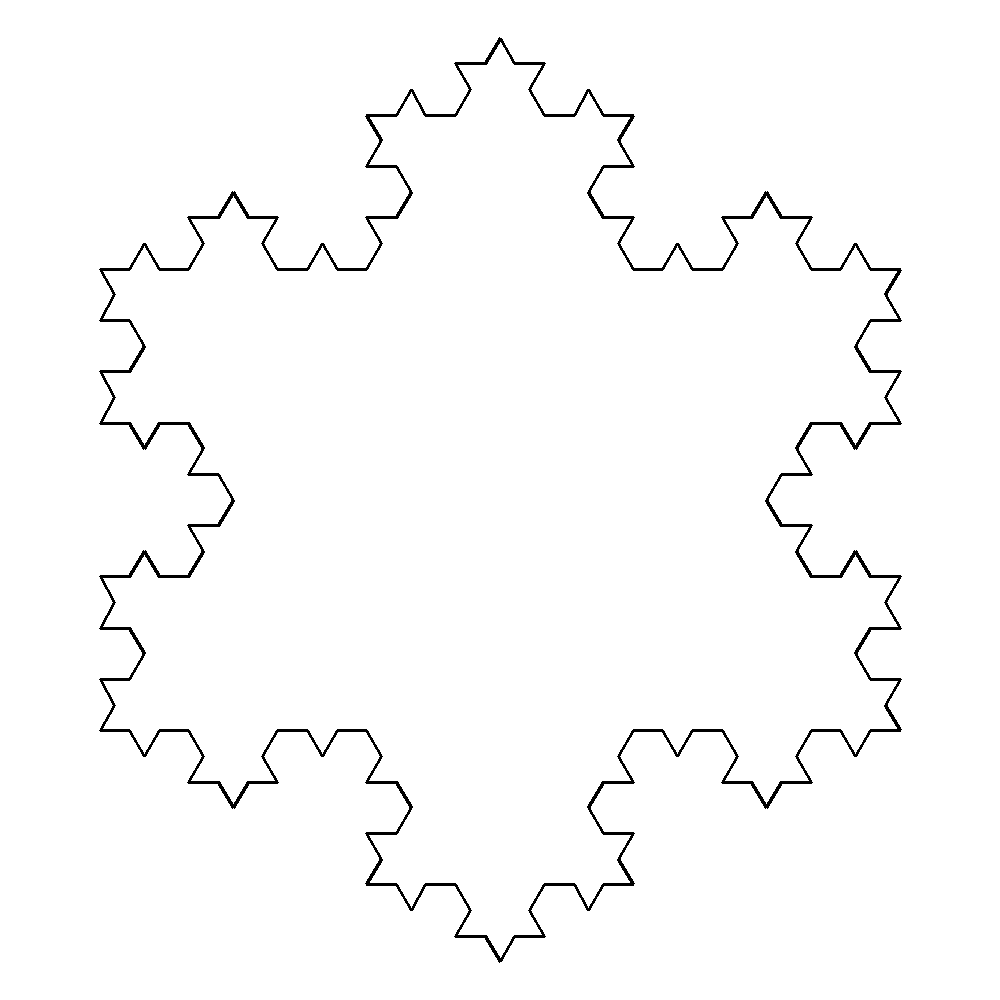}
    }
    \hfill
    \caption{The Classic Snowflake}
    \label{fig:csnowflakes}
\end{figure}

\begin{lemma}
The limit of the area of the classic snowflake with side length $1$ is 
\begin{equation}
\label{eq:sfarea}
A=\frac{2\sqrt{3}}{5}
\end{equation}
\end{lemma}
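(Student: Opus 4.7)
The plan is to compute the area by summing a geometric series that tracks the incremental area added at each iteration of the Koch construction, starting from the initial equilateral triangle.

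First I would record the base case: the level-$0$ shape is an equilateral triangle of side length $1$, which has area $A_0 = \frac{\sqrt{3}}{4}$. Then I would set up the combinatorial bookkeeping for the iteration. At level $n$, the boundary consists of $3 \cdot 4^n$ congruent segments, each of length $(1/3)^n$; this follows by induction from the fact that each Koch replacement turns one segment into four of one-third the length. To pass from level $n$ to level $n+1$, one outward-pointing equilateral triangle is glued onto the middle third of each such segment, so the newly added triangles have side length $(1/3)^{n+1}$ and each contributes area $\frac{\sqrt{3}}{4} \cdot 9^{-(n+1)}$.

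Next I would compute the total area increment at step $n+1$:
\begin{equation*}
\Delta A_{n+1} = 3 \cdot 4^n \cdot \frac{\sqrt{3}}{4 \cdot 9^{n+1}} = \frac{\sqrt{3}}{12} \left(\frac{4}{9}\right)^n.
\end{equation*}
Since $4/9 < 1$, the resulting geometric series converges, and the total area is
\begin{equation*}
A = \frac{\sqrt{3}}{4} + \sum_{n=0}^{\infty} \frac{\sqrt{3}}{12}\left(\frac{4}{9}\right)^n = \frac{\sqrt{3}}{4} + \frac{\sqrt{3}}{12} \cdot \frac{9}{5} = \frac{5\sqrt{3} + 3\sqrt{3}}{20} = \frac{2\sqrt{3}}{5},
\end{equation*}
which is the claimed value. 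Monotone convergence of the nested approximating polygons justifies identifying this limit with the actual area of the snowflake domain.

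There is no real obstacle here; this is an elementary geometric-series argument. The only place to be careful is the combinatorics of the iteration (counting the $3 \cdot 4^n$ sides and verifying that the newly added triangles are disjoint from the existing domain, so areas genuinely add), which is handled by the standard inductive description of the Koch construction and the fact that the new triangles point outward into previously unoccupied regions.
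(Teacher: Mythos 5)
Your proposal is correct and follows essentially the same route as the paper: both compute the area added at each stage as a geometric series with ratio $\tfrac{4}{9}$ (you add $3\cdot 4^n$ triangles of area $\tfrac{\sqrt{3}}{4}\cdot 9^{-(n+1)}$ at step $n+1$, matching the paper's count of $3\cdot 4^{m-1}$ triangles of area $A_0/9^m$ at level $m$) and then sum to get $\tfrac{2\sqrt{3}}{5}$. The arithmetic checks out, and your explicit remark about the disjointness of the outward-pointing triangles is a point the paper leaves implicit.
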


\begin{proof}
We start with a single triangle with side length $1$ at level 0, so $A_0=\frac{\sqrt{3}}{4}$. The number of triangles added in level $m$ is $3\cdot 4^{m-1}$. The area of each triangle added in level $m$ is one ninth of the area of each triangle added in the level $m-1$, so the area of a single triangle added in level $m$ is $\frac{A_0}{9^m}$. The total area added when going from level $m$ to $m+1$ is then $\frac{3}{4}\cdot\left(\frac{4}{9}\right)^m\cdot A_0$. The total area of the snowflake at level $m$ is then
\begin{equation*}
A_m=A_0\left(1+\frac{3}{4}\sum_{k=1}^m\left(\frac{4}{9}\right)^m\right)=A_0\left(1+\frac{1}{3}\sum_{k=1}^{m-1}\left(\frac{4}{9}\right)^m\right)
\end{equation*}
which is
\begin{equation}
A_m=A_0\left(1+\frac{3}{5}\left(1-\left(\frac{4}{9}\right)^m\right)\right)=\frac{A_0}{5}\left(8-3\left(\frac{4}{9}\right)^m\right)\text{.}
\end{equation}
Taking the limit gives (\ref{eq:sfarea}).
\end{proof}

\clearpage

\begin{lemma}
The box-counting dimension of the classic snowflake is $\frac{\log{4}}{\log{3}}$.
\end{lemma}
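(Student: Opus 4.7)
The statement concerns the box-counting dimension of the snowflake's boundary curve (the interior of the domain is two-dimensional, so the interesting quantity is the dimension of the Koch boundary). The plan is to apply the standard self-similar/Moran formula for the dimension of an IFS attractor satisfying the open set condition, using the IFS $\{f_1,f_2,f_3,f_4\}$ already written down in the previous subsection.

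First, I would note that each of the three sides of the Koch snowflake is the attractor of the IFS $\{f_1,f_2,f_3,f_4\}$, whose maps are similarities of ratio $1/3$ (this is clear from the matrix parts: $f_1,f_4$ are scalings by $1/3$, while $f_2,f_3$ are rotations composed with scaling by $1/3$, since the $2\times 2$ matrices have determinant $1/9$ and equal columns in norm $1/3$). Verifying the open set condition is straightforward: the open triangle whose base is the segment from $(0,0)$ to $(1,0)$ together with the bump above it gives an open set $U$ with $f_i(U)\subset U$ and $f_i(U)\cap f_j(U)=\varnothing$ for $i\neq j$.

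Next, I would invoke Moran's theorem: for an IFS of similarities with ratios $r_1,\ldots,r_N$ satisfying the open set condition, the box-counting (and Hausdorff) dimension of the attractor is the unique $d$ with $\sum r_i^d=1$. With $N=4$ and $r_i=1/3$, this gives $4\cdot 3^{-d}=1$, hence $d=\log 4/\log 3$. Since the snowflake boundary is the union of three congruent copies of this attractor and box-counting dimension is stable under finite unions, the boundary has the same dimension.

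Alternatively, and perhaps more in the spirit of a self-contained argument, I would do the box-counting by hand. At construction level $m$, one Koch side is a union of $4^m$ congruent pieces, each of diameter $3^{-m}$. Taking $\varepsilon_m = 3^{-m}$, one needs on the order of $4^m$ boxes of side $\varepsilon_m$ to cover the side, while any cover uses at least a constant multiple of $4^m$ boxes (using the open set condition to separate the pieces). Therefore
\begin{equation*}
\dim_B = \lim_{m\to\infty}\frac{\log 4^m}{\log(1/3^{-m})}=\frac{\log 4}{\log 3}.
\end{equation*}
The main technical step, and the only one that requires real care, is the open set condition / lower bound on the number of boxes needed; everything else is bookkeeping that falls out directly from the level-by-level construction already described for the area computation.
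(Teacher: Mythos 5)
Your proposal is correct, and in fact contains the paper's argument as your ``alternative'' route: the paper's proof is exactly the hands-on box count along the geometric sequence $r_n=(1/3)^n$, claiming the explicit count $N(r_n)=3\cdot 4^{n-1}$ (three boxes of side $1/3$ per shrunken copy, times $4^{n-1}$ copies) and then evaluating $\lim_n \log N(r_n)/\log(1/r_n)=\log 4/\log 3$. Your primary route via Moran's theorem for the IFS $\{f_1,\dots,f_4\}$ with common ratio $1/3$ and the open set condition is a genuinely different and more structural argument; amusingly, it is precisely the method the paper itself uses one lemma later for the quadratic snowflake, where it writes down $2a^d+3b^d=1$ directly from the self-similar structure. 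The Moran route buys you rigor for free (the open set condition handles the lower bound on the box count, and you get the Hausdorff dimension as a bonus), whereas the paper's count is slightly informal on exactly the point you flag: it asserts the box counts at each scale without justifying that no cover can do better, and neither the paper nor your sketch spells out that it suffices to take the limit along the subsequence $\varepsilon_m=3^{-m}$ (standard, since consecutive ratios are bounded). Your remark that box-counting dimension is stable under finite unions, so the three Koch sides give the same dimension for the whole boundary, is also a point the paper passes over silently. No gaps; if anything your version is the more careful one.
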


\begin{proof}
We just consider the Koch curve, so one third of the boundary of the snowflake with side length $1$. If we choose boxes with side length $\frac{1}{3}$, we need $3$ boxes to cover the curve. If we choose boxes with side length $\frac{1}{9}$, then we need $4\cdot 3=12$, since we have to cover 4 identical copies shrank to $\frac{1}{3}$, and so on (Figure \ref{fig:sfbcd}). Thus the number of boxes with side length $r_n=(\frac{1}{3})^n$ necessary to cover is equal to $N\left(r_n\right)=3\cdot 4^{n-1}$. From this we can compute the box-counting dimension exactly:
\begin{align*}
d_b&=\lim_{n\rightarrow \infty}\frac{\log{N(r_n)}}{\log\frac{1}{r_n}}\\
&=\lim_{n\rightarrow \infty}\frac{\log{N\left(\left(\frac{1}{3}\right)^n\right)}}{\log\frac{1}{\left(\frac{1}{3}\right)^n}}\\
&=\lim_{n\rightarrow \infty}\frac{\log\left({3\cdot 4^{n-1}}\right)}{\log 3^n}\\
&=\lim_{n\rightarrow \infty}\frac{(n-1)\log{4}+\log{3}}{n\cdot\log{3}}\\
&=\lim_{n\rightarrow \infty}\frac{n\log{4}-\log{4}+\log{3}}{n\cdot\log{3}}\\
&=\lim_{n\rightarrow \infty}\left(\frac{n\log{4}}{n\log{3}}+\frac{log{3}-\log{4}}{n\log{3}}\right)\\
&=\frac{\log{4}}{\log{3}}
\end{align*}
\end{proof}

\begin{figure}[h]
    \centering
    \subfigure[Covering with boxes of length $\frac{1}{3}$]
    {
        \includegraphics[width=2.2in]{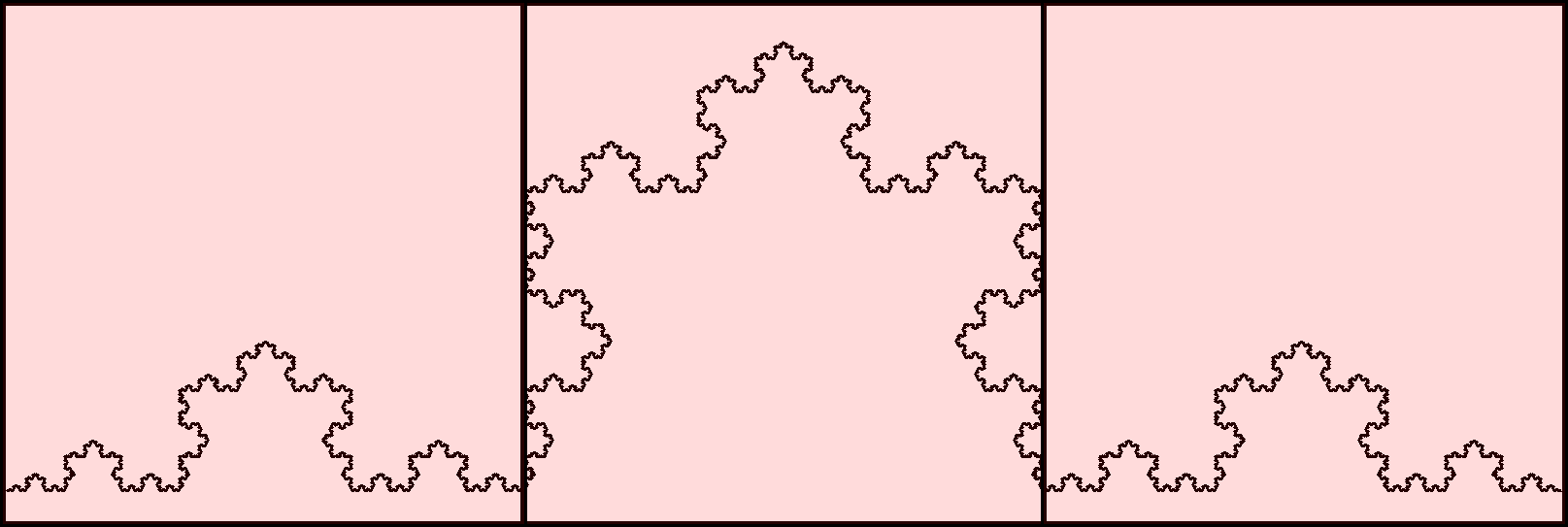}
    }
    \hfill
    \subfigure[Covering with boxes of length $\frac{1}{9}$]
    {
        \includegraphics[width=2.2in]{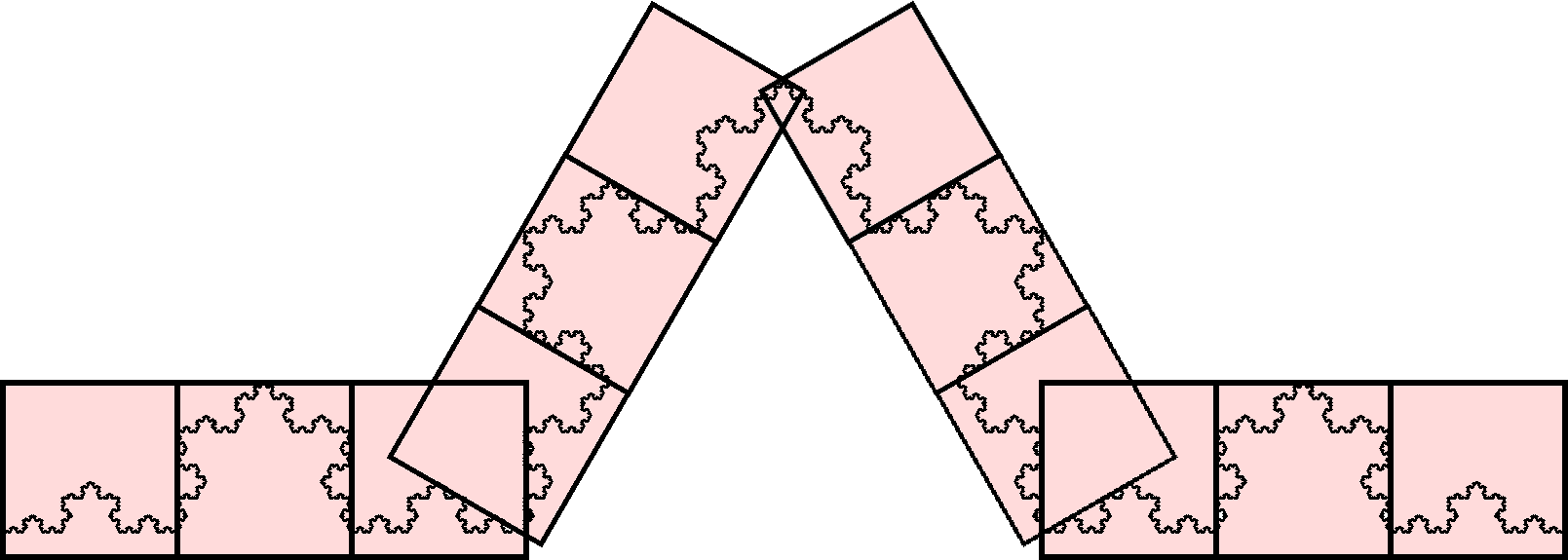}
    }
    \caption{Covering the classic snowflake with boxes}
    \label{fig:sfbcd}
\end{figure}

For these iterated function system we were able to give the exact dimension. This will not be possible for the Julia sets, where we will compute an approximation by counting boxes of different sizes from a sequence $r_n$ that tends geometrically to zero. The idea is that the relationship between the number of boxes necessary to cover the curve and the size of the boxes should be maintained over a broad range of values. We fit a line into a log-log plot of the number of boxes to the size of each box and take the slope. We check the method on the Classic Snowflake on Level $6$ and the Quadratic Snowflake ($b=0.2$, Level $8$) and computed dimensions $d\approx1.231$ and $d\approx1.280$ respectively, shown in Fig. \ref{fig:loglog1}).

Keep in mind that for Julia sets, the computation of the box counting dimension is more problematic and that the limit does not necessarly converge. For a more elaborate approach see \cite{S}, but for our purposes a simple counting method suffices.

\begin{figure}[h]
    \centering
    \subfigure[For the Classic Snowflake on Level 6: $d\approx1.231$]
    {
        \includegraphics[width=2.3in]{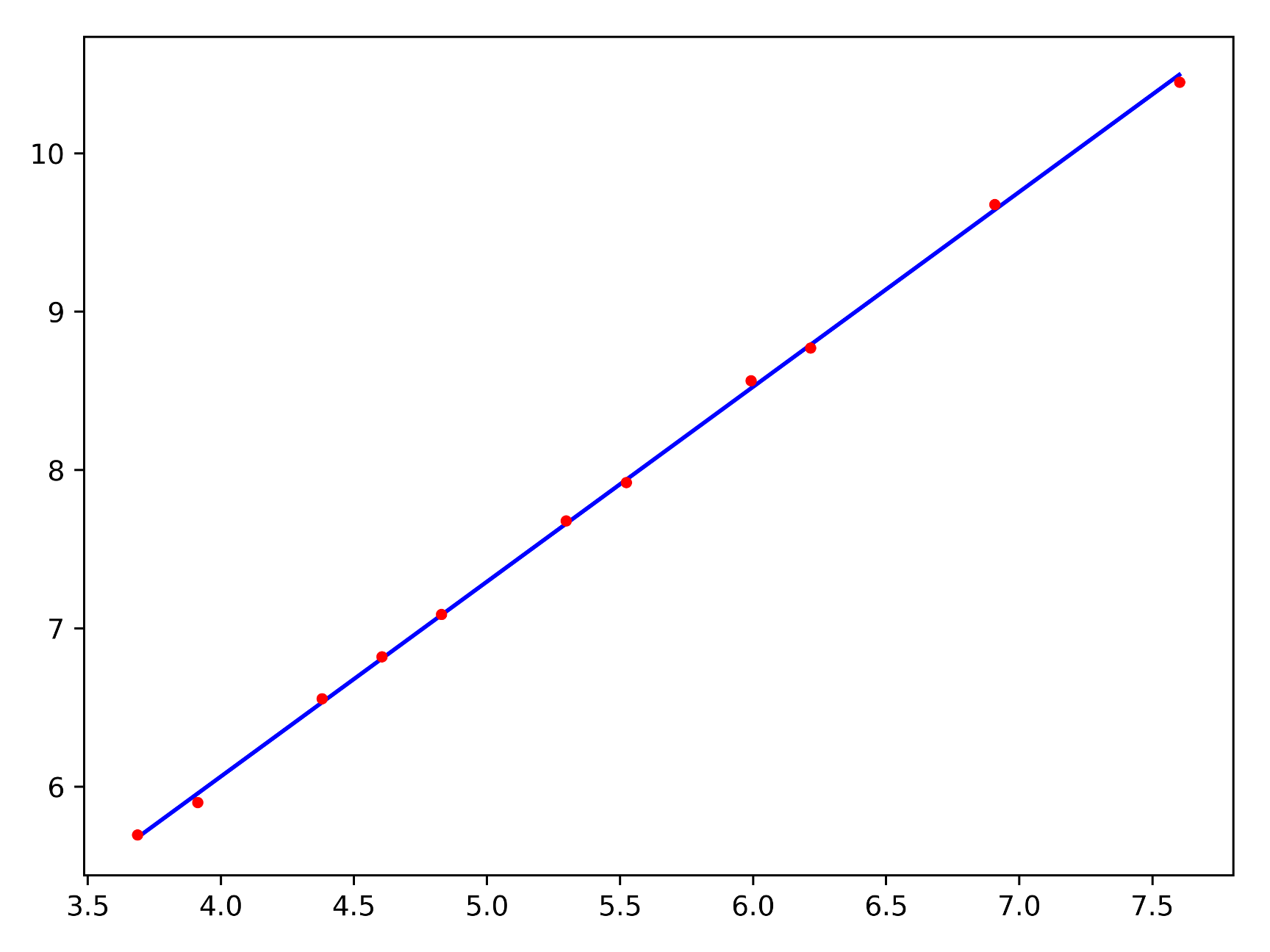}
    }
    \hfill
    \subfigure[For the Quadratic Snowflake ($b=0.2$) on Level 6: $d\approx1.280$]
    {
        \includegraphics[width=2.3in]{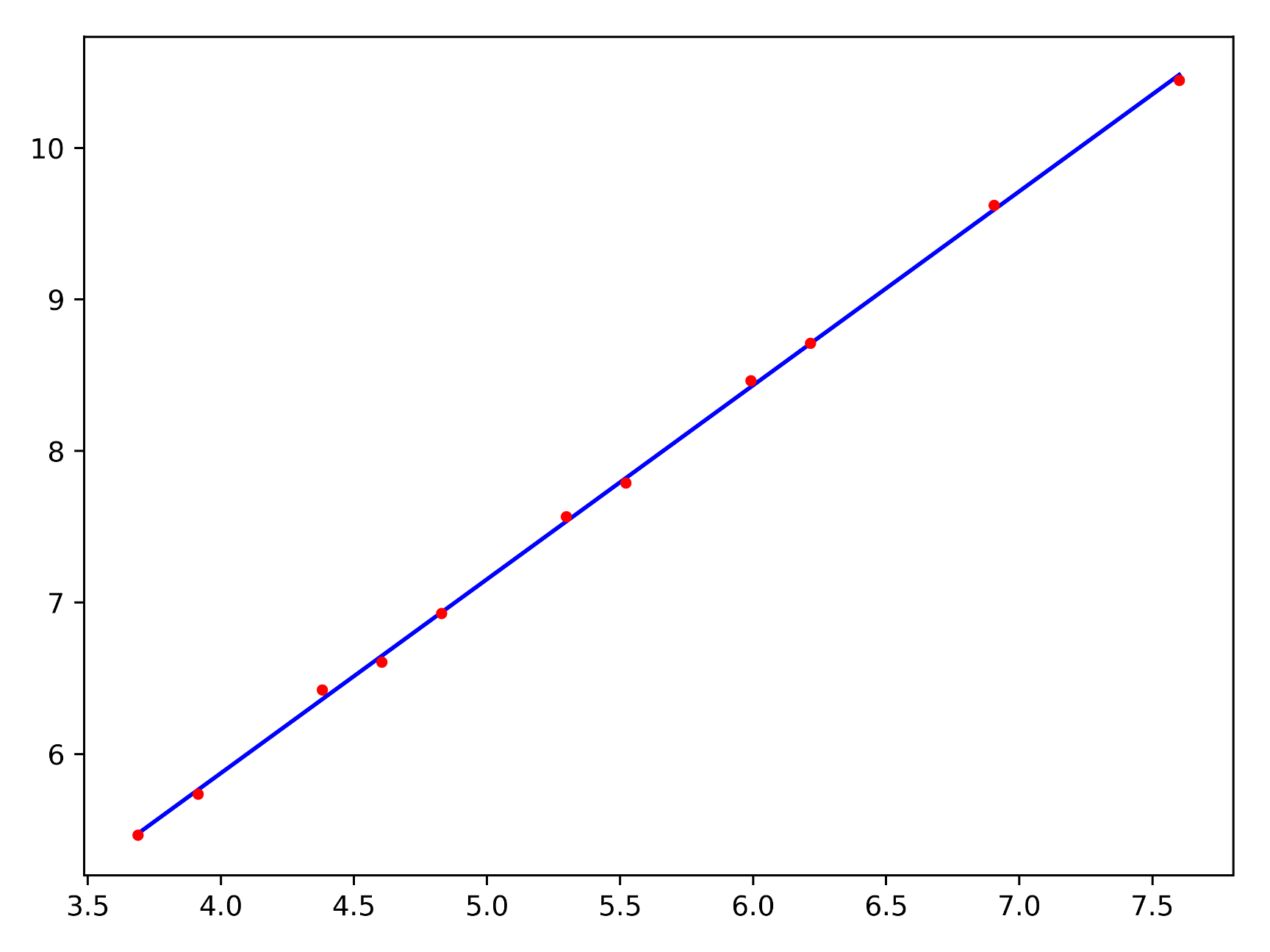}
    }
    \caption{Log-Log plot for the box-counting dimension}
    \label{fig:loglog1}
\end{figure}

\subsection{Quadratic Snowflake}
We choose the side length of the level 1 square to be $1$. We construct the domain by gluing together three curves, each generated by the IFS
\begin{align*}
f_1(x)&=\left(
\begin{matrix}
a & 0 \\
0 & a
\end{matrix}\right)
x\\
f_2(x)&=\left(
\begin{matrix}
0 & -b \\
b & 0
\end{matrix}\right)
x+\left(
\begin{matrix}
a \\ 0
\end{matrix}\right)\\
f_3(x)&=\left(
\begin{matrix}
b & 0 \\
0 & b
\end{matrix}\right)
x+\left(
\begin{matrix}
a \\ b
\end{matrix}\right)\\
f_4(x)&=\left(
\begin{matrix}
0 & b \\
-b & 0
\end{matrix}\right)
x+\left(
\begin{matrix}
a+b \\ b
\end{matrix}\right)\\
f_5(x)&=\left(
\begin{matrix}
a & 0 \\
0 & a
\end{matrix}\right)
x+\left(
\begin{matrix}
a+b \\ 0
\end{matrix}\right)
\end{align*}

and shown in Fig. \ref{fig:qsnowflakes}.

\begin{figure}[h]
    \centering
    \subfigure[Level 1]
    {
        \includegraphics[width=1in]{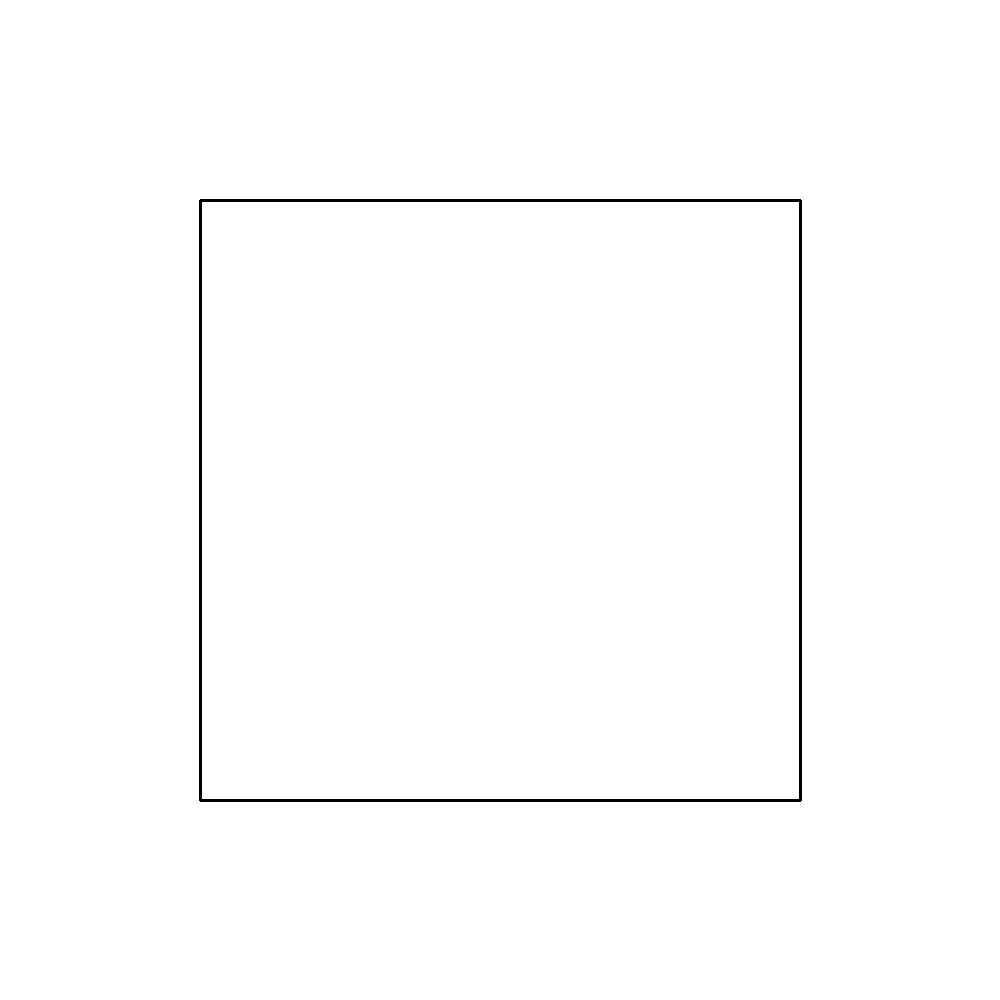}
    }
    \hfill
    \subfigure[Level 2]
    {
        \includegraphics[width=1in]{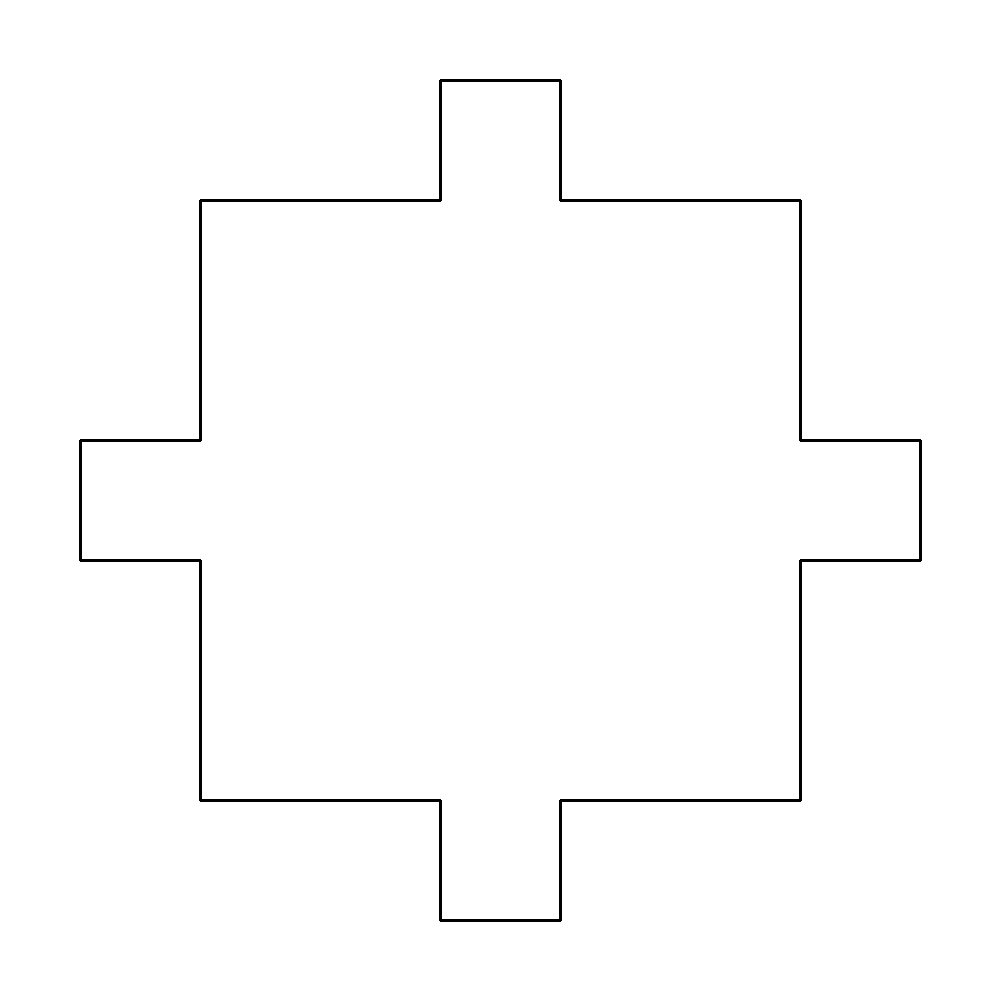}
    }
    \hfill
    \subfigure[Level 3]
    {
        \includegraphics[width=1in]{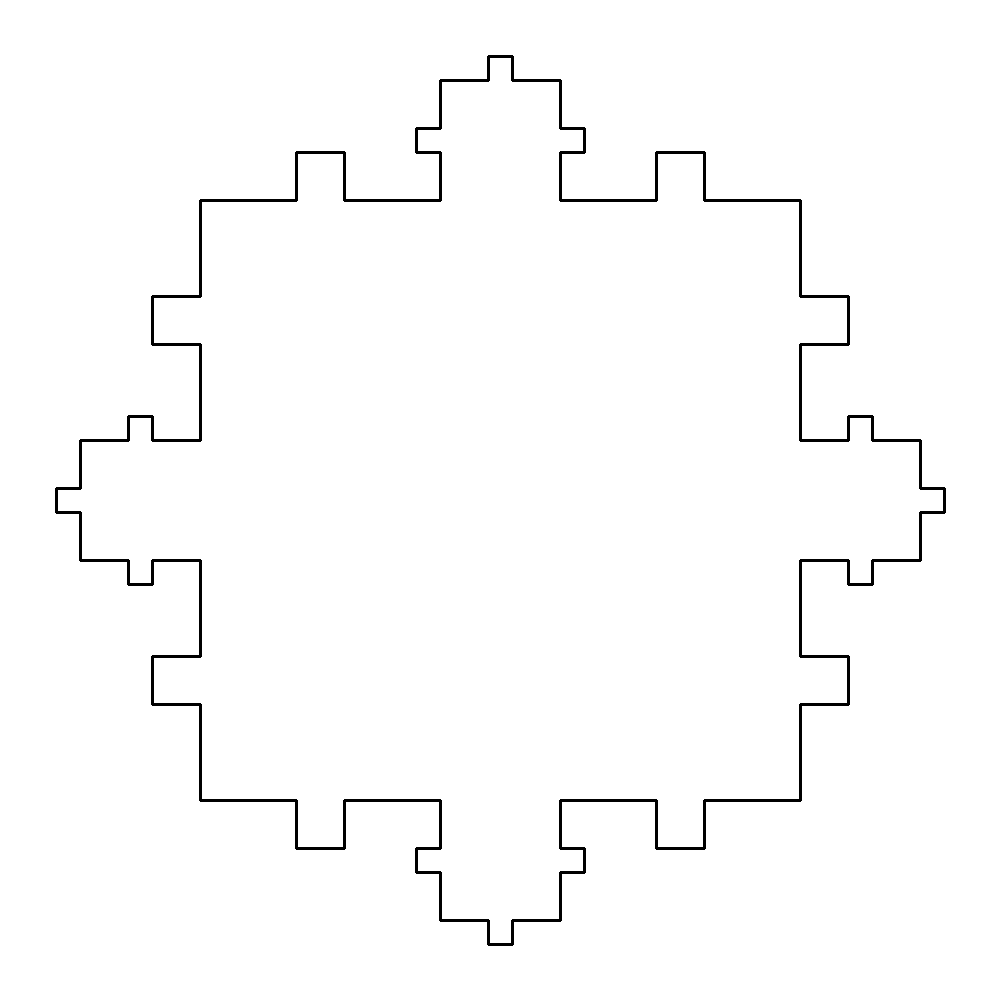}
    }
    \hfill
    \subfigure[Level 4]
    {
        \includegraphics[width=1in]{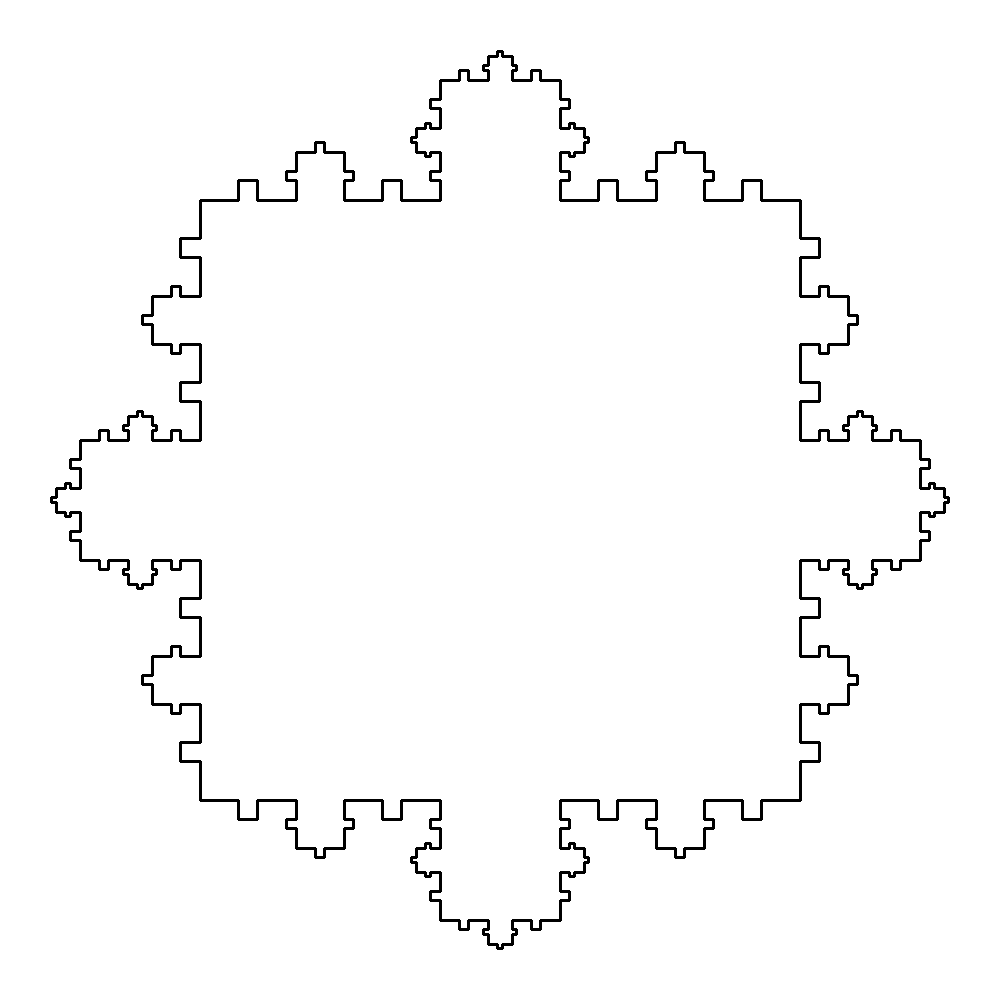}
    }
    \hfill
    \caption{The Quadratic Snowflake ($b=0.2$)}
    \label{fig:qsnowflakes}
\end{figure}

\begin{lemma}
The limit of the area of the quadratic snowflake is
\begin{equation}
\label{eq:qsfarea}
A=(2a+b)^2+\frac{4b^2}{1-2a^2-3b^2}\text{.}
\end{equation}
This gives $A\approx1.07080$ for $b=0.1$, $A\approx1.28571$ for $b=0.2$, and $A\approx1.20711$ for the lattice case where $b=3-2\sqrt{2}$ (then $a^2=b)$.
\end{lemma}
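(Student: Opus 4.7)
The plan is to follow the Koch snowflake computation above: track the area recursively, identify the contraction factor for squared side-lengths under refinement, and sum a geometric series. Throughout I assume the implicit consistency condition $2a+b=1$, which is what makes the IFS attractor a curve from $(0,0)$ to $(1,0)$ and the level~$1$ square have area $(2a+b)^2 = 1$.

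First I would read off the substitution rule from $f_1,\ldots,f_5$: applying the (appropriately rescaled) IFS to a boundary segment of length $\ell$ replaces it with a polyline of five sub-segments of lengths $a\ell,b\ell,b\ell,b\ell,a\ell$ that joins the original endpoints through a rectangular ``bump'' of side $b\ell$ protruding outward. The area enclosed between the old segment and the new polyline is a single rectangle of area $(b\ell)^2 = b^2\ell^2$.

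Second I would introduce the bookkeeping quantity $Q_n = \sum_j \ell_j^2$, summed over all boundary segments at level~$n$. Since each segment of length $\ell$ is replaced by five segments whose squared lengths sum to $(2a^2+3b^2)\ell^2$, the recursion
\begin{equation*}
Q_{n+1} = (2a^2+3b^2)\,Q_n, \qquad Q_1 = 4
\end{equation*}
gives $Q_n = 4(2a^2+3b^2)^{n-1}$. The area added when passing from level~$n$ to level~$n+1$ is $b^2 Q_n$, each segment contributing an outward bump of area $b^2\ell^2$ and these bumps being pairwise disjoint. Summing the resulting geometric series,
\begin{equation*}
A = 1 + 4b^2 \sum_{n=0}^\infty (2a^2+3b^2)^n = (2a+b)^2 + \frac{4b^2}{1-2a^2-3b^2},
\end{equation*}
which is the claimed formula. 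The three stated numerical values drop out by substituting $b=0.1$, $b=0.2$, and $b=3-2\sqrt{2}$ (with $a=(1-b)/2$, the last also satisfying $a^2=b$).

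The main obstacle is verifying the geometric hypotheses that license this areal accounting: one needs $2a^2+3b^2<1$ so the series converges (easily checked in the stated parameter ranges), the outward bumps at each level must be pairwise disjoint and lie strictly outside the current polygonal approximation so that areas genuinely add, and the increasing sequence of polygonal regions must converge in area to the limit snowflake. Modulo these standard geometric checks, the result reduces to the evaluation of one geometric series.
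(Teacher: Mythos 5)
Your proof is correct and takes essentially the same route as the paper: both compute the area added at each level as $b^2$ times the sum of squared boundary-segment lengths and sum the resulting geometric series with ratio $2a^2+3b^2$. Your recursion $Q_{n+1}=(2a^2+3b^2)Q_n$ is just the collapsed form of the paper's explicit binomial double sum $\sum_k\binom{j-1}{k}(2a^2)^k(3b^2)^{j-1-k}=(2a^2+3b^2)^{j-1}$, so the two arguments differ only in bookkeeping, with yours being slightly cleaner.
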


\begin{proof}
We start at a square with side length $2a+b$. By inspection we see that the area added when going from level $m$ to $m+1$ is
\begin{equation*}
\sum_{j=1}^m \sum_{k=0}^{j-1}\binom{j-1}{k} 2^k 3^{(j-1)-k}\left(a^k b^{j-k}\right)^2
\end{equation*}
so the area at level $m$ is
\begin{align*}
A_m&=(2a+b)^2+4\sum_{j=1}^m \sum_{k=0}^{j-1}\binom{j-1}{k} 2^k 3^{(j-1)-k}\left(a^k b^{j-k}\right)^2\\
&=(2a+b)^2+4b^2\sum_{j=1}^m \sum_{k=0}^{j-1}\binom{j-1}{k} \left(2a^2\right)^k\left(3b^2\right)^{(j-1)-k}\\
&=(2a+b)^2+4b^2\sum_{j=0}^{m-1} \left(2a^2+3b^2\right)^j\text{.}
\end{align*}
Taking the limit gives (\ref{eq:qsfarea}).
\end{proof}

\begin{lemma}
The box-counting dimension $d$ of the quadratic snowflake satisfies
\begin{equation}
\label{eq:qbcd}
2\left(\frac{1-b}{2}\right)^{d}+3b^{d}=1\text{.}
\end{equation}
This gives $d\approx1.15965$ for $b=0.1$, $d\approx1.2811$ for $b=0.2$, and $d\approx1.2465$ for the lattice case where $b=3-2\sqrt{2}$.
\end{lemma}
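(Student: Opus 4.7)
The plan is to apply Moran's theorem. The quadratic snowflake boundary (one of its three sides) is the attractor of the five-map IFS $\{f_1,\dots,f_5\}$ written out just before the lemma. Reading the linear parts: $f_1$ and $f_5$ are pure dilations by $a$, while $f_2,f_3,f_4$ are compositions of a rotation (by $+90^\circ$, $0^\circ$, $-90^\circ$ respectively) with a dilation by $b$. So the contraction ratios are $(a,b,b,b,a)$, with two similarities of ratio $a$ and three of ratio $b$.

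Next I would fix the relationship between $a$ and $b$. Chasing the endpoints of the IFS as in the previous lemma's construction, one checks that $f_1$ maps $(0,0)\to(0,0)$, the compositions stitch together so that $f_i$ picks up where $f_{i-1}$ left off, and $f_5$ maps $(1,0)\to(2a+b,0)$. Since the side length is $1$ by hypothesis, this forces
\begin{equation*}
2a+b=1, \qquad \text{i.e.} \qquad a=\tfrac{1-b}{2}.
\end{equation*}

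Then I would verify the open set condition. Taking $U$ to be a small open neighborhood of the five subcurves (for instance, the interior of the convex hull of the attractor, or a thin parallelogram around the curve), inspection of the placements $\bigl(0,0\bigr),(a,0),(a,b),(a+b,b),(a+b,0)$ shows that the five images $f_i(U)$ meet at most at single junction points and not on open sets (this is visible in Figure \ref{fig:qsnowflakes}, and one checks algebraically that consecutive images share exactly one endpoint). With OSC in hand, Moran's theorem gives that the similarity dimension equals the Hausdorff dimension, which in turn coincides with the box-counting dimension for self-similar attractors satisfying OSC. Thus $d$ is the unique positive solution of
\begin{equation*}
\sum_{i=1}^{5} r_i^{\,d}=2a^{d}+3b^{d}=1,
\end{equation*}
and substituting $a=(1-b)/2$ yields (\ref{eq:qbcd}). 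The numerical values for $b=0.1,\,0.2,\,3-2\sqrt{2}$ follow by numerically solving this transcendental equation.

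The only mildly delicate step is the verification of the open set condition, since the generators include non-orientation-preserving rotations and the tiles abut each other; but because adjacent images only share endpoints, any standard choice of $U$ (e.g.\ a thickened version of the bounded region on one side of the curve) works, and the rest is a routine invocation of Moran's theorem.
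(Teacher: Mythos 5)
Your proposal is correct and takes essentially the same route as the paper, which simply reads off the Moran equation $2a^{d}+3b^{d}=1$ from the self-similar decomposition into two copies of ratio $a$ and three of ratio $b$ and then substitutes $a=\tfrac{1-b}{2}$. Your version supplies the details the paper leaves implicit (the contraction ratios of the five IFS maps, the endpoint computation forcing $2a+b=1$, and the open set condition needed to invoke Moran's theorem), but the underlying argument is the same.
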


\begin{proof}
The box-counting dimension $d_b$ of the quadratic snowflake satisfies:
\begin{equation}
2a^{d}+3b^{d}=1
\end{equation}
since if we just look at the upper fourth of the curve, there are two copies each shrunk by $a$ next to the middle square with area $b^2$. This gives (\ref{eq:qbcd}).
\end{proof}

\section{Snowflake spectrum.}
In Figures \ref{fig:cd} -- \ref{fig:q3n} we show the graphs of the Dirichlet and Neumann counting function $N(t)$,
\begin{equation}
D_1(t) = N(t) - \frac{A}{4\pi}\text{,}
\end{equation}
and
\begin{equation}
D_2(t) = \frac{D_1(t)}{t^\beta}
\end{equation}
where $\beta$ is the box-counting dimension divided by two, for the classic snowflake and three different quadratic snowflakes with $a=0.45$, $a=0.4$ and $a=\sqrt{2}-1$ (lattice case).

\clearpage

\begin{figure}[h!]
    \centering
    \subfigure[$N(t)$ (blue) and $\frac{A}{4\pi}t$ (orange)]
    {
        \includegraphics[width=1.81in]{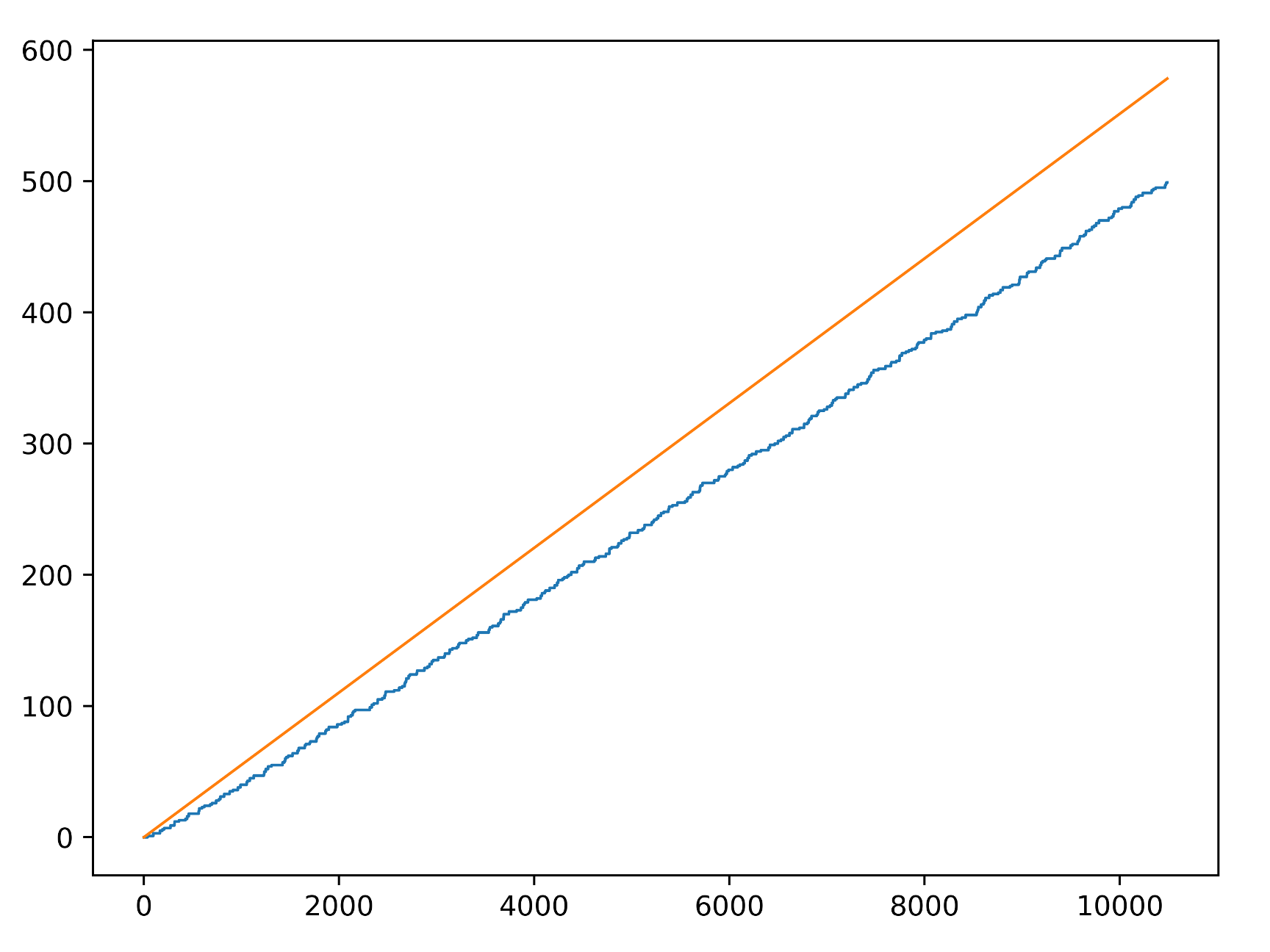}
    }
    \hfill
    \subfigure[$D_1(t)$]
    {
        \includegraphics[width=1.81in]{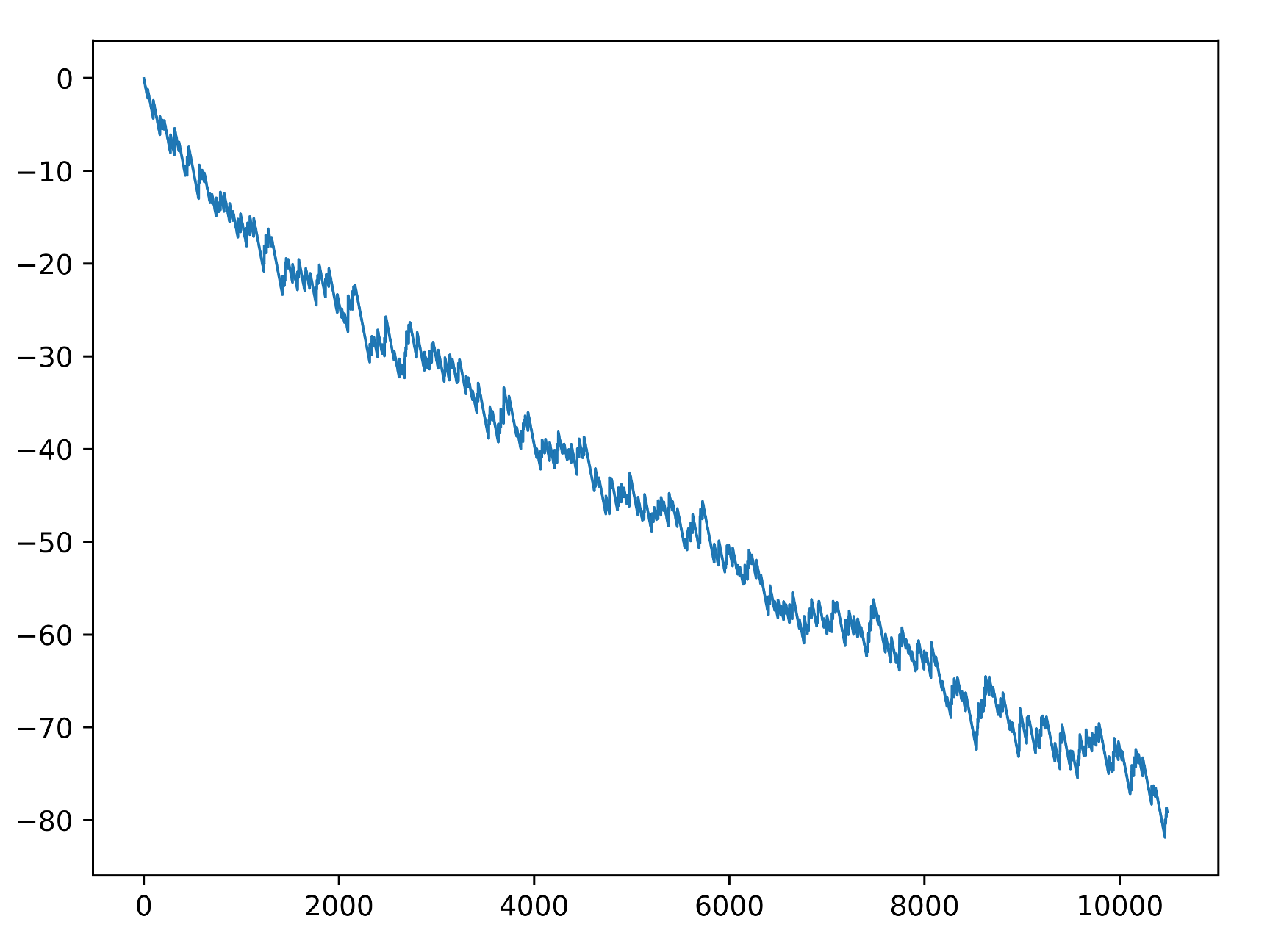}
    }
    \subfigure[$D_2(t)$]
    {
        \includegraphics[width=1.81in]{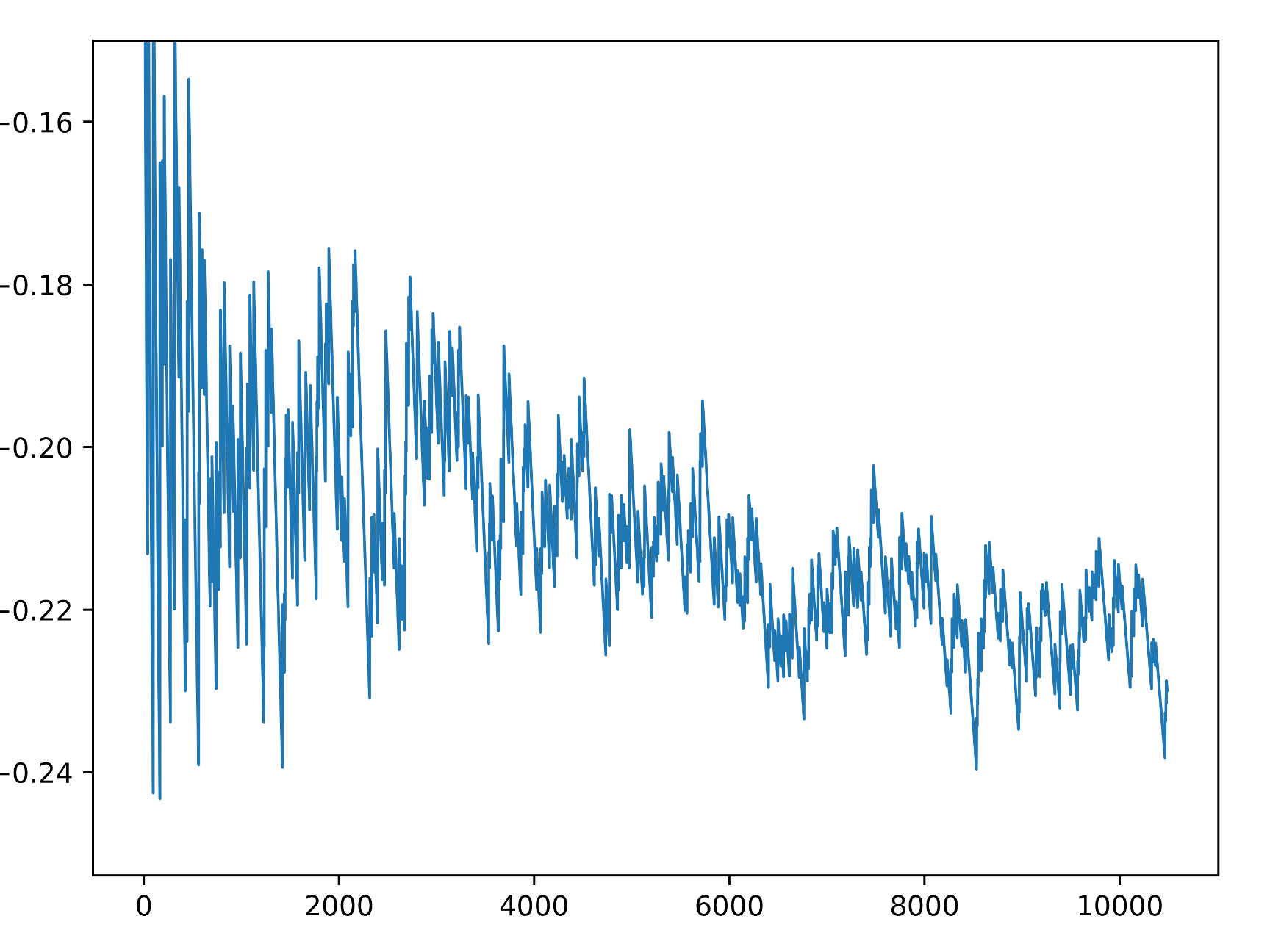}
    }
    \hfill
    \subfigure[$D_2(t)$ (zoom)]
    {
        \includegraphics[width=1.81in]{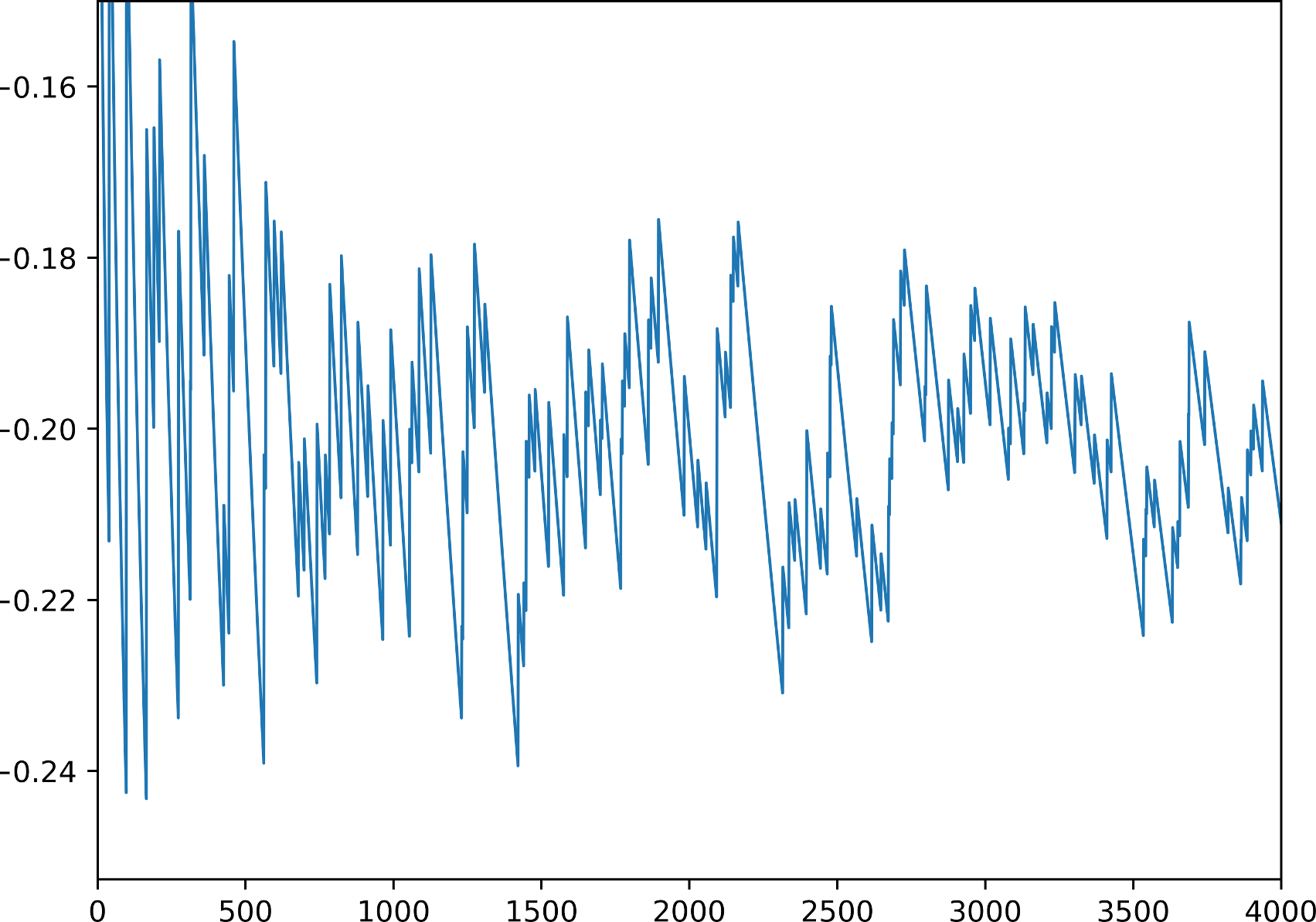}
    }
    \caption{Classic snowflake (Level 6) with Dirichlet BC}
    \label{fig:cd}
\end{figure}

\begin{figure}[h!]
    \centering
    \subfigure[$N(t)$ (blue) and $\frac{A}{4\pi}t$ (orange)]
    {
        \includegraphics[width=1.81in]{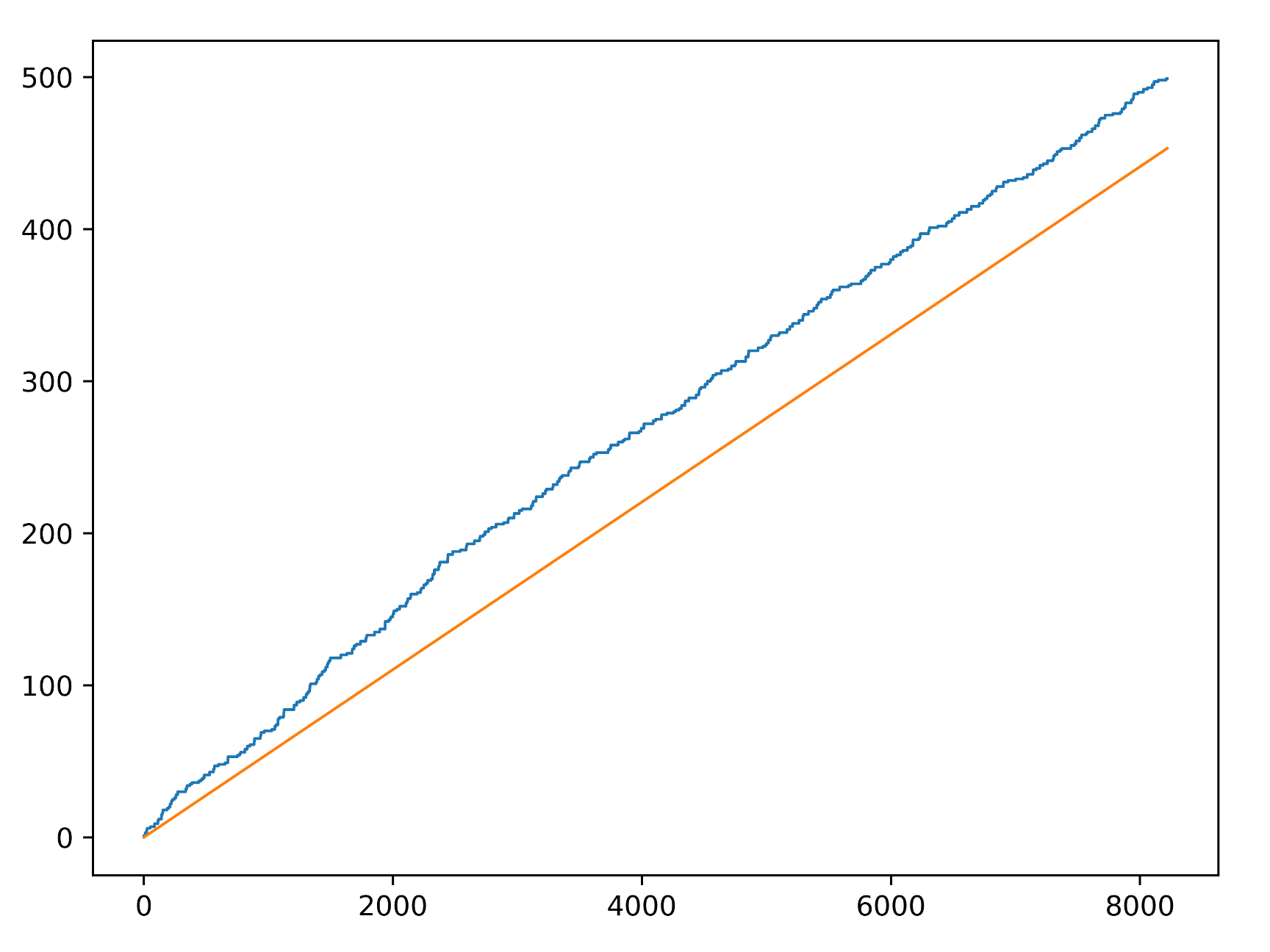}
    }
    \hfill
    \subfigure[$D_1(t)$]
    {
        \includegraphics[width=1.81in]{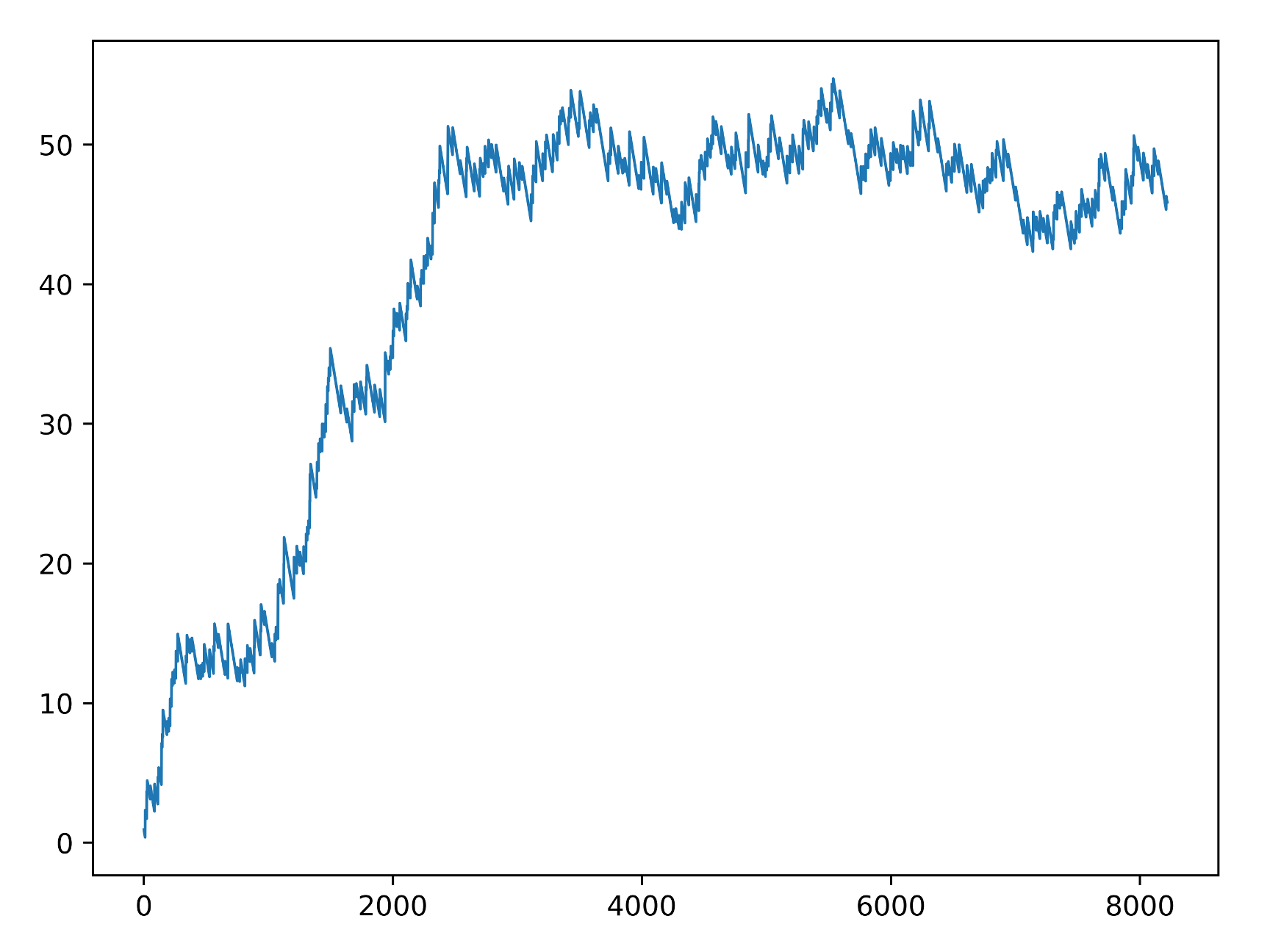}
    }
    \subfigure[$D_2(t)$]
    {
        \includegraphics[width=1.81in]{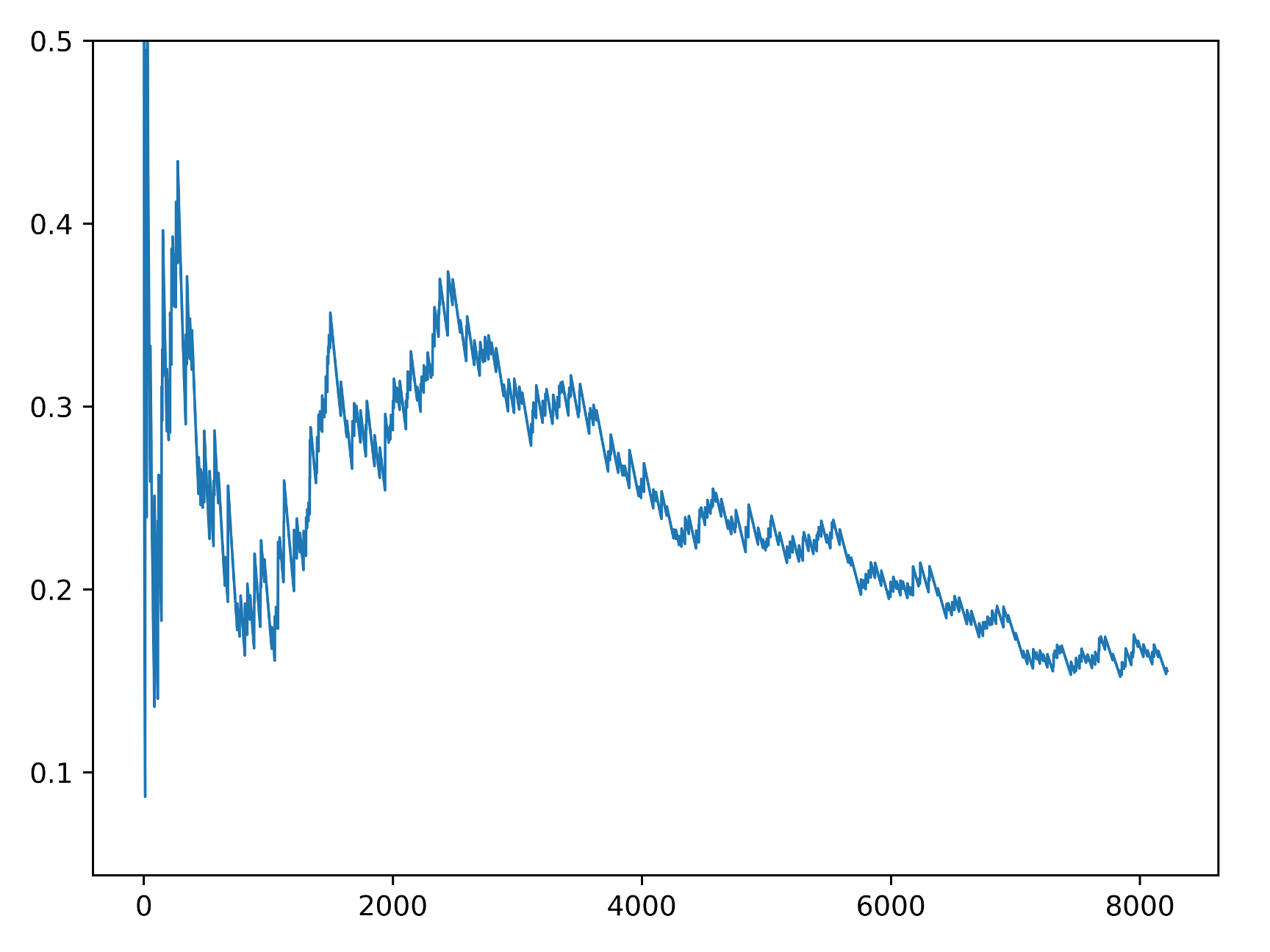}
    }
    \hfill
    \subfigure[$D_2(t)$ (zoom)]
    {
        \includegraphics[width=1.81in]{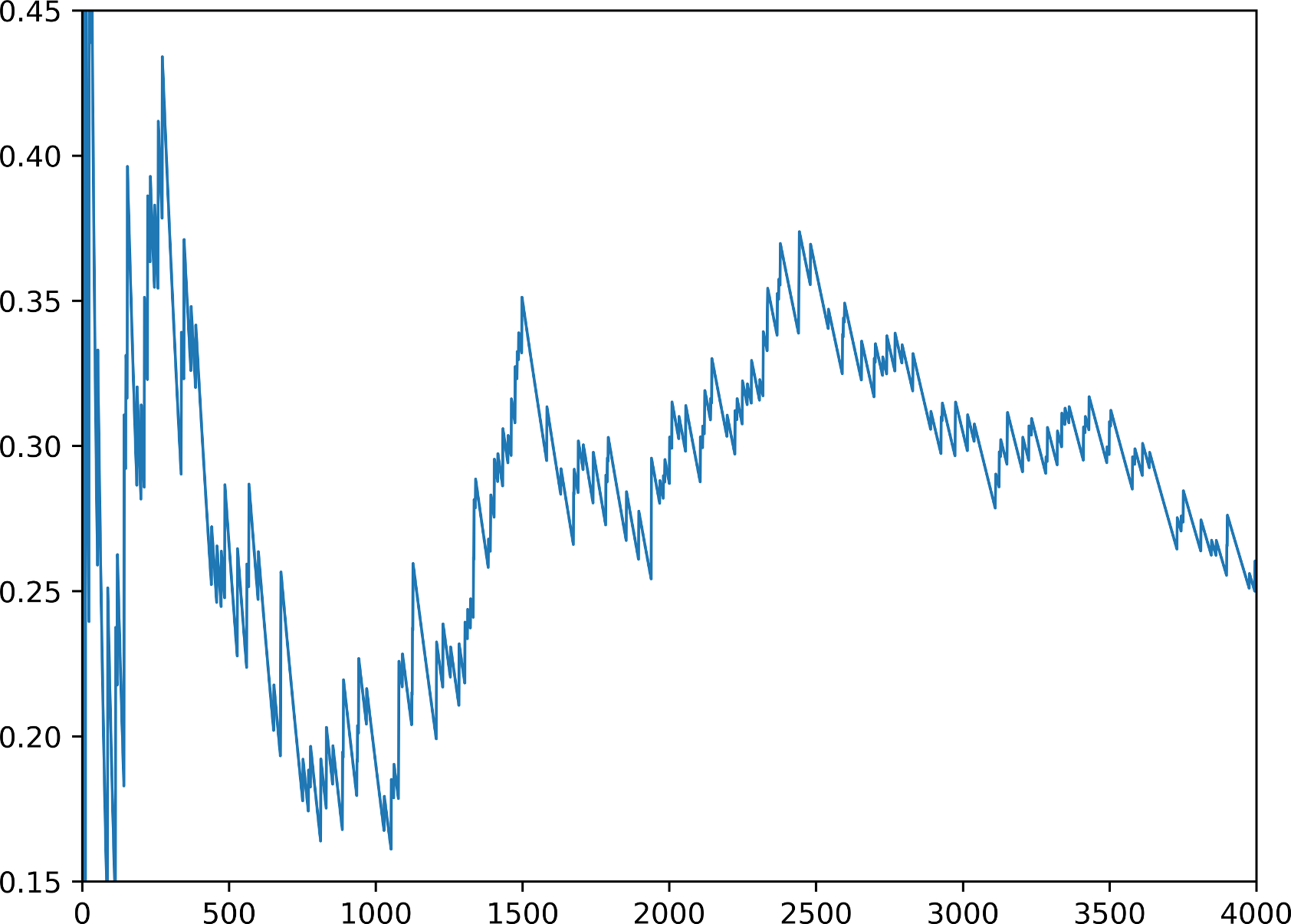}
    }
    \caption{Classic snowflake (Level 6) with Neumann BC}
\end{figure}

\clearpage

\begin{figure}[h!]
    \centering
    \subfigure[$N(t)$ (blue) and $\frac{A}{4\pi}t$ (orange)]
    {
        \includegraphics[width=1.81in]{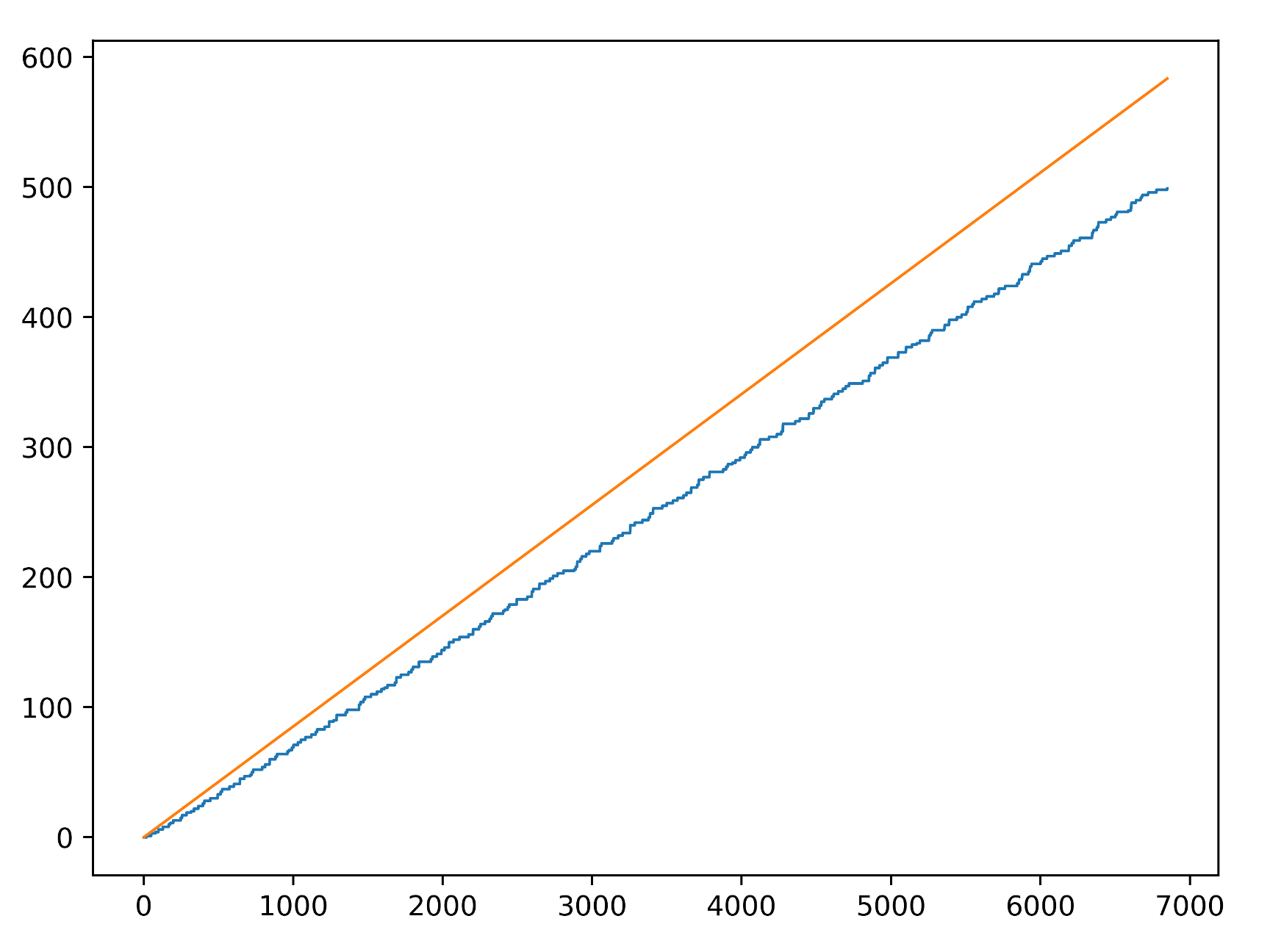}
    }
    \hfill
    \subfigure[$D_1(t)$]
    {
        \includegraphics[width=1.81in]{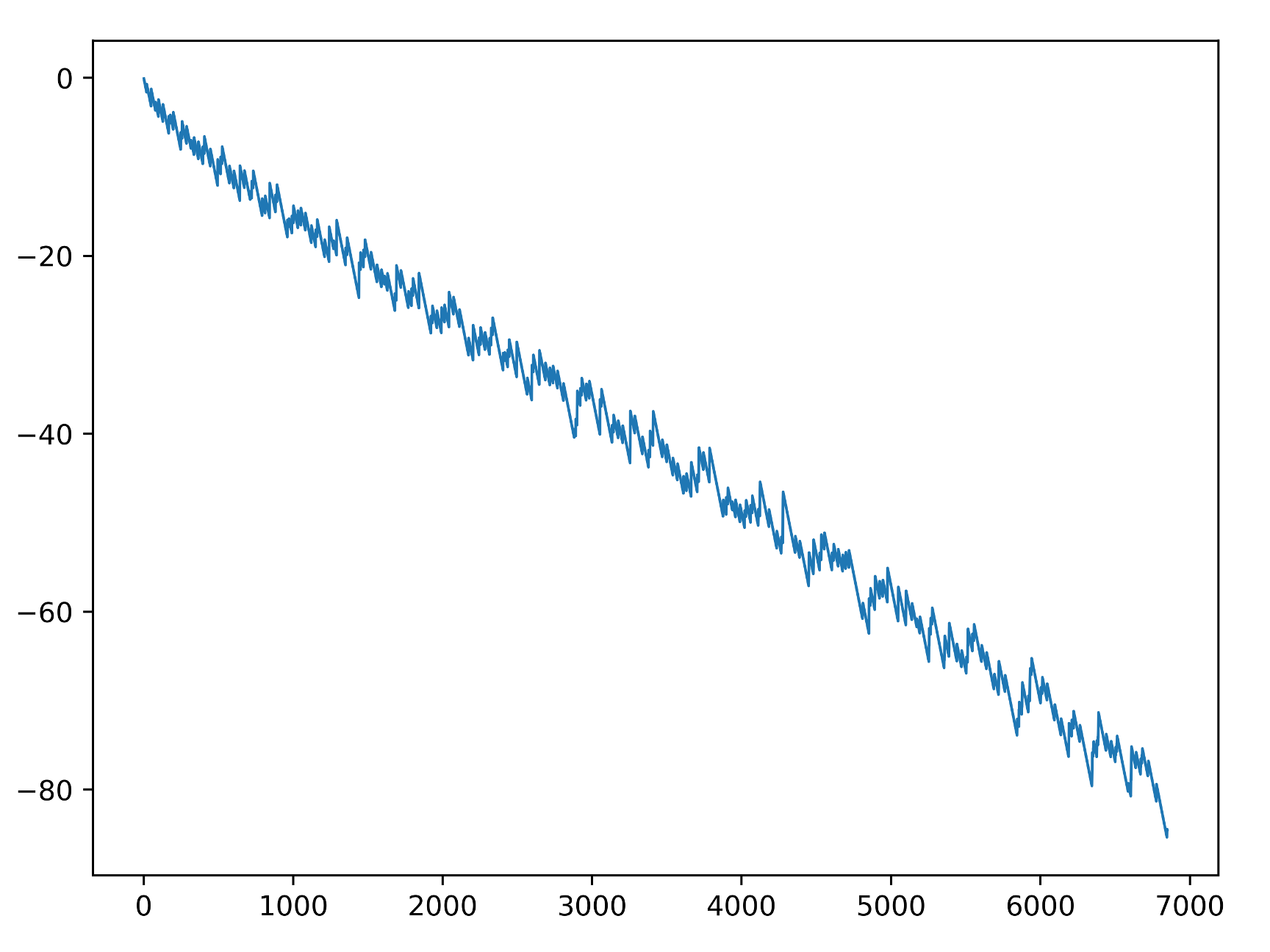}
    }
    \subfigure[$D_2(t)$]
    {
        \includegraphics[width=1.81in]{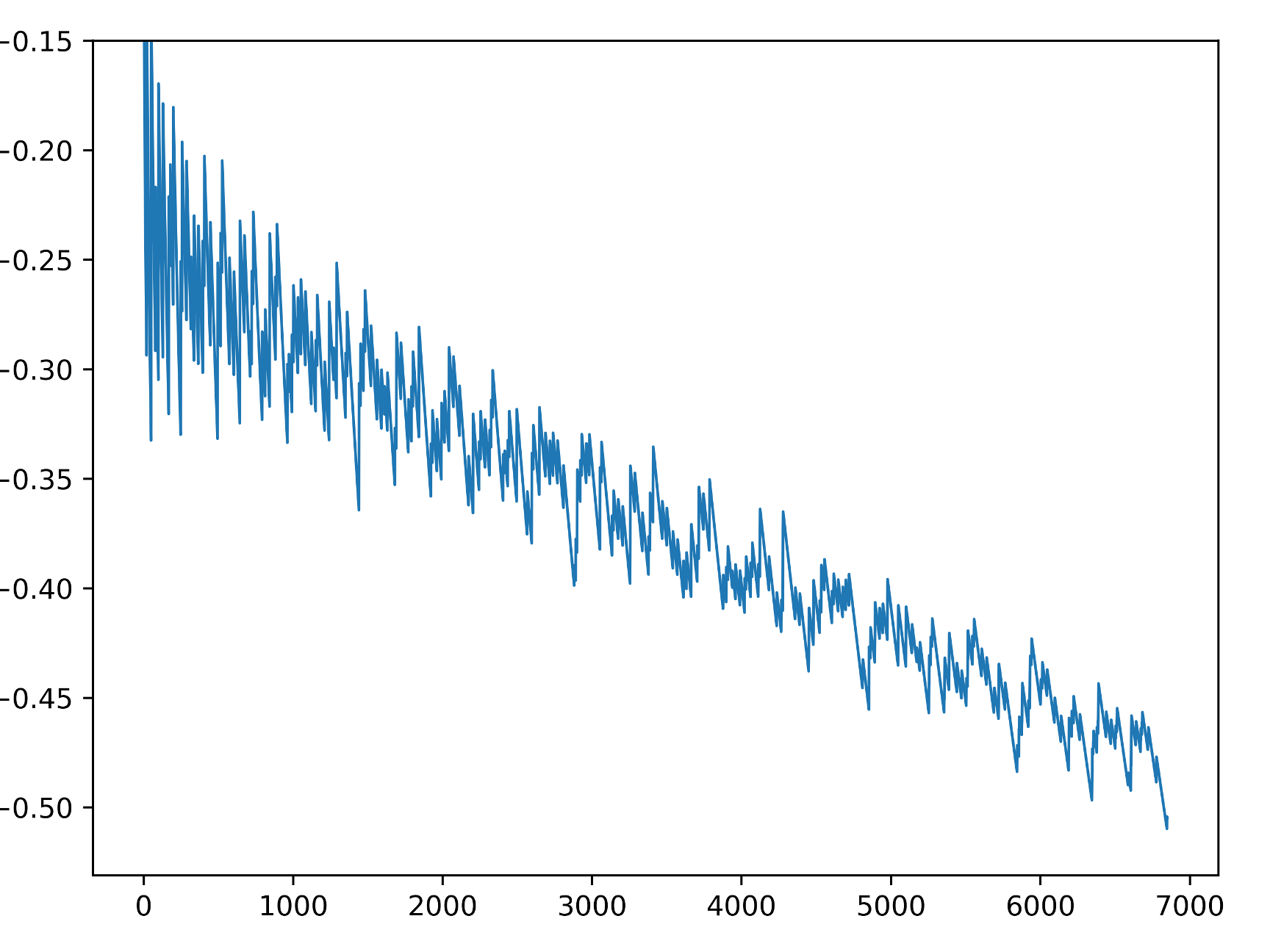}
    }
    \hfill
    \subfigure[$D_2(t)$ (zoom)]
    {
        \includegraphics[width=1.81in]{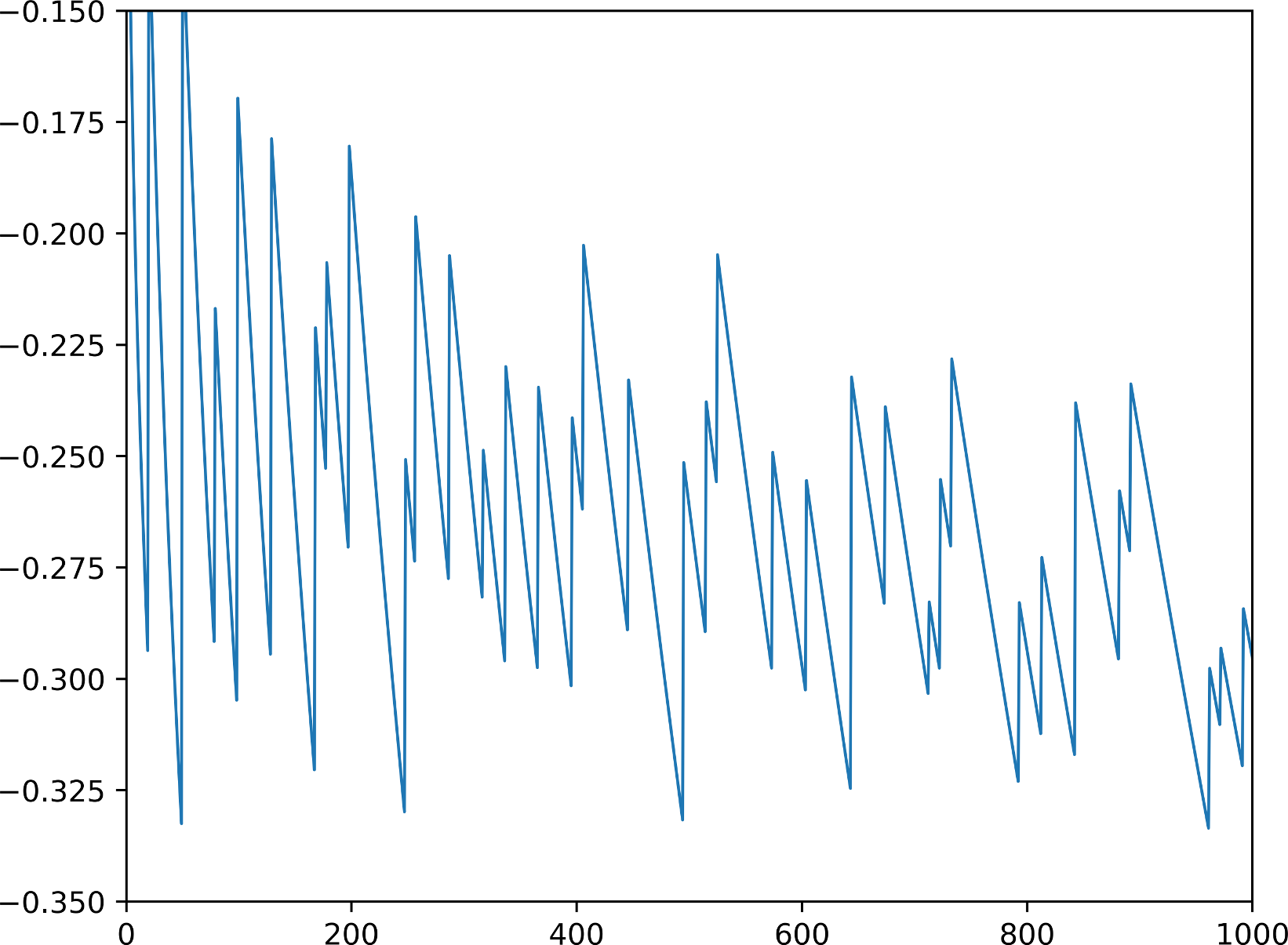}
    }
    \caption{Quadratic snowflake (Level 4, $a=0.45$) with Dirichlet BC}
\end{figure}

\begin{figure}[h!]
    \centering
    \subfigure[$N(t)$ (blue) and $\frac{A}{4\pi}t$ (orange)]
    {
        \includegraphics[width=1.81in]{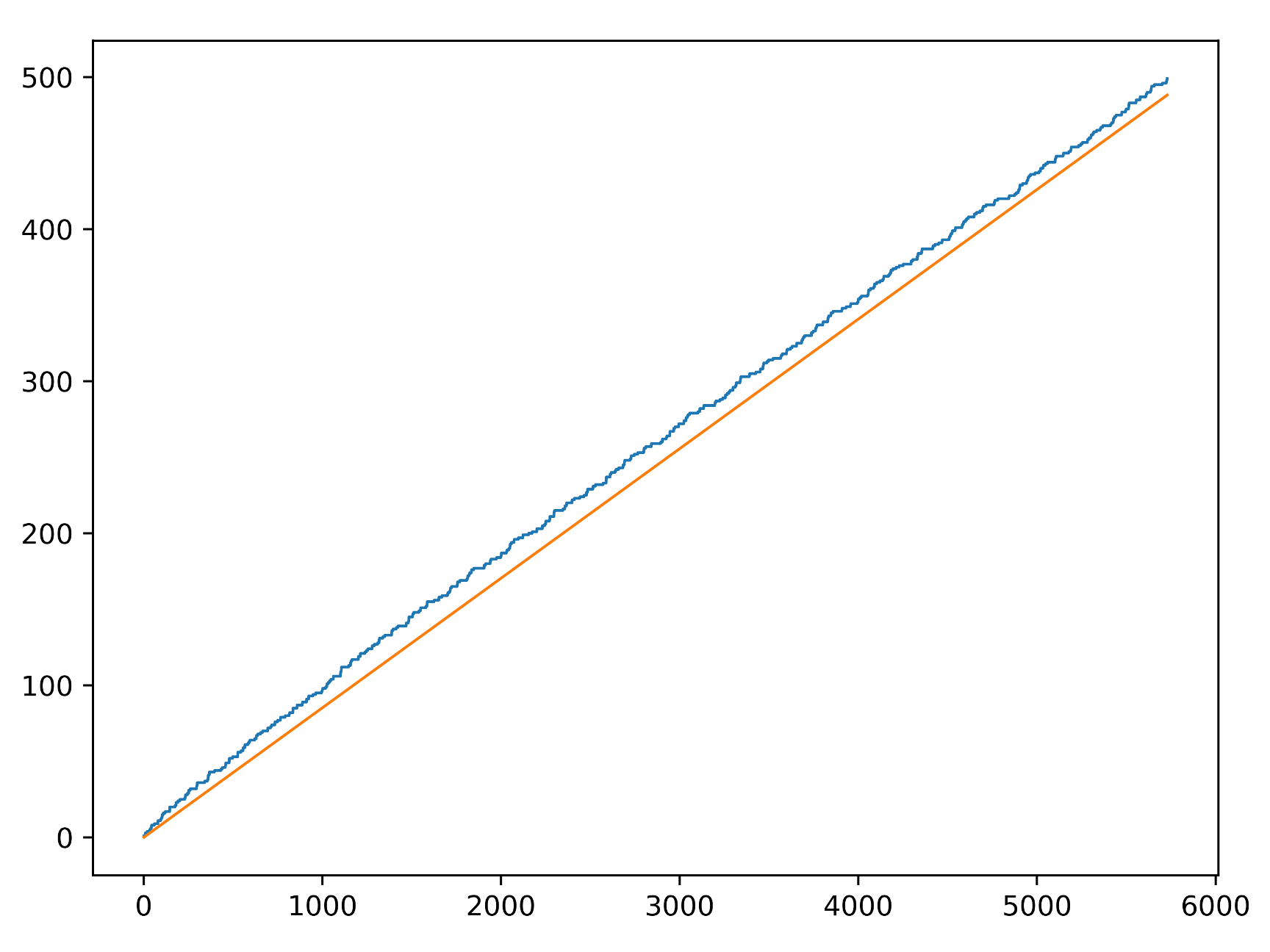}
    }
    \hfill
    \subfigure[$D_1(t)$]
    {
        \includegraphics[width=1.81in]{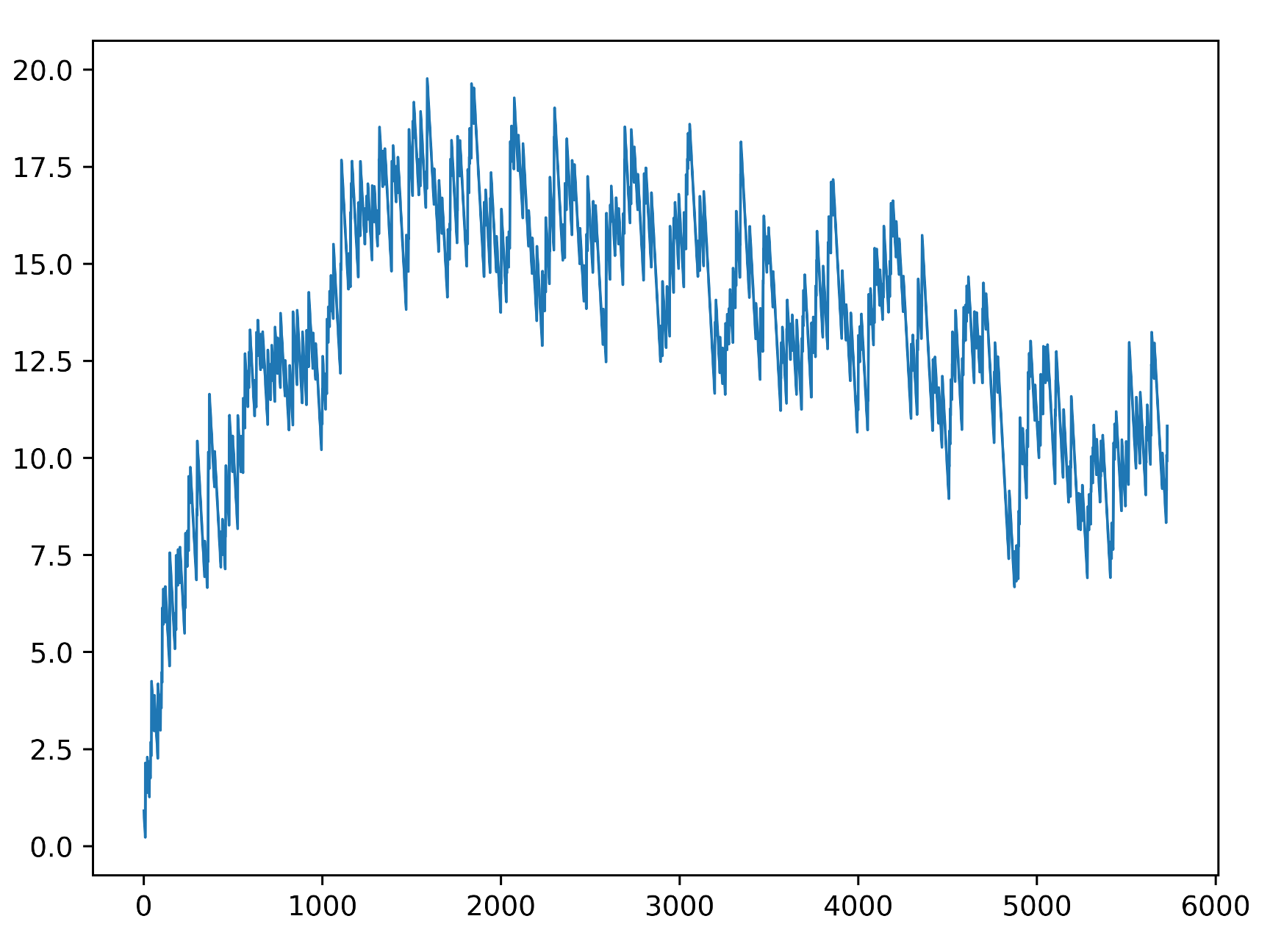}
    }
    \subfigure[$D_2(t)$]
    {
        \includegraphics[width=1.81in]{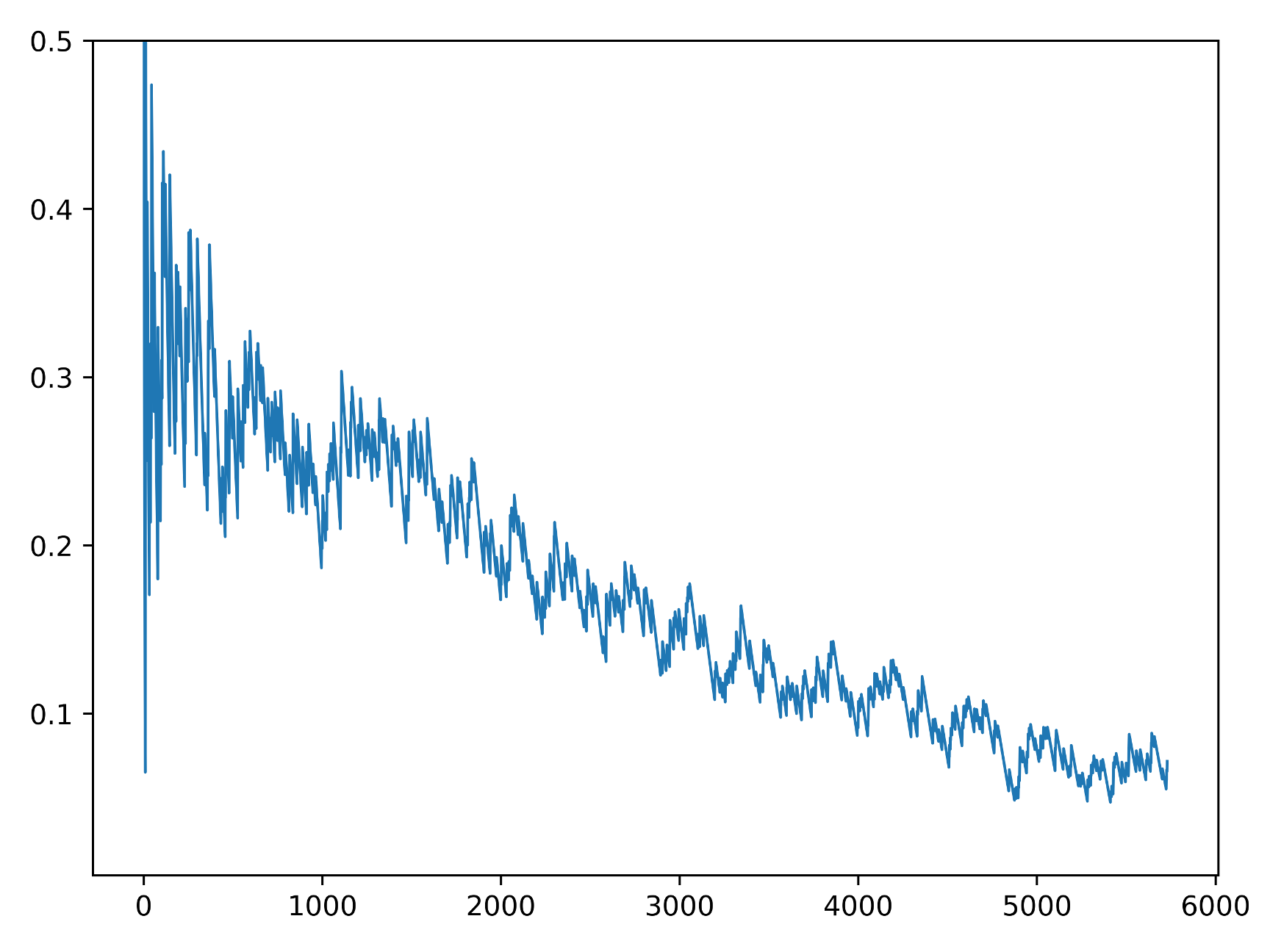}
    }
    \hfill
    \subfigure[$D_2(t)$ (zoom)]
    {
        \includegraphics[width=1.81in]{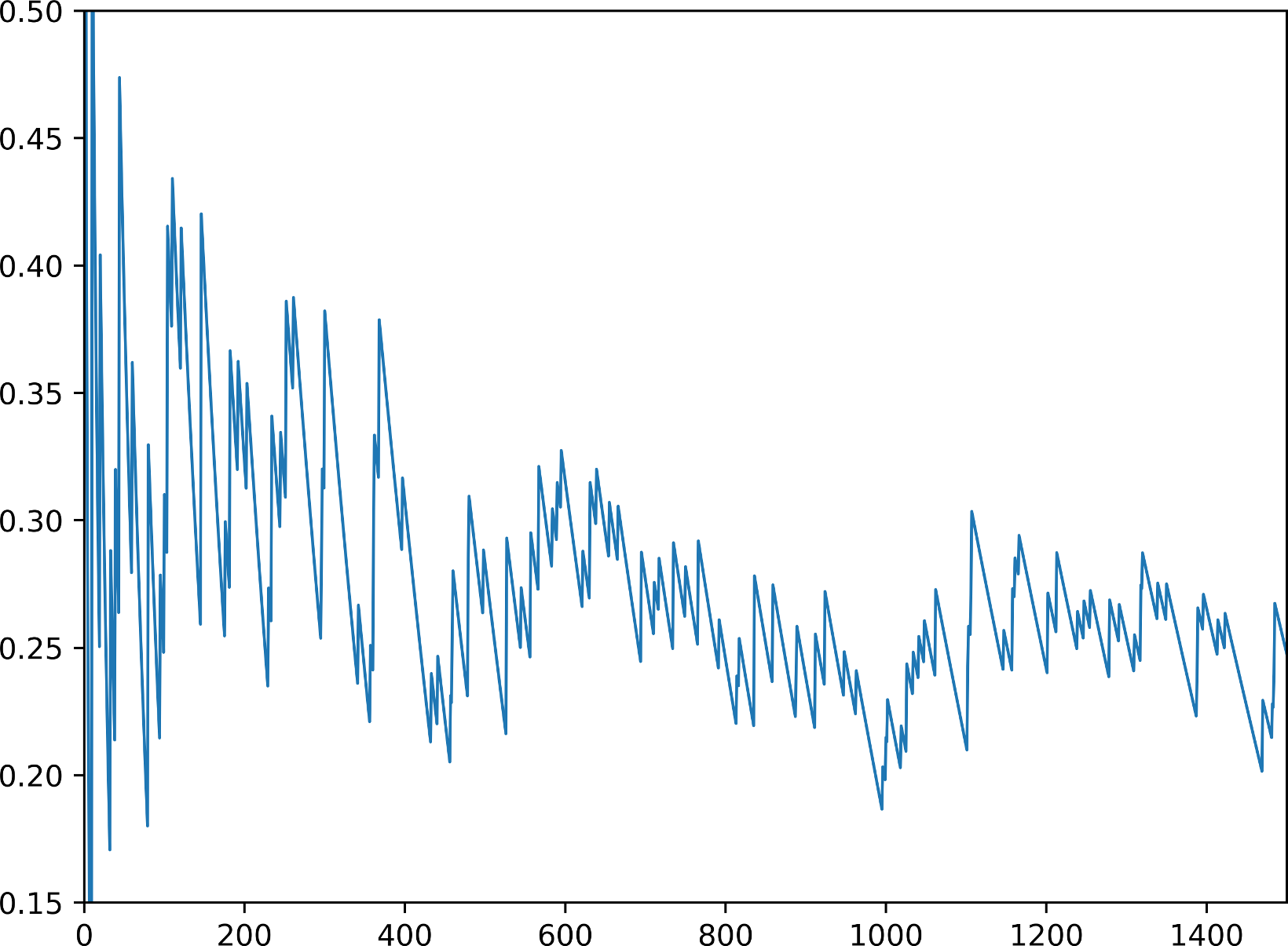}
    }
    \caption{Quadratic snowflake (Level 4, $a=0.45$) with Neumann BC}
\end{figure}

\clearpage

\begin{figure}[h!]
    \centering
    \subfigure[$N(t)$ (blue) and $\frac{A}{4\pi}t$ (orange)]
    {
        \includegraphics[width=1.81in]{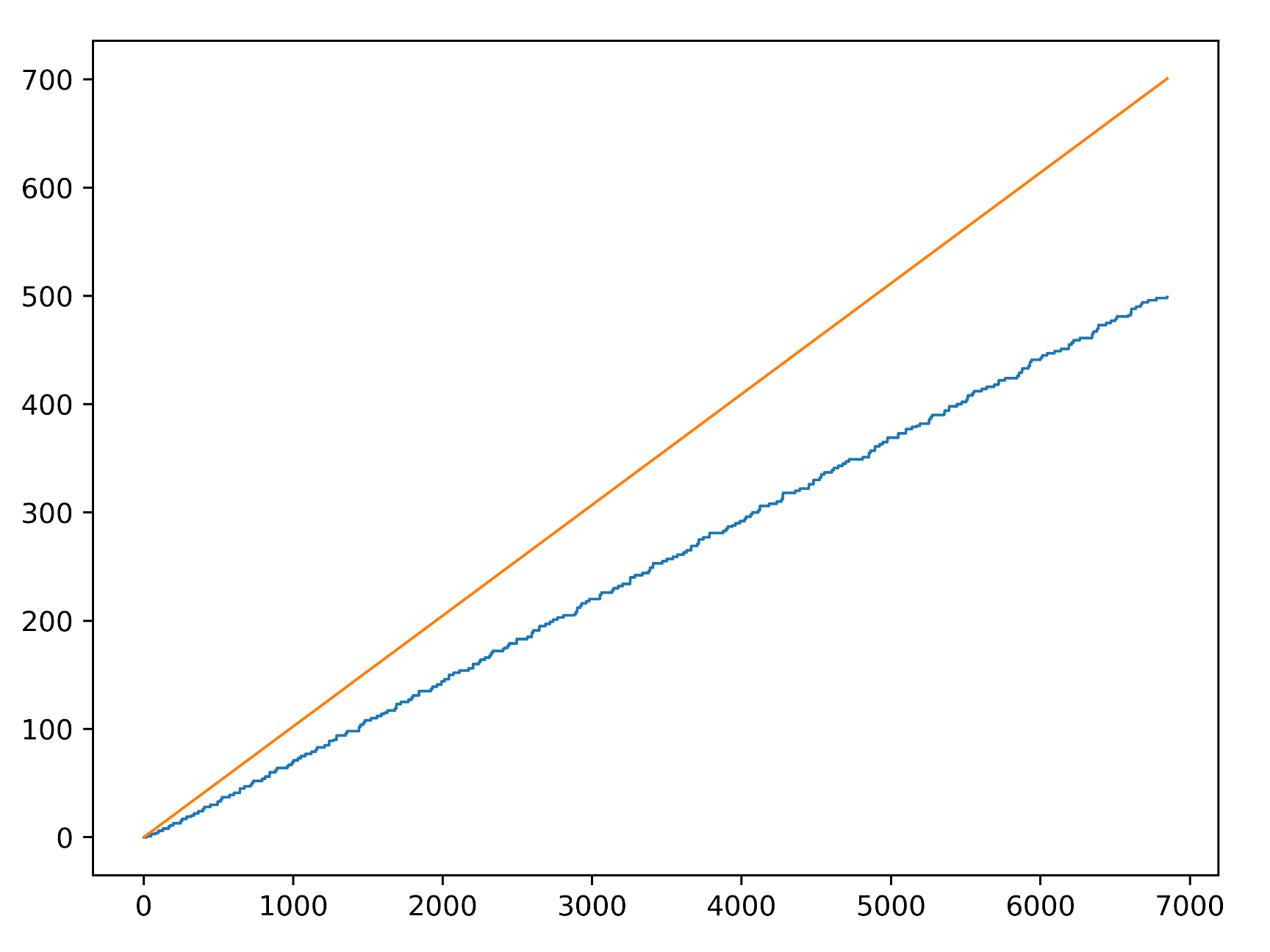}
    }
    \hfill
    \subfigure[$D_1(t)$]
    {
        \includegraphics[width=1.81in]{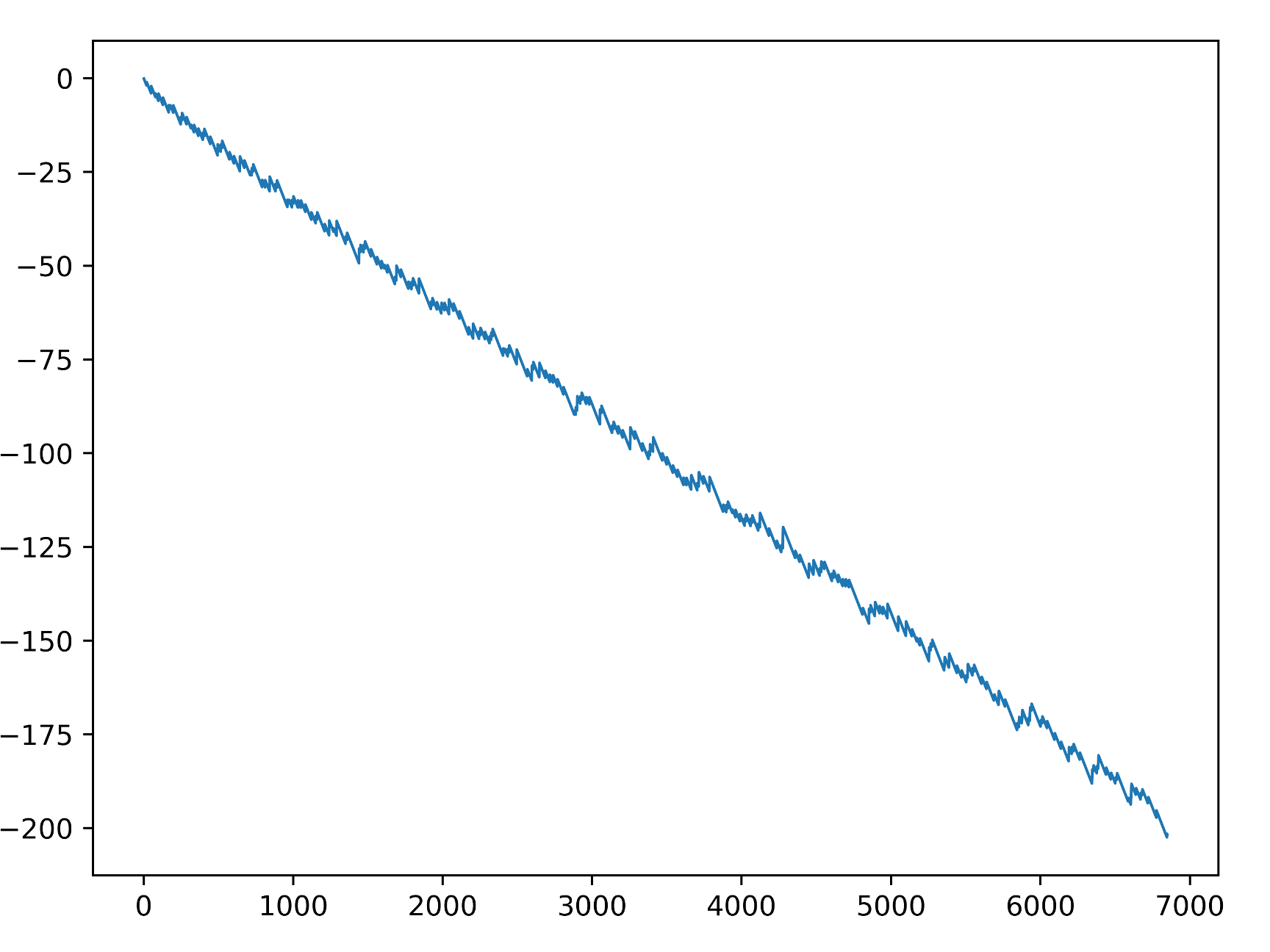}
    }
    \subfigure[$D_2(t)$]
    {
        \includegraphics[width=1.81in]{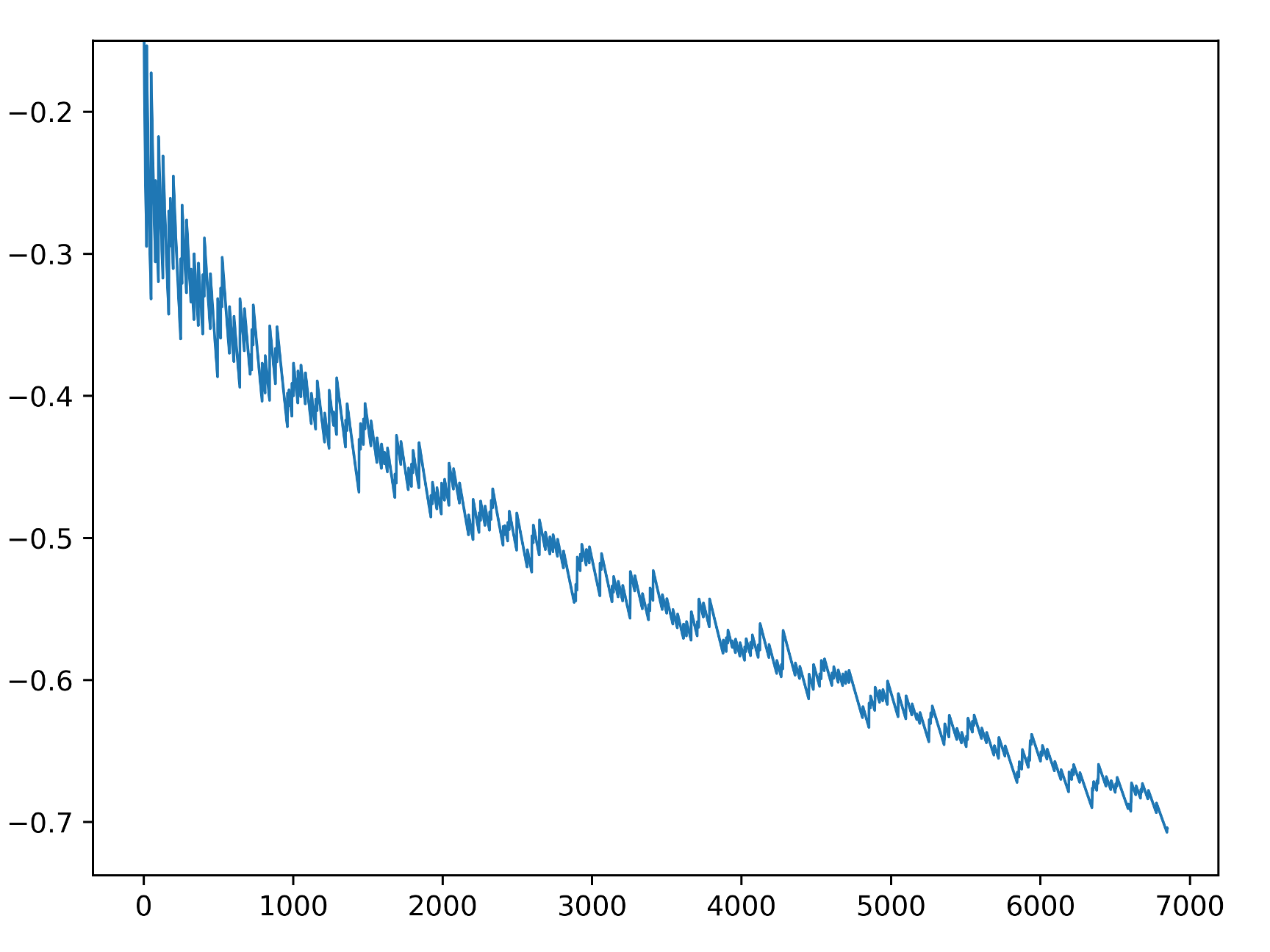}
    }
    \hfill
    \subfigure[$D_2(t)$ (zoom)]
    {
        \includegraphics[width=1.81in]{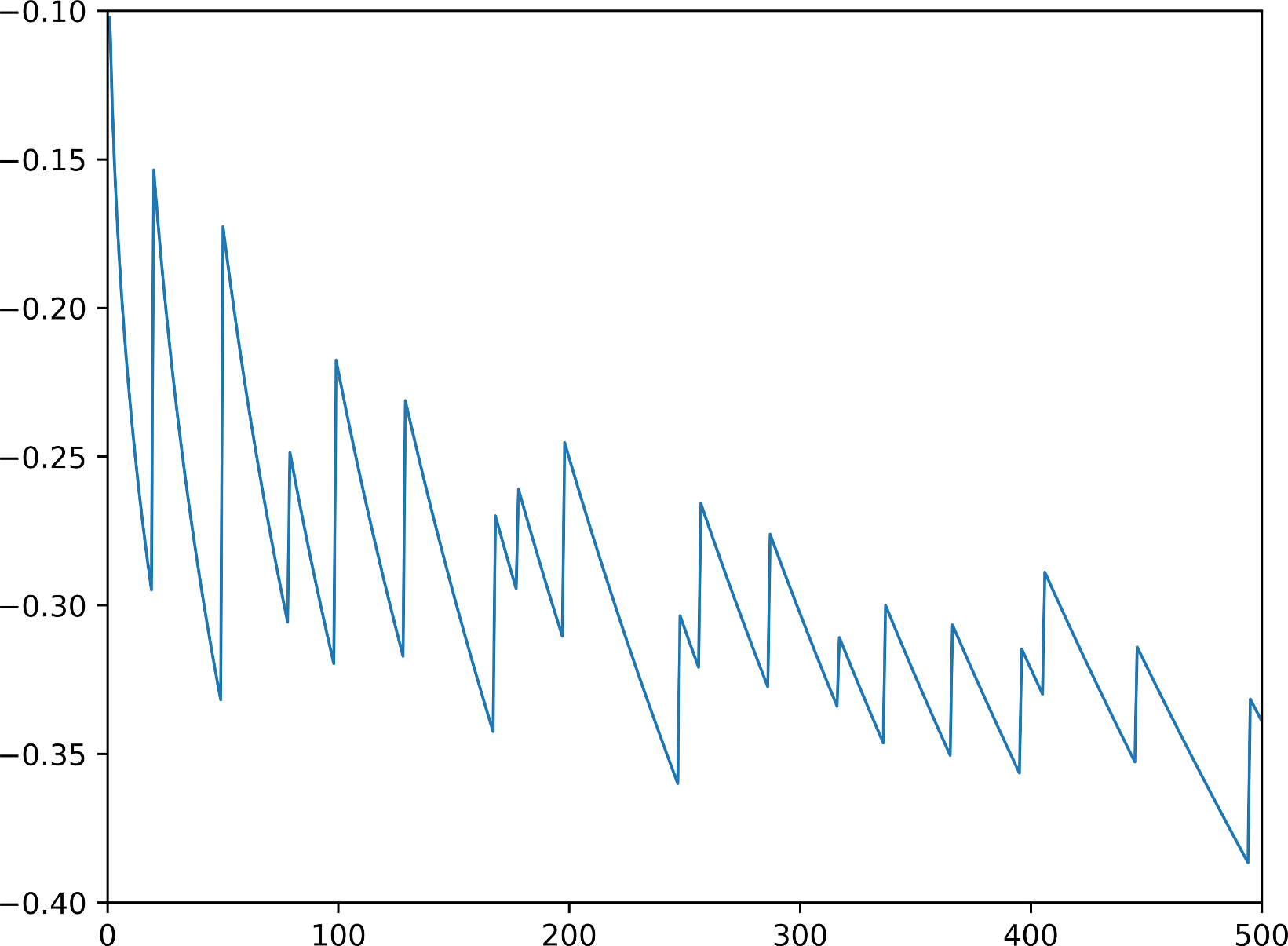}
    }
    \caption{Quadratic snowflake (Level 4, $a=0.4$) with Dirichlet BC}
\end{figure}

\begin{figure}[h!]
    \centering
    \subfigure[$N(t)$ (blue) and $\frac{A}{4\pi}t$ (orange)]
    {
        \includegraphics[width=1.81in]{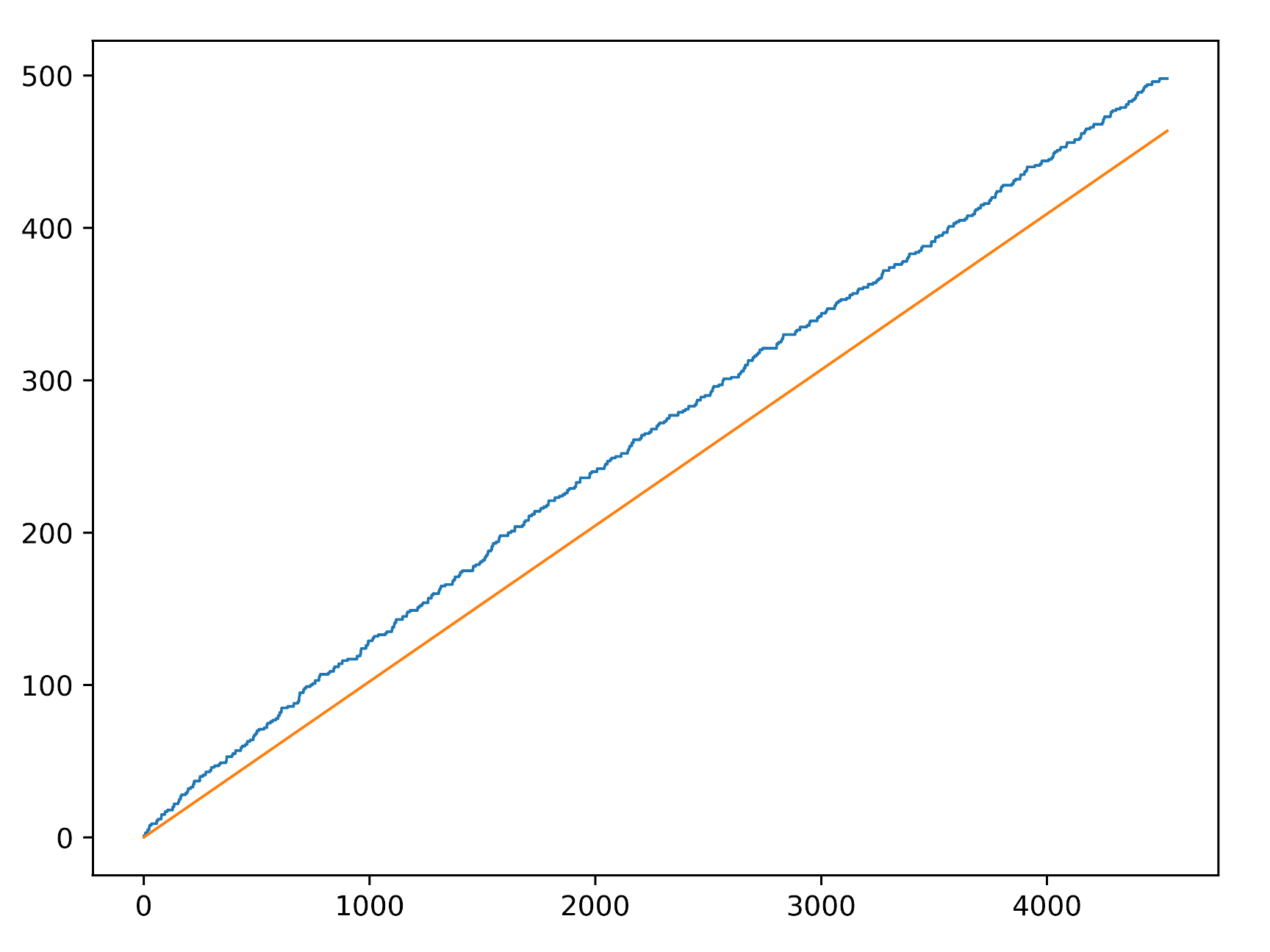}
    }
    \hfill
    \subfigure[$D_1(t)$]
    {
        \includegraphics[width=1.81in]{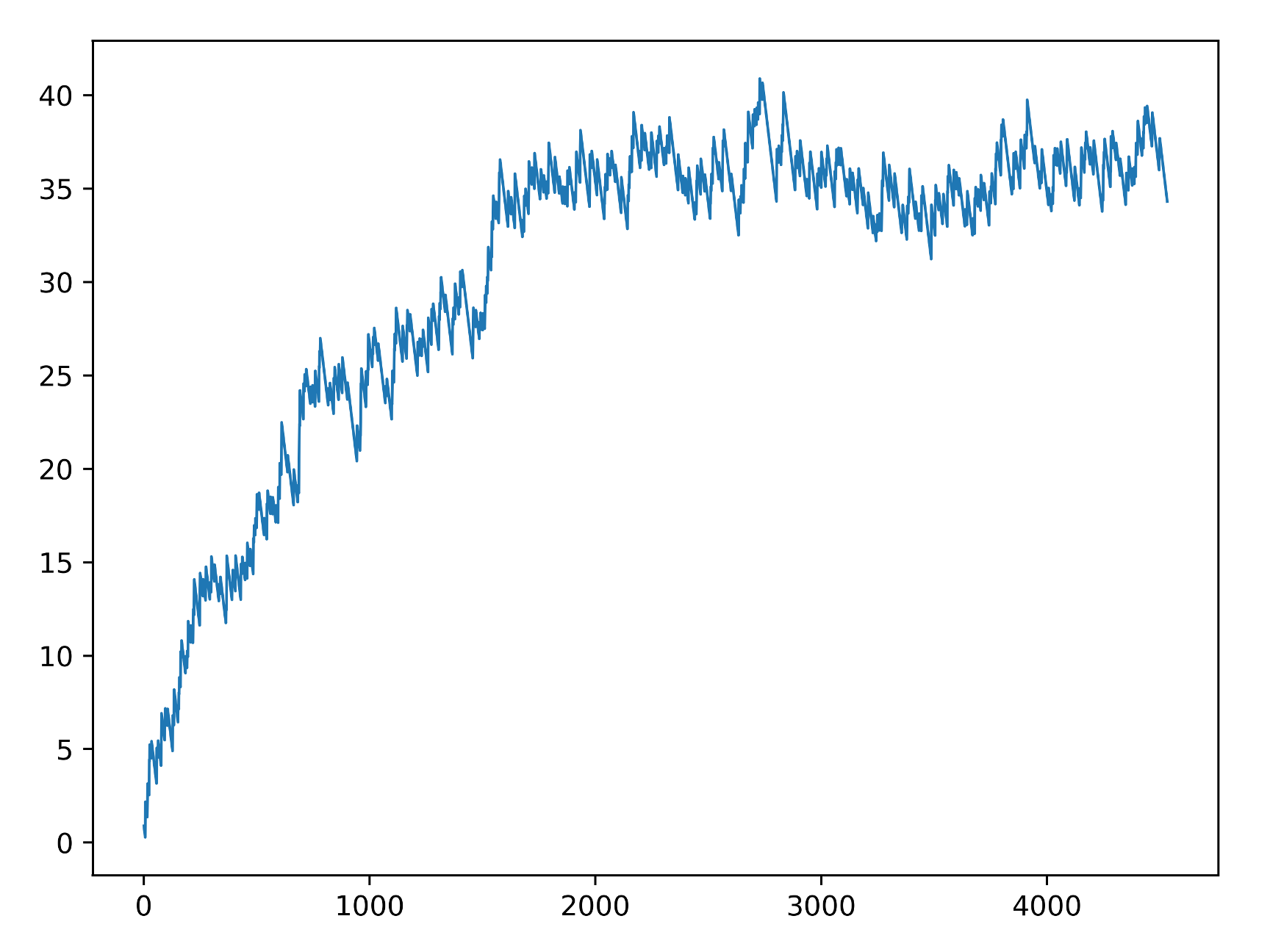}
    }
    \subfigure[$D_2(t)$]
    {
        \includegraphics[width=1.81in]{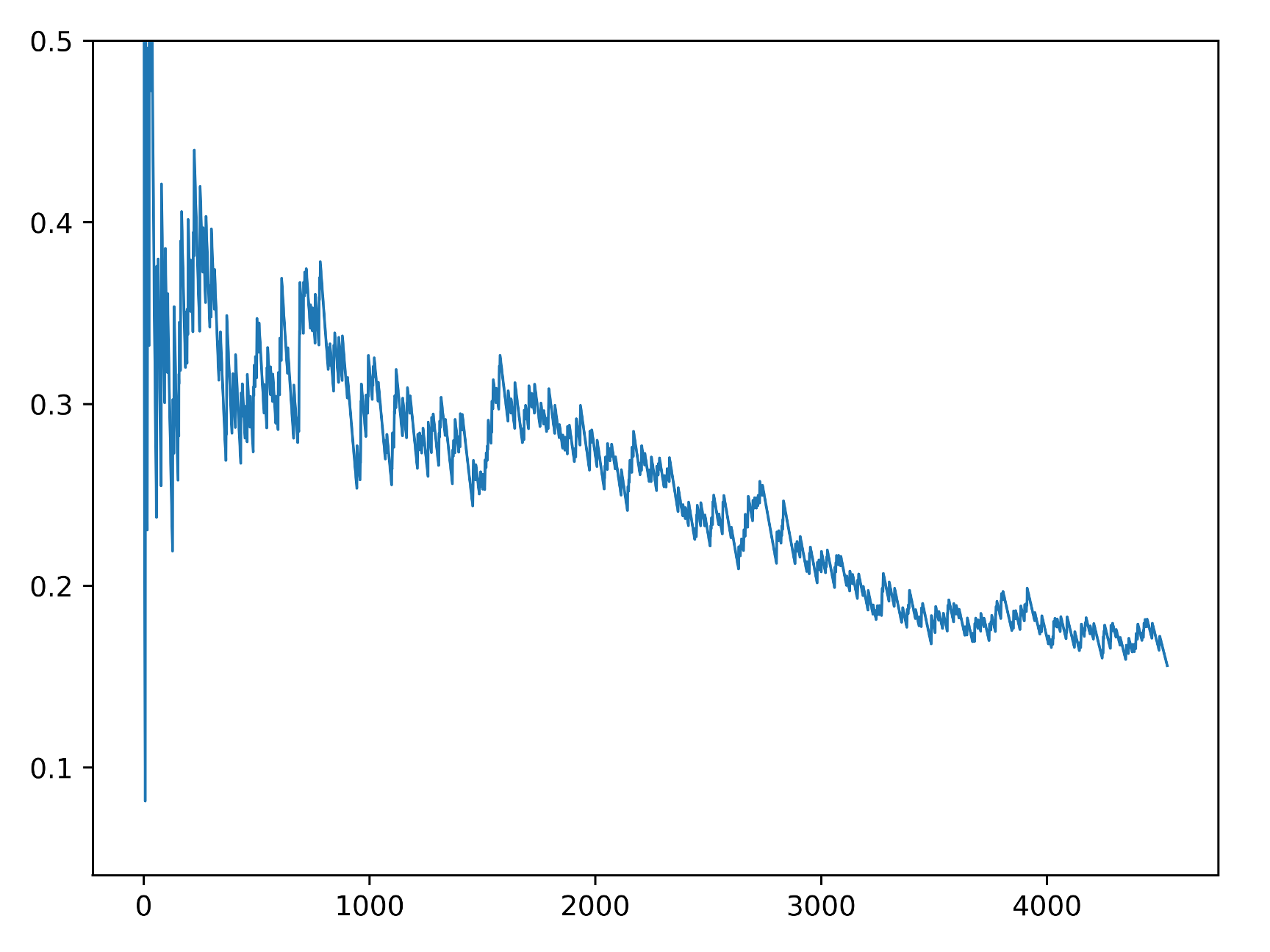}
    }
    \hfill
    \subfigure[$D_2(t)$ (zoom)]
    {
        \includegraphics[width=1.81in]{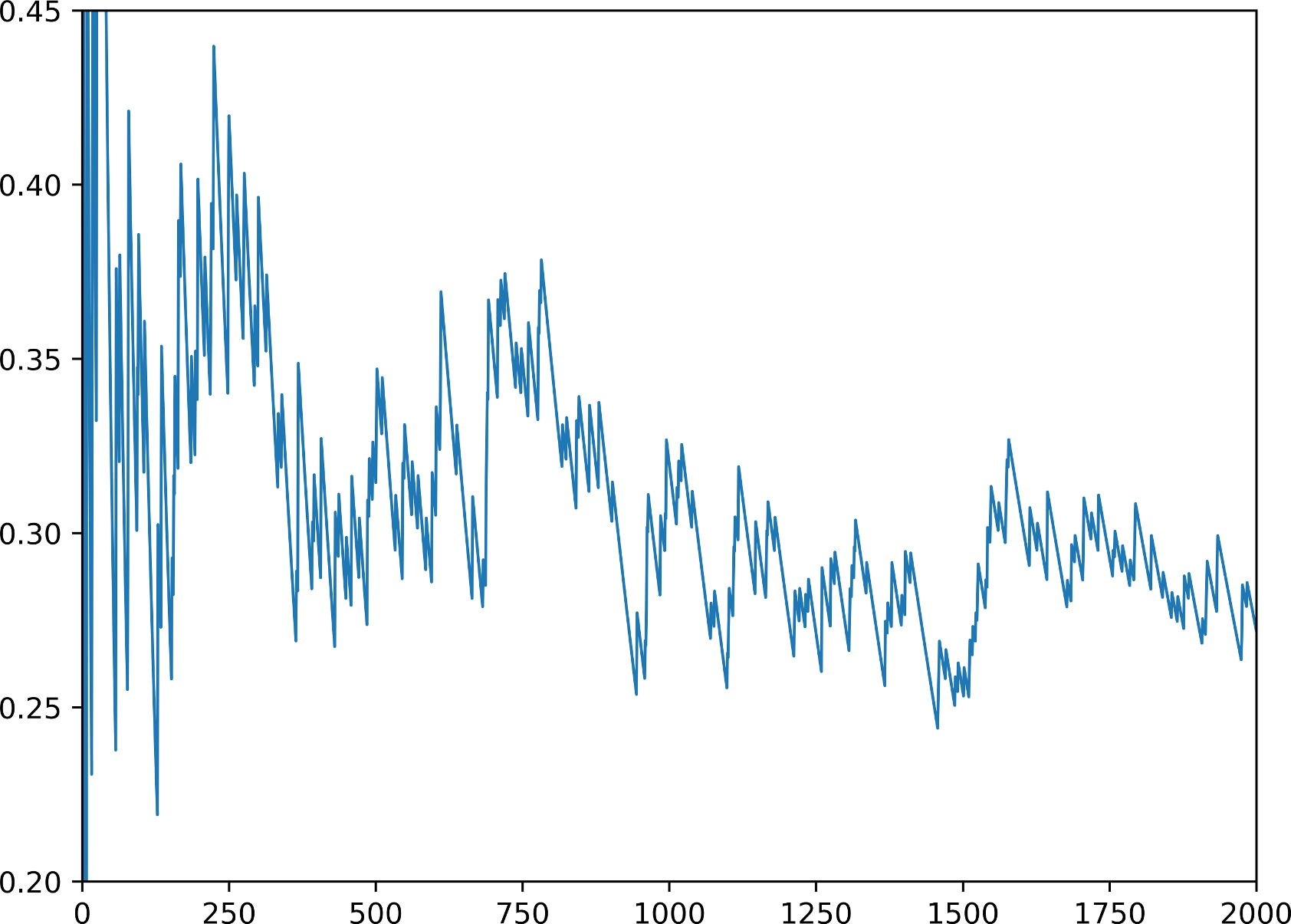}
    }
    \caption{Quadratic snowflake (Level 4, $a=0.4$) with Neumann BC}
\end{figure}

\clearpage

\begin{figure}[h!]
    \centering
    \subfigure[$N(t)$ (blue) and $\frac{A}{4\pi}t$ (orange)]
    {
        \includegraphics[width=1.81in]{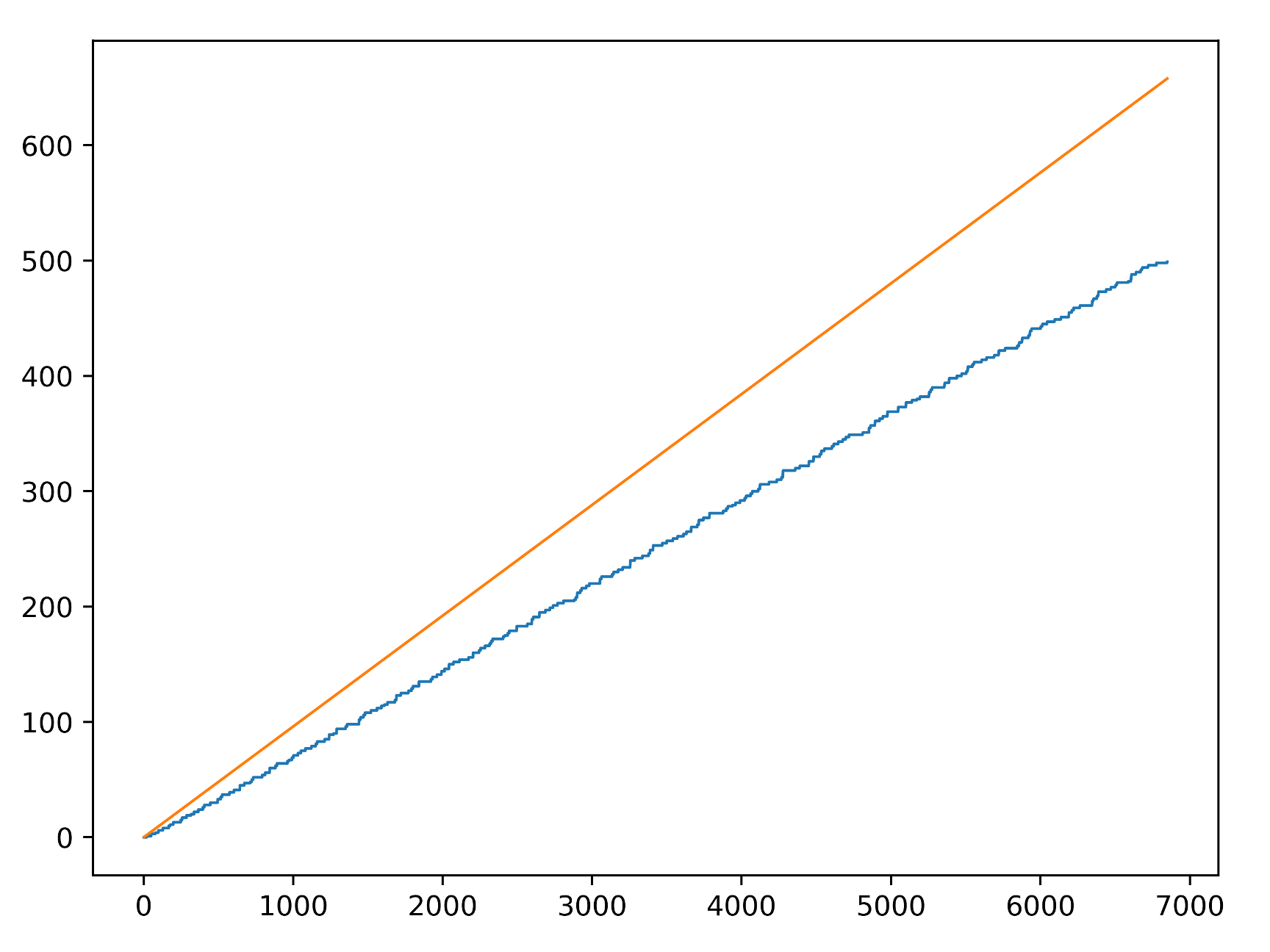}
    }
    \hfill
    \subfigure[$D_1(t)$]
    {
        \includegraphics[width=1.81in]{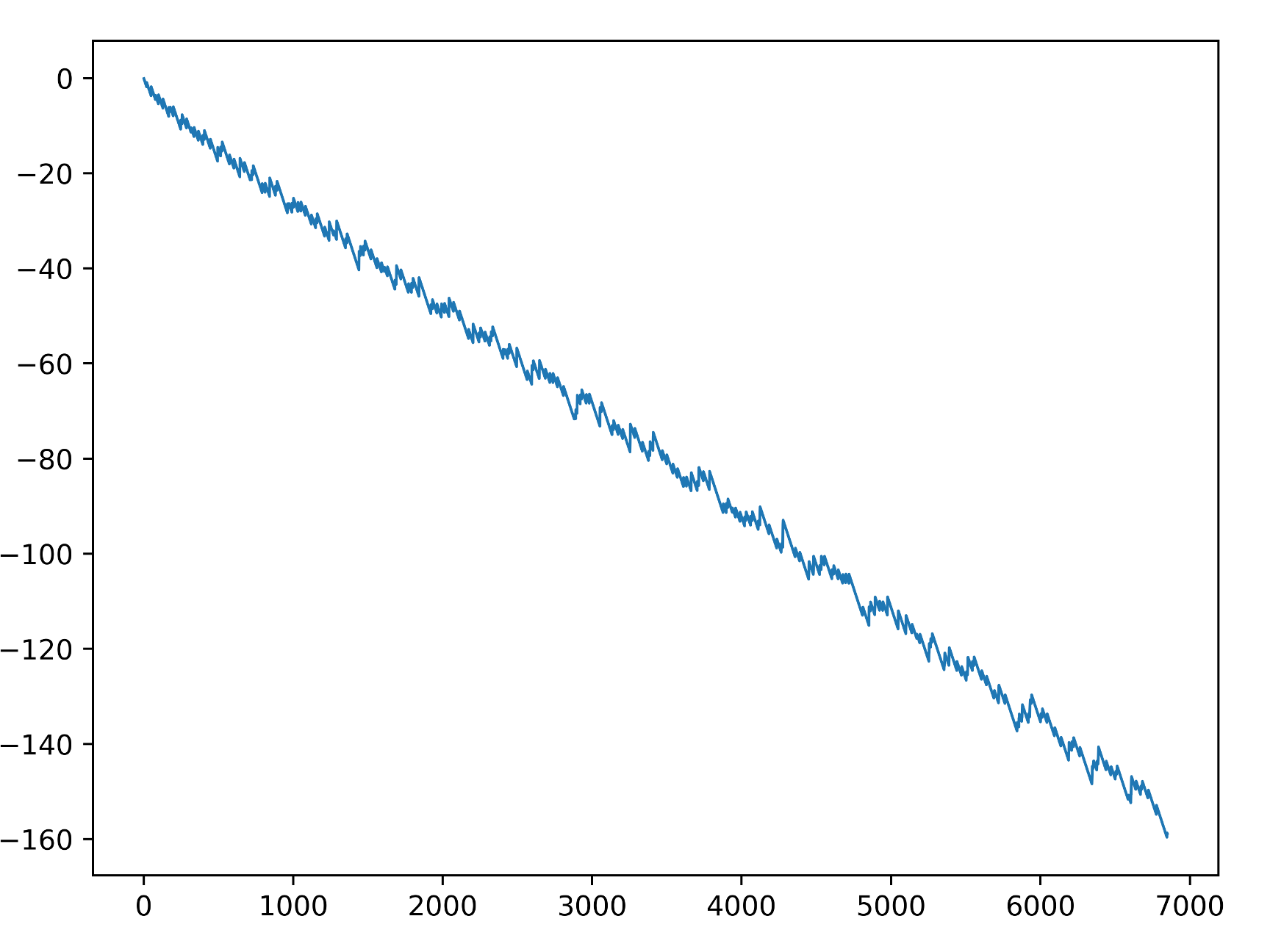}
    }
    \subfigure[$D_2(t)$]
    {
        \includegraphics[width=1.81in]{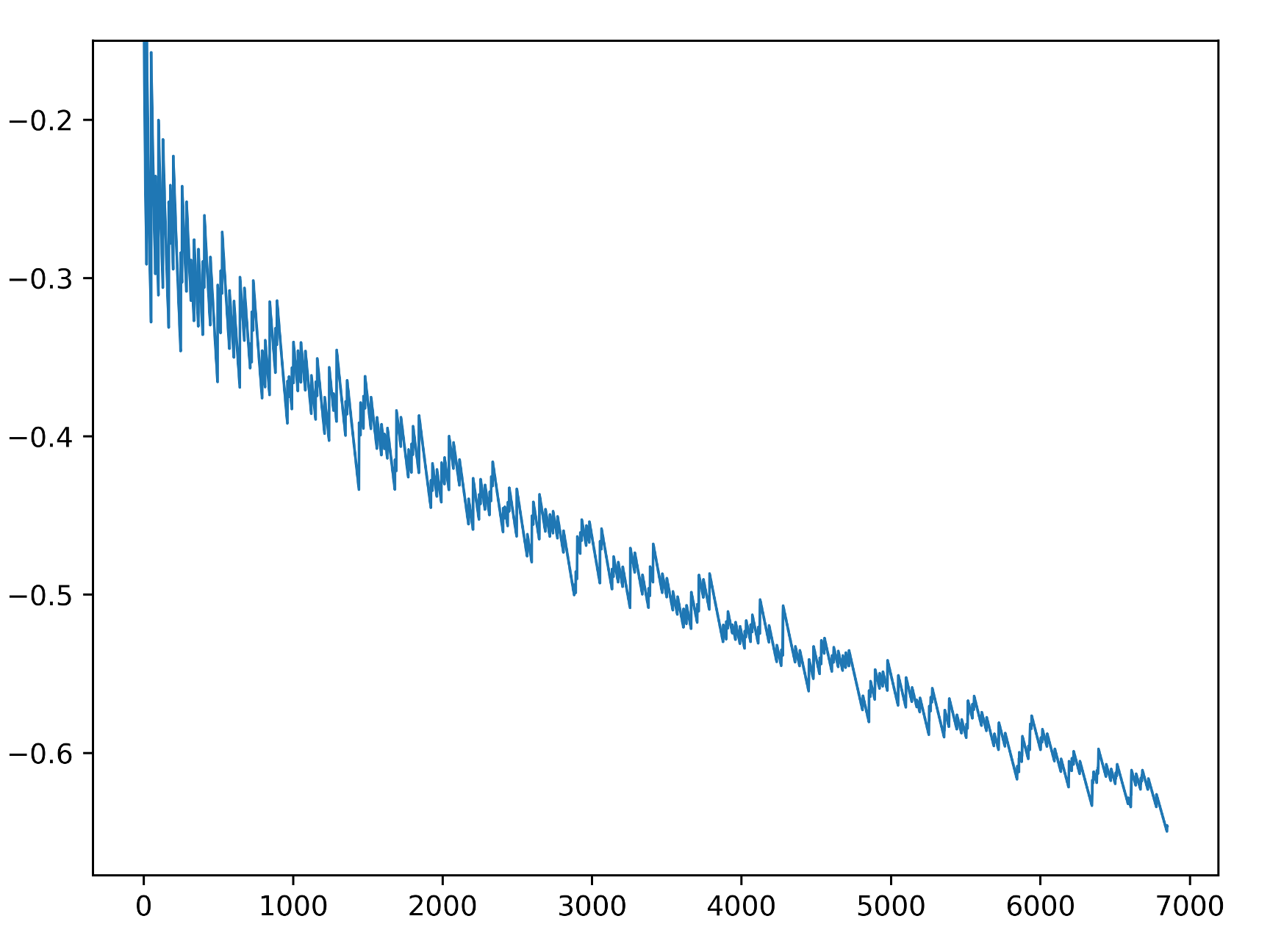}
    }
    \hfill
    \subfigure[$D_2(t)$ (zoom)]
    {
        \includegraphics[width=1.81in]{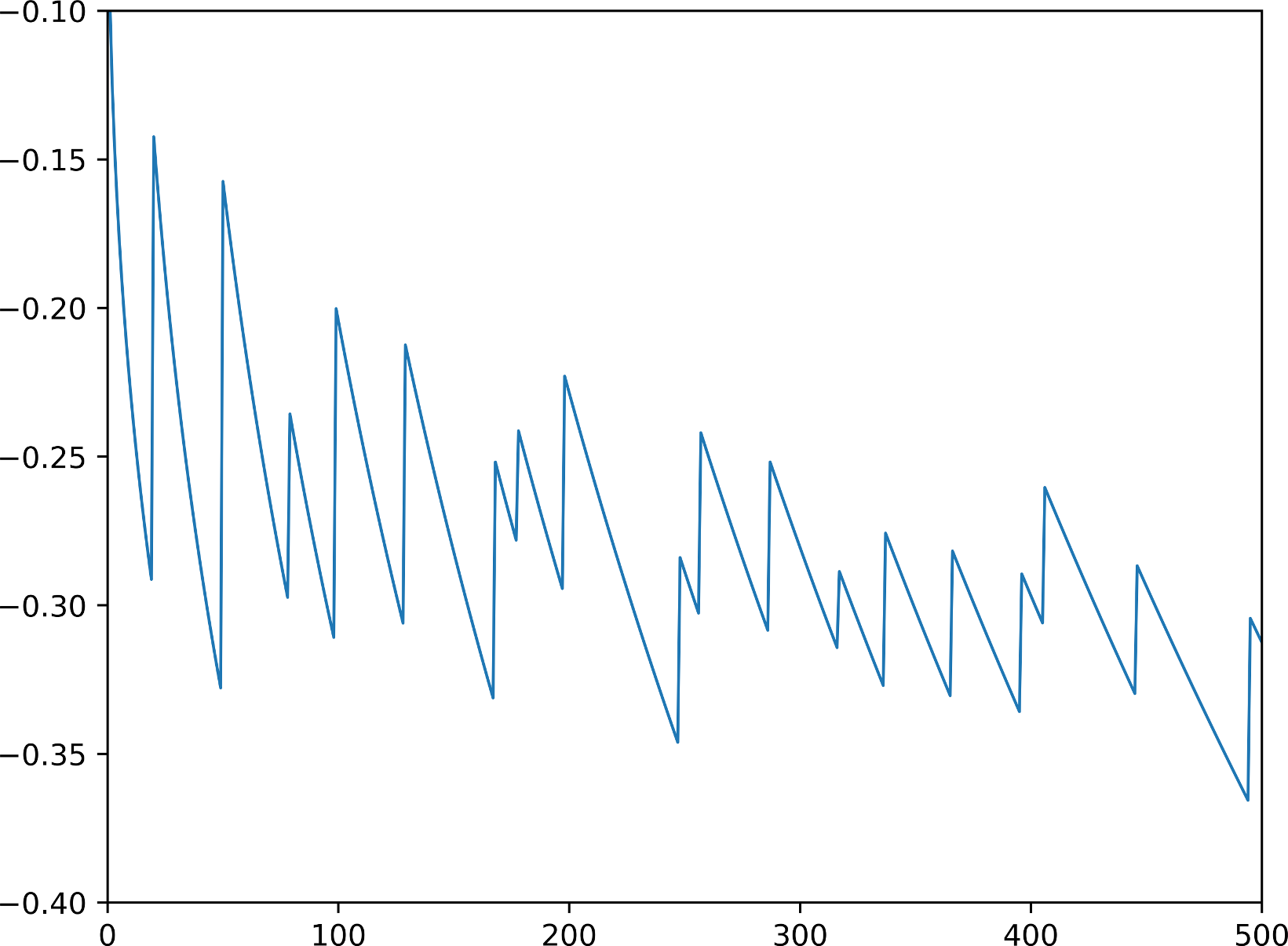}
    }
    \caption{Quad. snowflake (Level 4, $a=\sqrt{2}-1$) with Dirichlet BC}
\end{figure}

\begin{figure}[h!]
    \centering
    \subfigure[$N(t)$ (blue) and $\frac{A}{4\pi}t$ (orange)]
    {
        \includegraphics[width=1.81in]{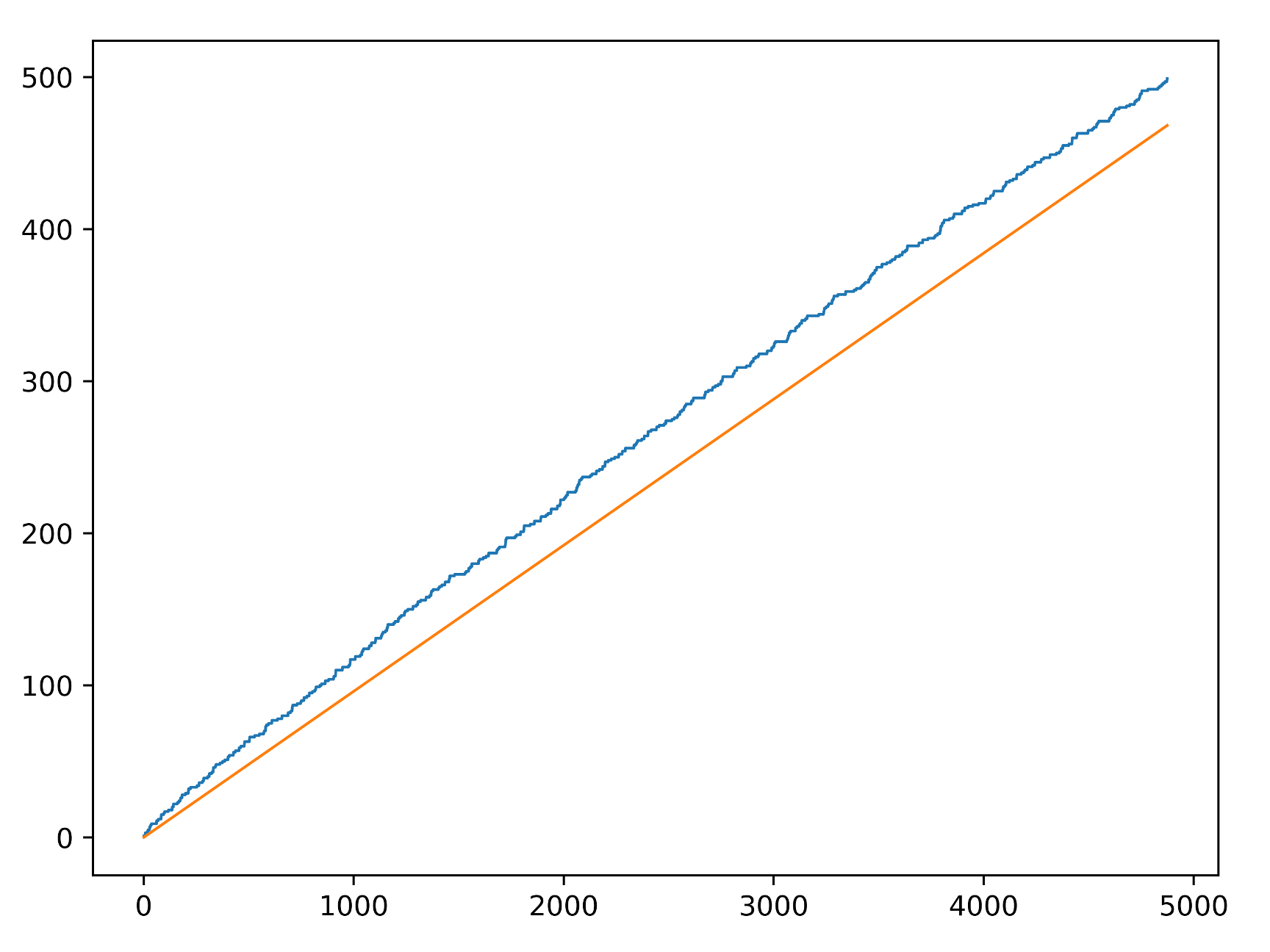}
    }
    \hfill
    \subfigure[$D_1(t)$]
    {
        \includegraphics[width=1.81in]{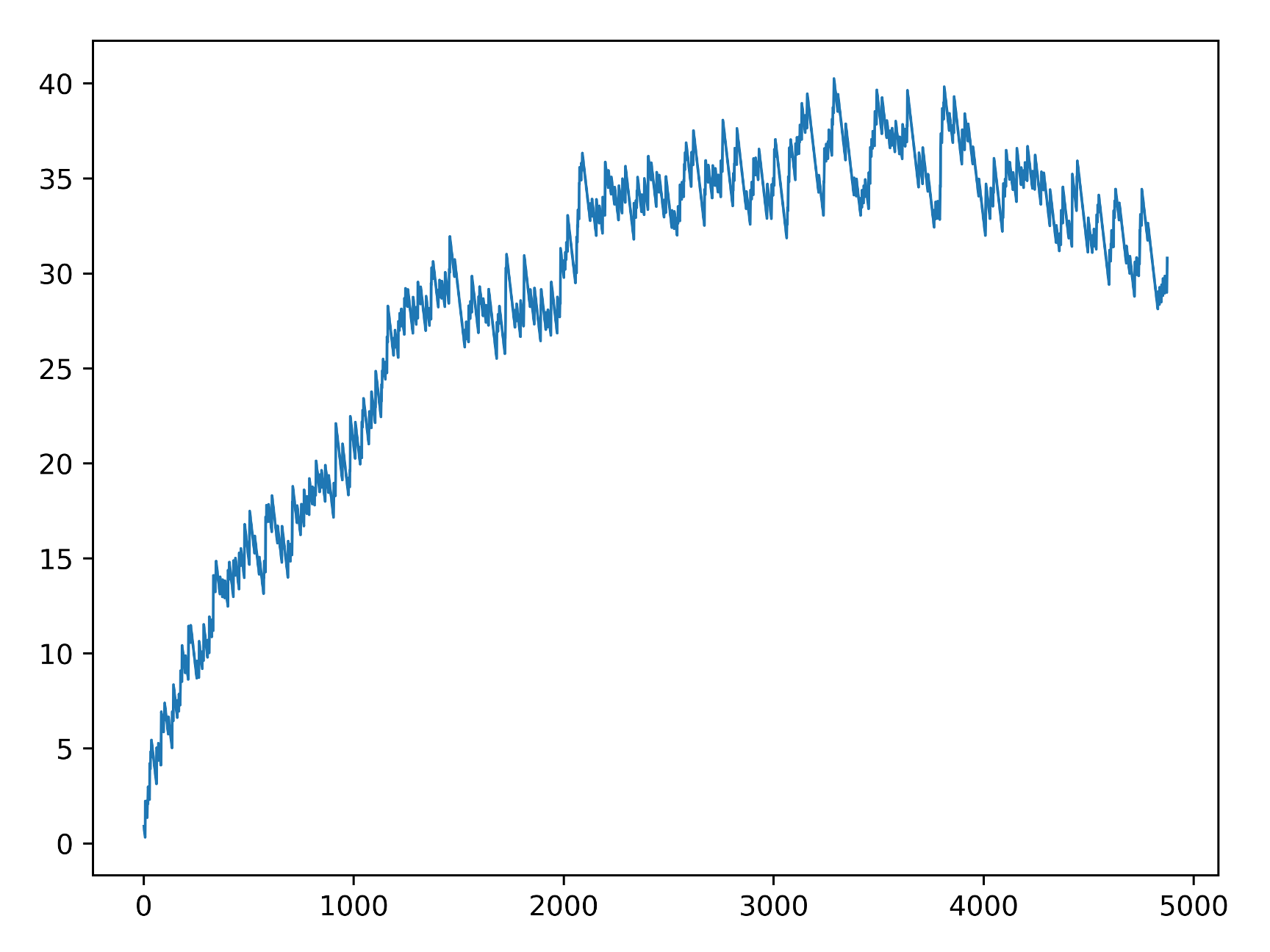}
    }
    \subfigure[$D_2(t)$]
    {
        \includegraphics[width=1.81in]{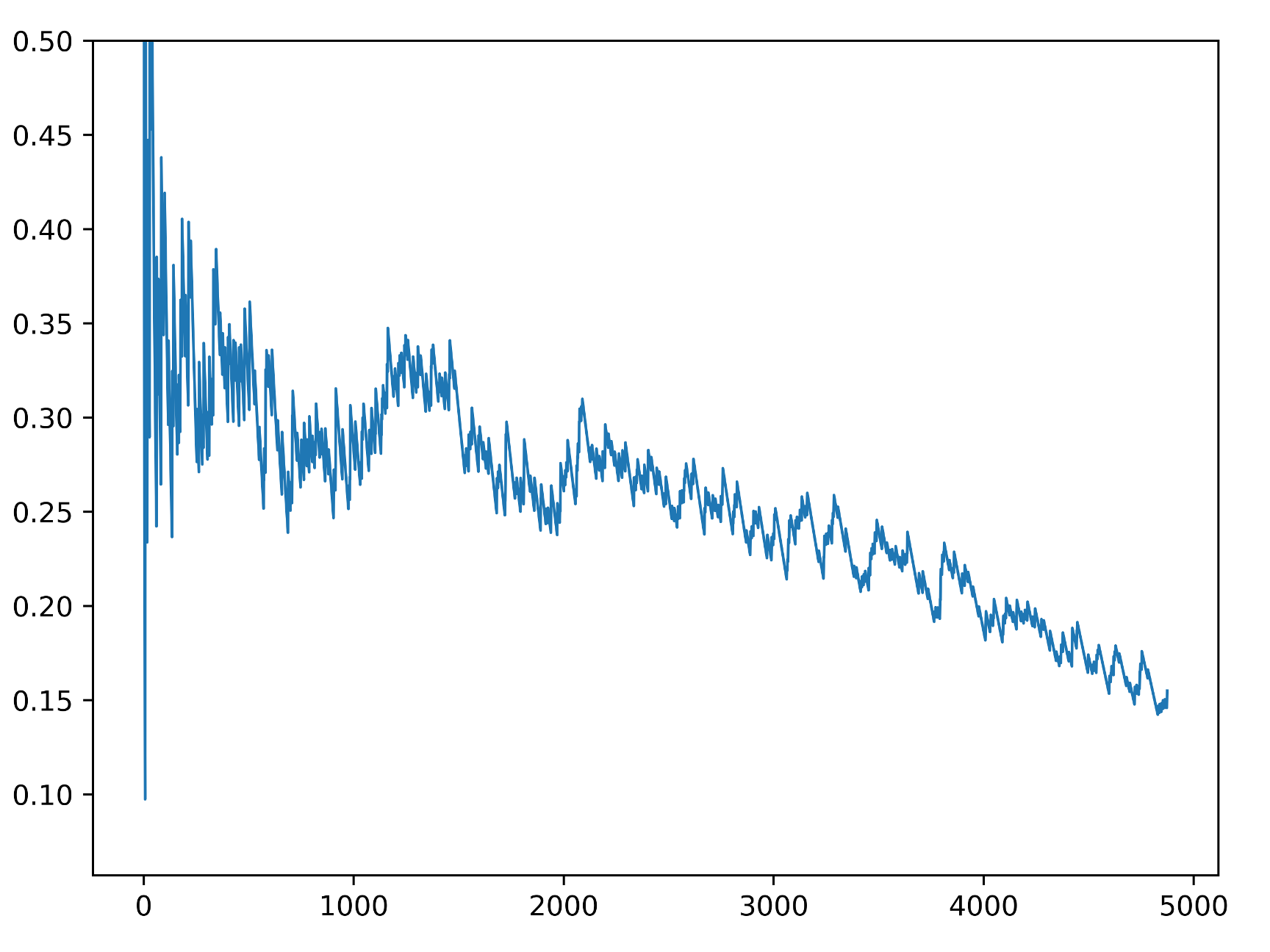}
    }
    \hfill
    \subfigure[$D_2(t)$ (zoom)]
    {
        \includegraphics[width=1.81in]{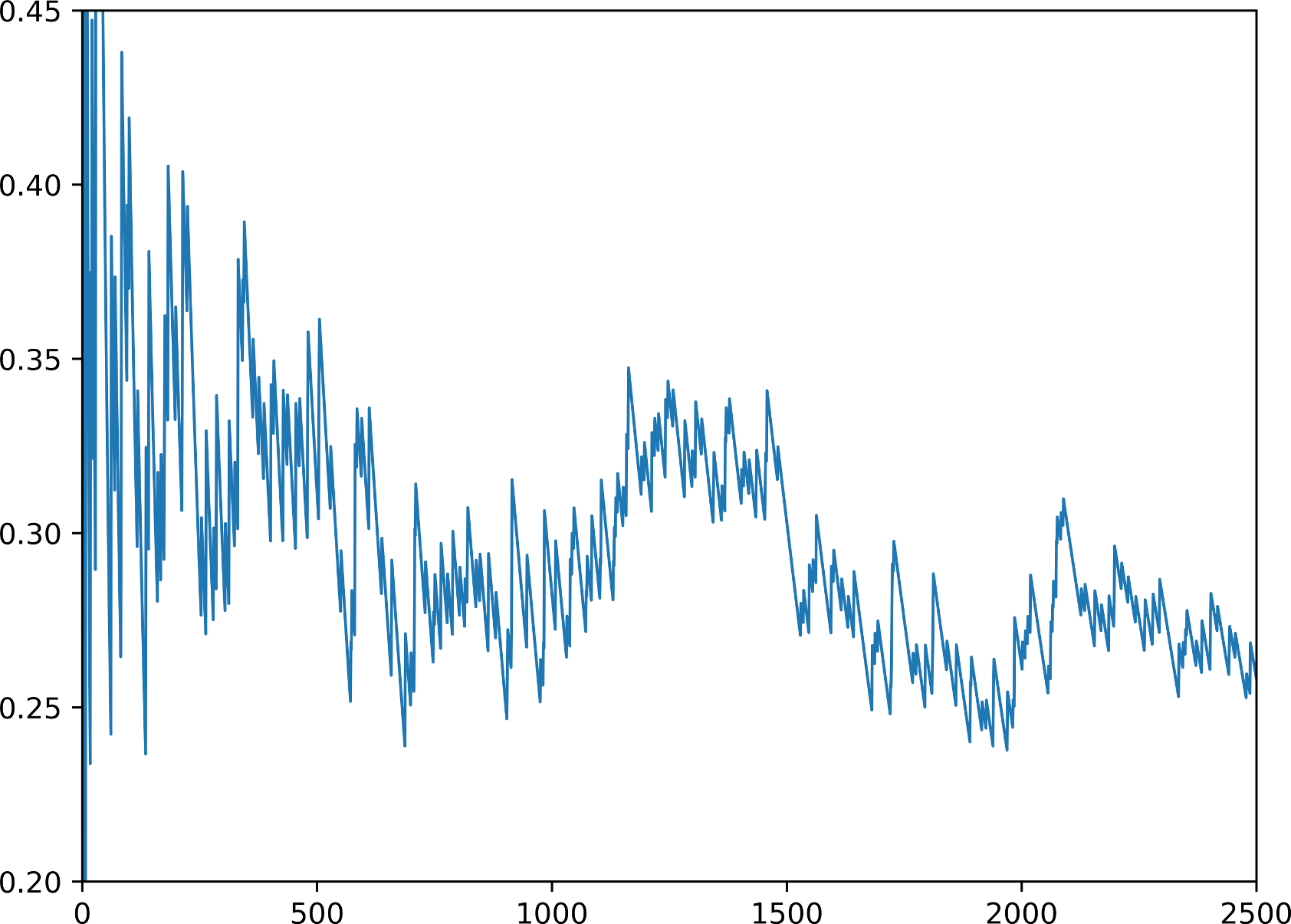}
    }
    \caption{Quad. snowflake (Level 4, $a=\sqrt{2}-1$) with Neumann BC}
    \label{fig:q3n}
\end{figure}

\clearpage

\section{Snowflake eigenfunctions.}

Fig. \ref{fig:sfd} shows three chosen eigenfunctions on the classic snowflake with Dirichlet boundary conditions. In (a) and (b) are eigenfunctions from an eigenspace with multiplicity 2; (a) is symmetric under both reflections; (b) skew-symmetric under both reflections. In (c) is an eigenfunction from the following one-dimensional eigenspace with $D_6$ symmetry. In fact, Neuberger et al. showed in \cite{NSS} that all Dirichlet eigenfunctions on the classical snowflake from an one-dimensional eigenspace have D6 symmetry and all Dirichlet eigenfunctions from a two-dimensional eigenspace are symmetric under both reflections. There are two types of each.

\begin{figure}[h!]
    \centering
    \subfigure[For $\lambda_4=165.96$]
    {
        \includegraphics[width=1.4in]{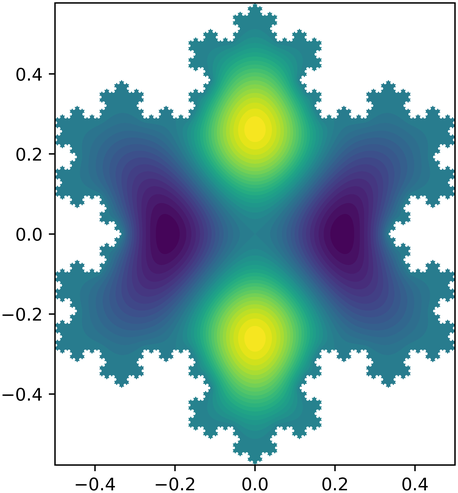}
    }
    \subfigure[For $\lambda_5=165.96$]
    {
        \includegraphics[width=1.4in]{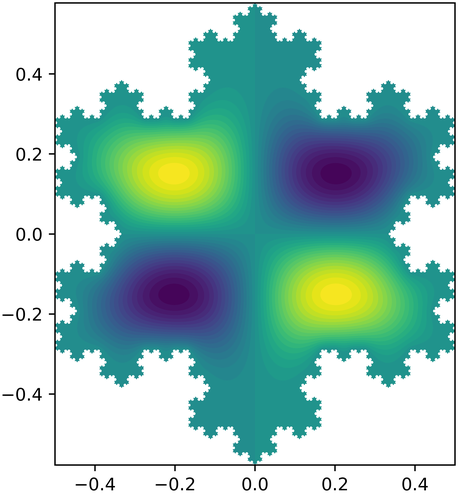}
    }
    \subfigure[For $\lambda_6=191.01$]
    {
        \includegraphics[width=1.4in]{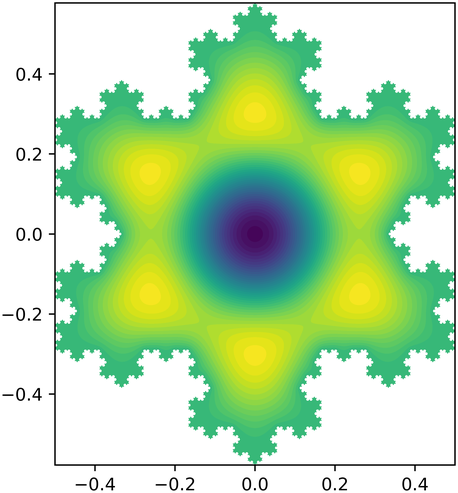}
    }
    \caption{Eigenfunctions on the C. Snowflake (Lv. 6) with DBC}
    \label{fig:sfd}
\end{figure}

Fig. \ref{fig:sfn} shows three chosen eigenfunctions on the classic snowflake with Neumann boundary conditions. In (a) and (b) are eigenfunctions from an eigenspace with multiplicity 2; (a) is symmetric under both reflections; (b) skew-symmetric under both reflections. In (c) is an eigenfunction from the following one-dimensional eigenspace that is skew-symmetric under $60°$ rotation and symmetric under $120°$ rotation. Neuberger et al. showed in \cite{NSS} that all Neumann eigenfunctions on the classical snowflake from an one-dimensional eigenspace have D6 symmetry and all Dirichlet eigenfunctions from a two-dimensional eigenspace are have the symmetric properties as in the picture.

\begin{figure}[h!]
    \centering
    \subfigure[For $\lambda_4=23.32$]
    {
        \includegraphics[width=1.4in]{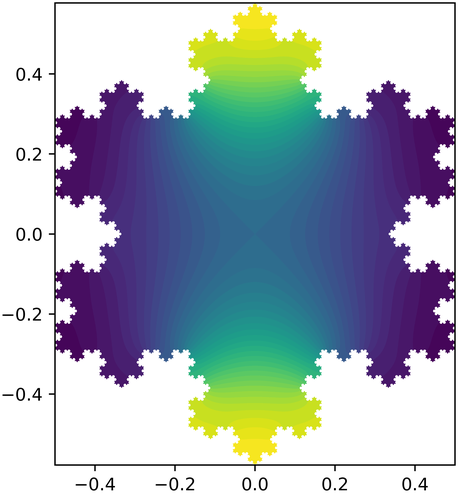}
    }
    \subfigure[For $\lambda_5=23.32$]
    {
        \includegraphics[width=1.4in]{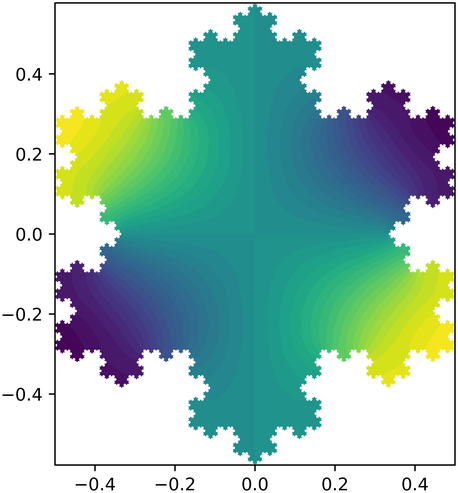}
    }
    \subfigure[For $\lambda_6=27.79$]
    {
        \includegraphics[width=1.4in]{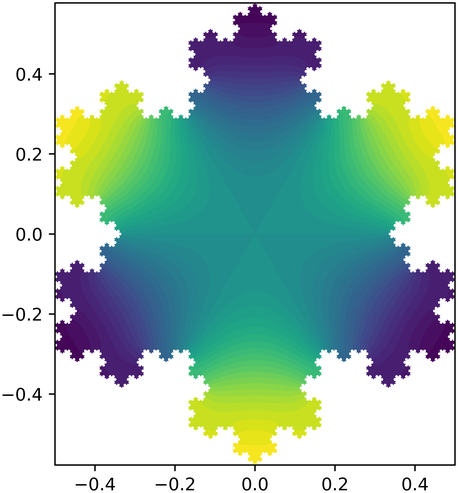}
    }    
    \caption{Eigenfunction on the C. Snowflake (Lv. 6) with NBC}
    \label{fig:sfn}
\end{figure}

\clearpage


We computed the energy-distribution of each eigenfunction by assigning the same weight to each cell - the sum of the squared normal derivative at each edge. It is clear that for both Dirichlet and Neumann conditions, the energy function of an eigenfunction from an one-dimensional eigenspace have $D_6$ symmetry (Figure \ref{fig:energy1}) and all from a two-dimensional eigenspace are symmetric under both reflections (Figure \ref{fig:energy2}).

\vspace{0.5cm}

\begin{figure}[h]
        \includegraphics[width=0.7\linewidth]{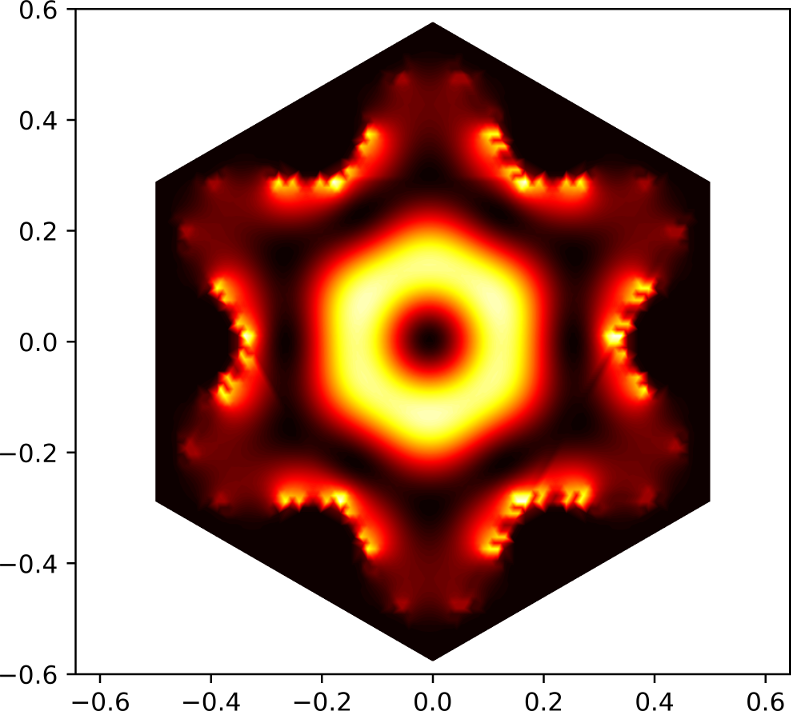}
    \caption{Energy Function on the C. Snowflake (Lv. 6) with DBC for $\lambda_6=192.41$}
    \label{fig:energy1}
\end{figure}

\begin{figure}[h]
    \centering
    \subfigure[For $\lambda_4=23.83$]
    {
        \includegraphics[width=2.2in]{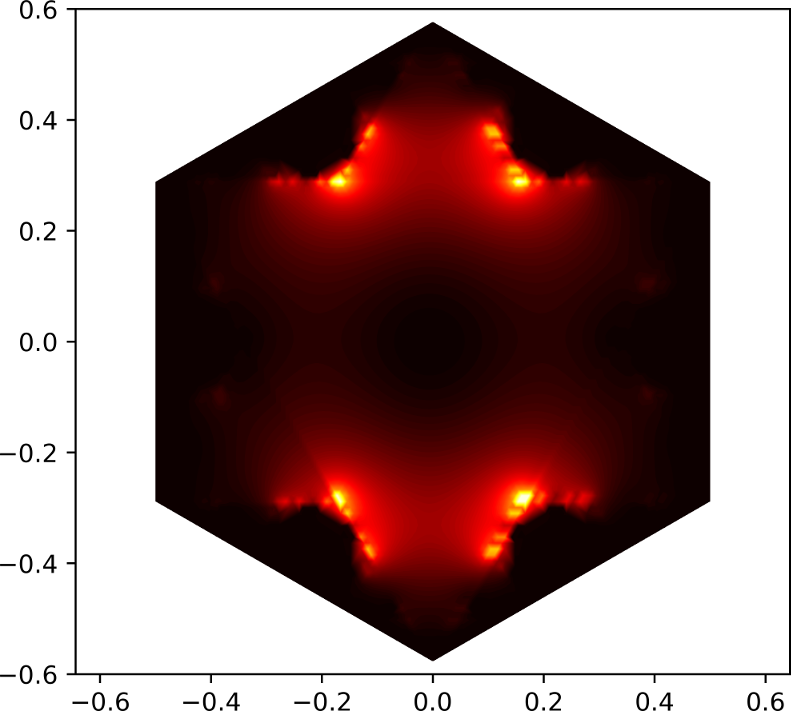}
    }
    \hfill
    \subfigure[For $\lambda_5=23.83$]
    {
        \includegraphics[width=2.2in]{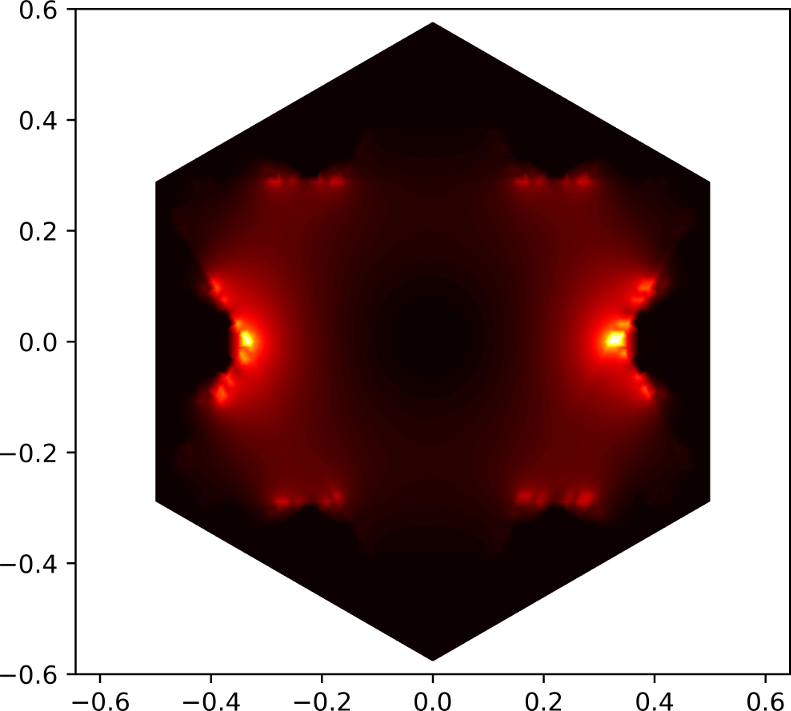}
    }
    \caption{Energy Functions on the C. Snowflake (Lv. 6) with NBC}
    \label{fig:energy2}
\end{figure}

\clearpage

We observe an interesting phenomenom for the Neumann eigenfunctions: it seems like most energy functions can be constructed (at least near the boundary) by choosing a linear combination of the two energy functions of the eigenfunctions from the same eigenspace that came from the previous. In the shown example, a linear combination of eigenfunction $4$ and $5$ looks like a $6$-eigenfunction with Neumann boundary conditions (Figure \ref{fig:energysum1}), and a linear combination of eigenfunctions $23$ and $24$ looks like a $25$-eigenfunction (Figure \ref{fig:energysum2}). By linear combination we mean taking a linear combination of the energies at each individual cell.

\vspace{0.2cm}

\begin{figure}[h]
    \centering
    \subfigure[For $\lambda_6=28.47$]
    {
        \includegraphics[width=2.2in]{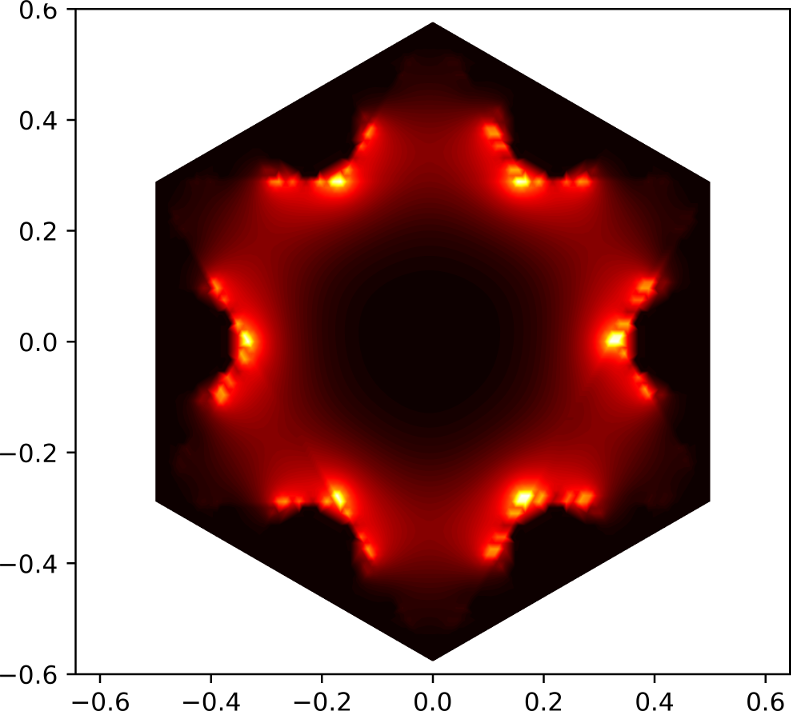}
    }
    \hfill
    \subfigure[Sum of Energy Functions for $\lambda_4$ and $\lambda_5$]
    {
        \includegraphics[width=2.2in]{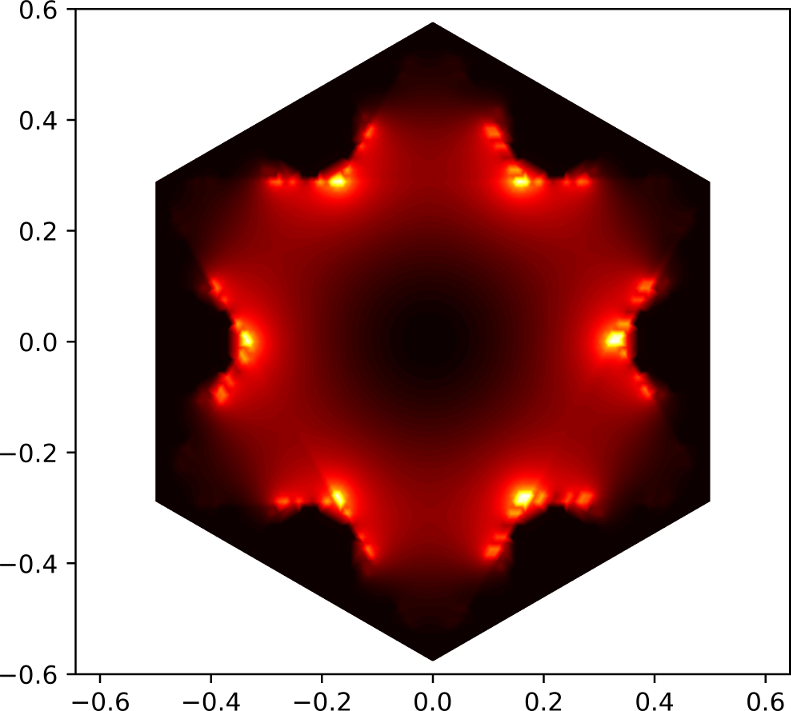}
    }
    \caption{Energy Functions on the C. Snowflake (Lv. 6) with NBC}
    \label{fig:energysum1}
\end{figure}

\begin{figure}[h]
    \centering
    \subfigure[For $\lambda_{25}=244.18$]
    {
        \includegraphics[width=2.2in]{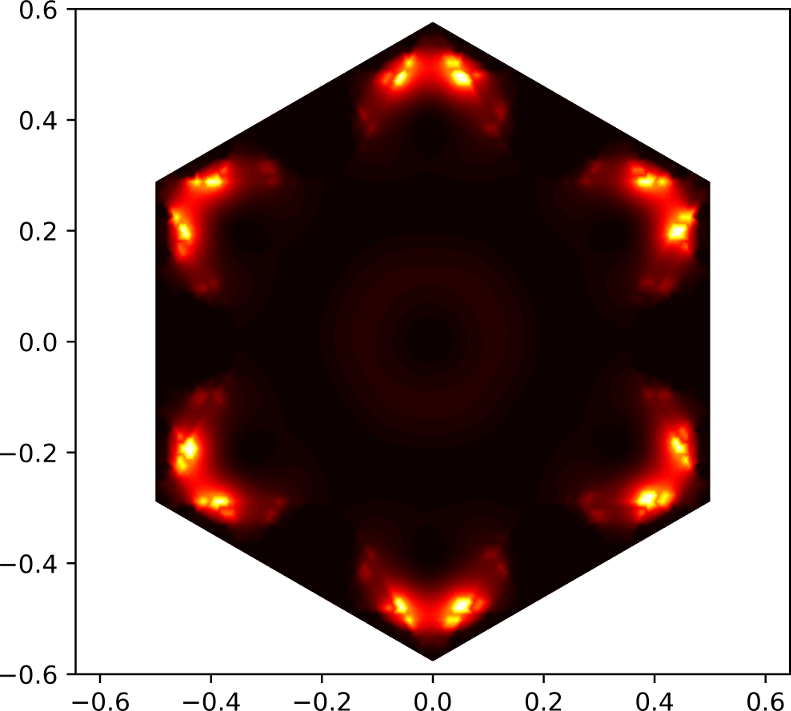}
    }
    \hfill
    \subfigure[Sum of Energy Functions for $\lambda_{23}$ and $\lambda_{24}$]
    {
        \includegraphics[width=2.2in]{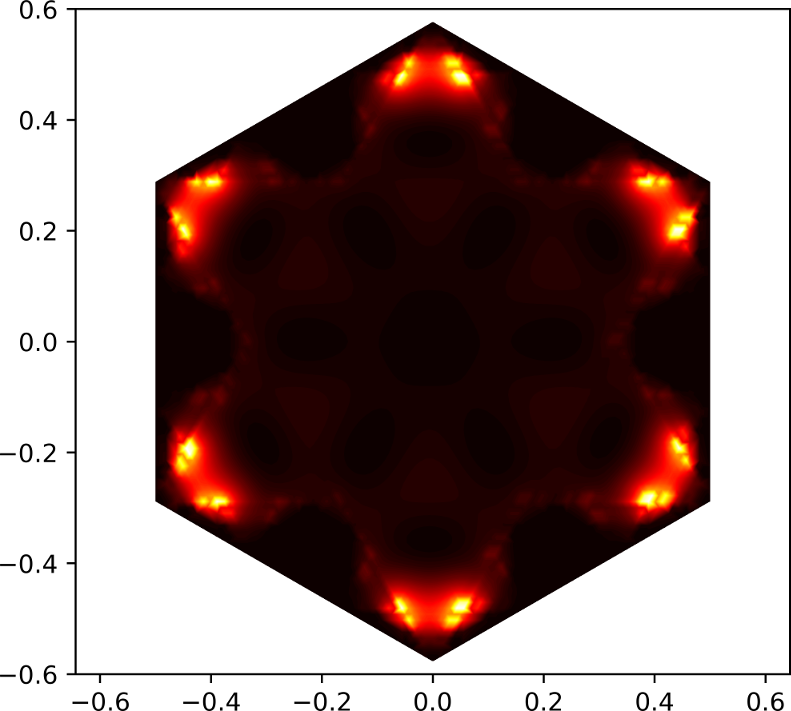}
    }
    \caption{Energy Functions on the C. Snowflake (Lv. 6) with NBC}
    \label{fig:energysum2}
\end{figure}

\newpage


Figure \ref{fig:qsfd} shows three chosen eigenfunctions on a quadratic snowflake with Dirichlet boundary conditions. In (a) and (b) are eigenfunctions from an eigenspace with multiplicity 2; (a) is symmetric under horizontal reflection and skew-symmetric under vertical reflection; (b) skew-symmetric under horizontal reflection and symmetric under vertical reflection. In (c) is an eigenfunction from the following one-dimensional eigenspace that has $D_4$ symmetry.

\begin{figure}[h]
    \centering
    \subfigure[For $\lambda_{14}=244.27$]
    {
        \includegraphics[width=1.5in]{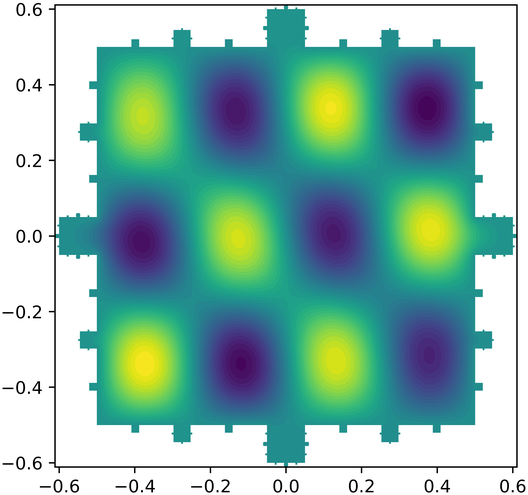}
    }
    \hfill
    \subfigure[For $\lambda_{15}=244.27$]
    {
        \includegraphics[width=1.5in]{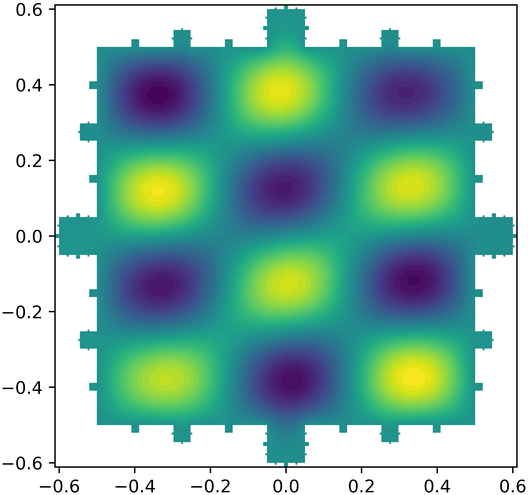}
    }
    \hfill
    \subfigure[For $\lambda_{16}=251.80$]
    {
        \includegraphics[width=1.5in]{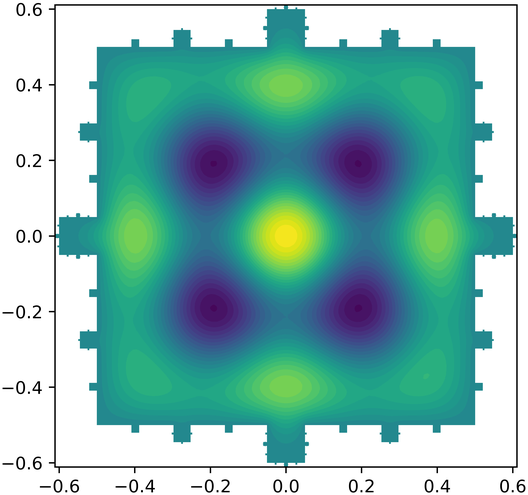}
    }
    \caption{Eigenfunctions on the Quadratic Snowflake (b=0.2, Lv. 4) with Dirichlet BC}
    \label{fig:qsfd}
\end{figure}

Figure \ref{fig:qsfn} shows three chosen eigenfunctions on a quadratic snowflake with Neumann boundary conditions. In (a) and (b) are eigenfunctions from an eigenspace with multiplicity 2; these are symmetric under one diagonal reflection and skew-symmetric under the other. In (c) is an eigenfunction from the following one-dimensional eigenspace that is skew-symmetric under both diagonal reflections and skew-symmetric under horizontal and vertical reflection.

\begin{figure}[h]
    \centering
    \subfigure[For $\lambda_{10}=79.33$]
    {
        \includegraphics[width=1.5in]{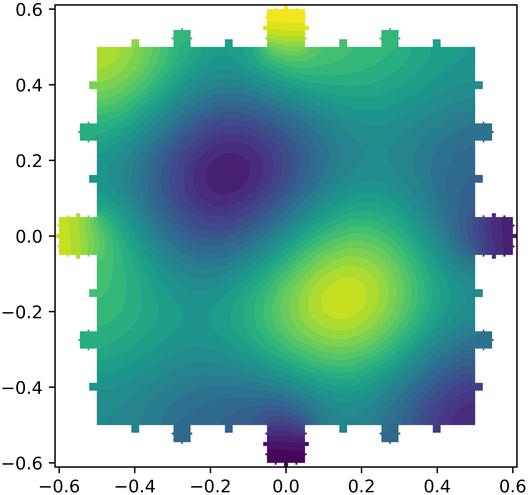}
    }
    \hfill
    \subfigure[For $\lambda_{11}=79.33$]
    {
        \includegraphics[width=1.5in]{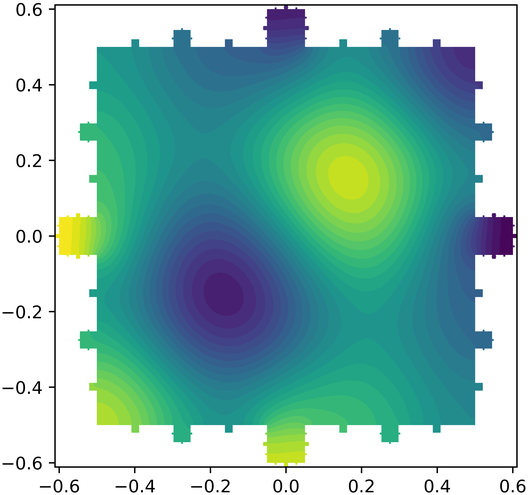}
    }
    \hfill
    \subfigure[For $\lambda_{12}=93.02$]
    {
        \includegraphics[width=1.5in]{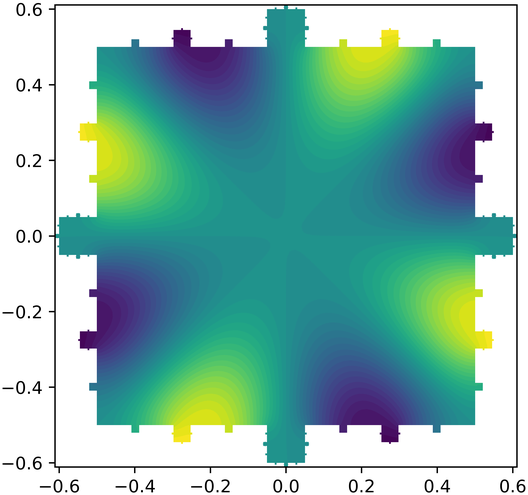}
    }
    \caption{Eigenfunctions on the Quadratic Snowflake (b=0.2, Lv. 4) with Neumann BC}
    \label{fig:qsfn}
\end{figure}

\newpage

\section{Julia Set geometry.}

We will do  a similar analysis on filled Julia sets of quadratic polynomials and compute eigenfunctions with Dirichlet and Neumann boundary conditions.

\begin{definition}
Let $p_c(z)=z^2+c$. The Julia set $J_c$ and the filled Julia set $K_c$ are defined as
\begin{align*}
K_p&=\{z\in\mathbb{C}:\text{ the sequence } z,p_c(z),p^2(z),\dots\text{ is bounded}\}\\
J_p&=\partial K_p
\end{align*}
\end{definition}

\begin{definition}
The Mandelbrot set $M$ is defined as
\begin{equation*}
M=\{c\in\mathbb{C}:0\in K_c\}=\{c\in\mathbb{C}: \text{$K_c$ is connected}\}\text{.}
\end{equation*}
\end{definition}

We compute the area of the Julia sets in the main bulb by just counting the pixels. Figure \ref{fig:area} shows the computed area for a slice at $Im(c)=0$ and for the whole region. It has been shown in \cite{Y} that the area is continuous in each bulb, but not at the junction points. We also show where each point $c$ denotes the area of the corresponding Julia set. The Mandelbrot set is clearly visible.

\vspace{0.5cm}

\begin{figure}[h]
    \centering
    \subfigure[Area for the slice at $Im(c)=0$]
    {
        \includegraphics[width=2.2in]{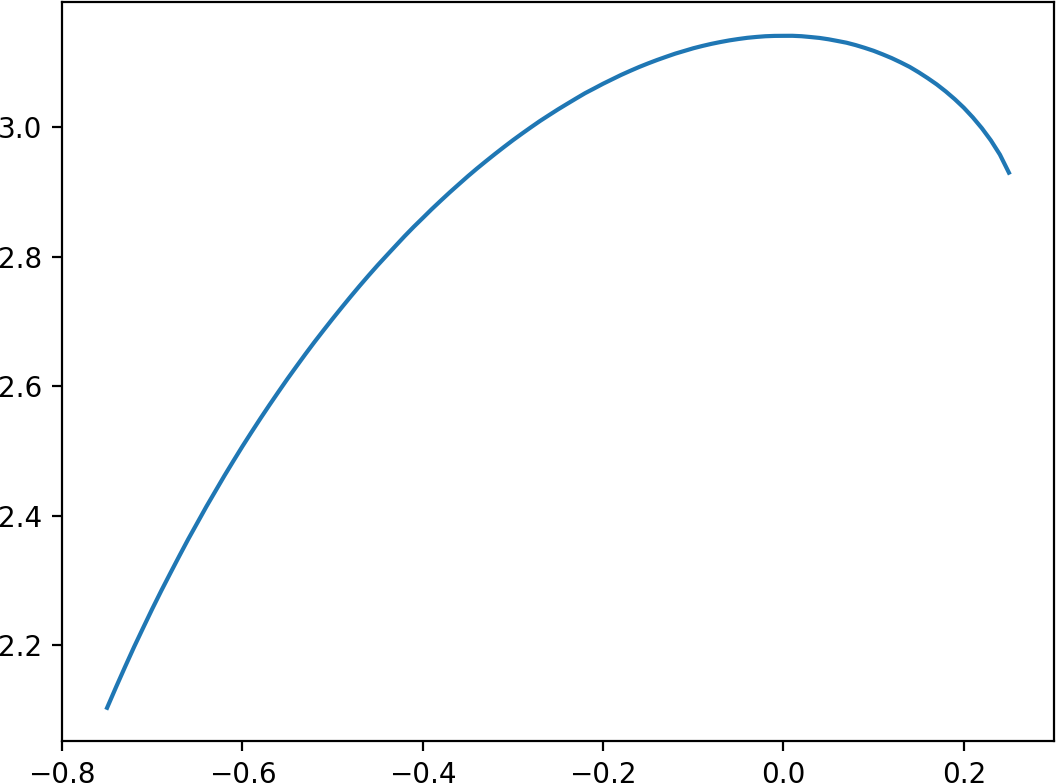}
    }
    \hfill
    \subfigure[Area for the whole region]
    {
        \includegraphics[width=2.2in]{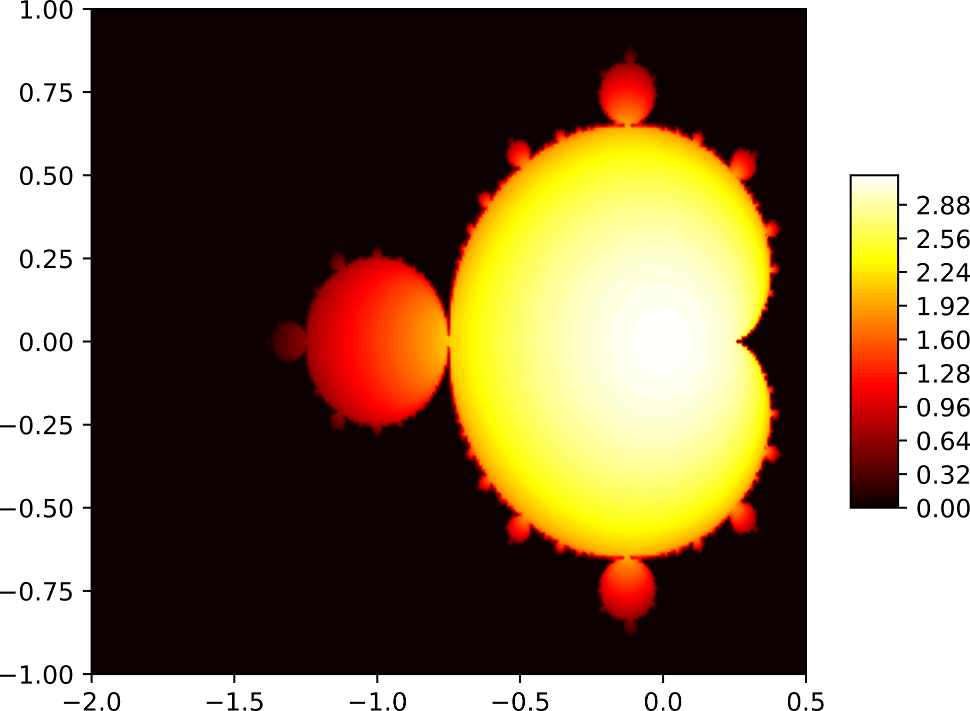}
    }
    \caption{Area of the quadratic Julia sets for $z^2+c$}
    \label{fig:area}
\end{figure}

\clearpage

We also explore the box-counting dimension of each Julia set using 8 different pictures sizes, starting from 640x480 up to 5120x3840, and counting the number of boxes necessary to cover the curve at fixed steps.

Figures \ref{fig:bcd1a} and \ref{fig:bcd1b} show the possible approximation of the box-counting dimension of the quadratic Julia sets for $z^2+c$ and varying $c$. It seems like the larger the used images are, the smoother the curve becomes. The largest images correspond to the thicker red curve.

\begin{figure}[h!]
	\includegraphics[width=5in]{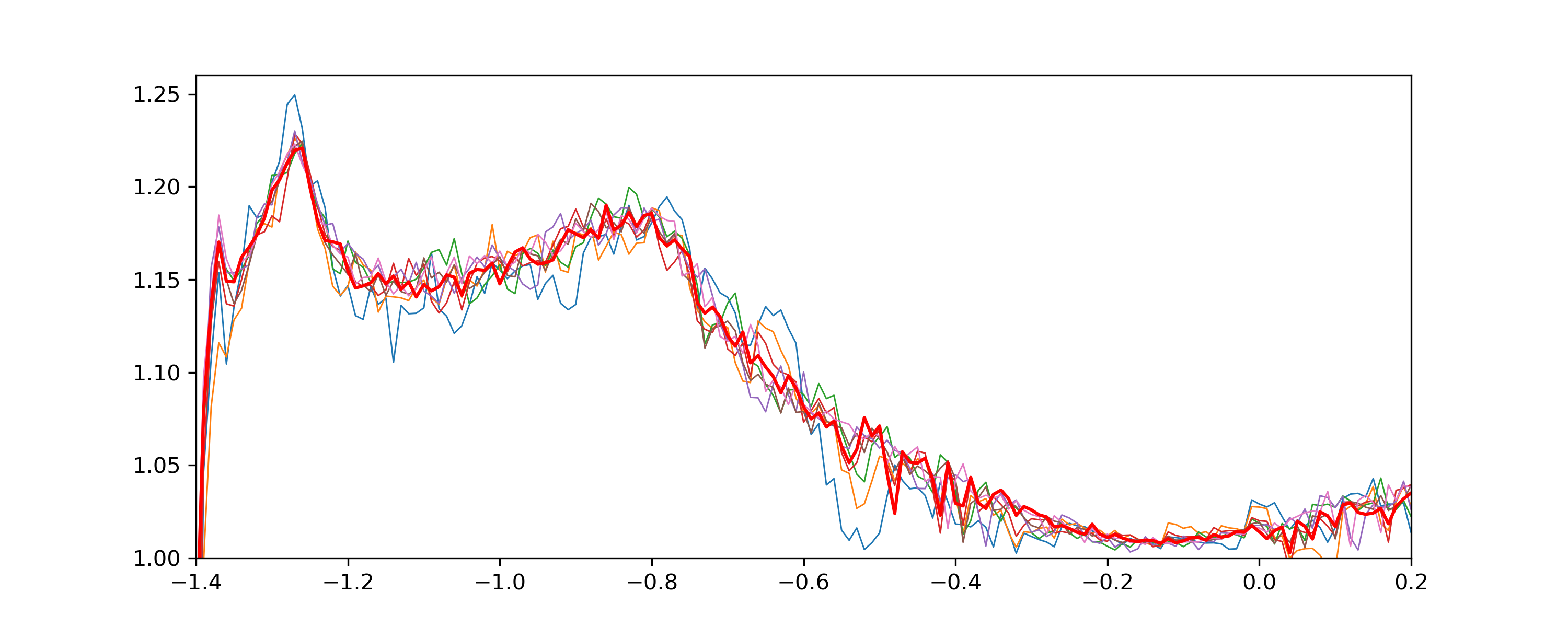}
	    \caption{Dimension for the slice at $Im(c)=0$}
	\label{fig:bcd1a}
\end{figure}

\begin{figure}[h!]
    \includegraphics[width=5in]{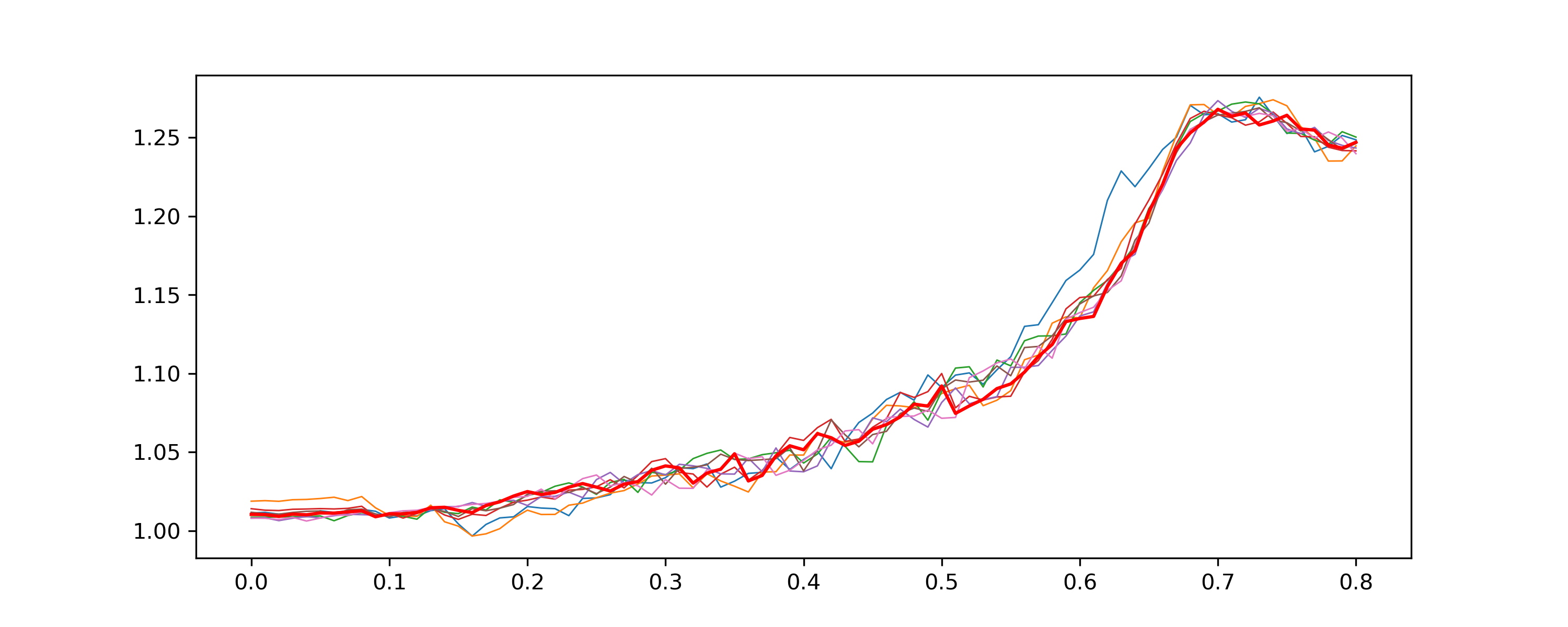}
        \caption{Dimension for the slice at $Re(c)=-0.12$}
    \label{fig:bcd1b}
\end{figure}

\clearpage

Figure \ref{fig:loglog2} shows two chosen log-log plots used to explore two examples of Julia sets with real $c$. The red dots denote the pairs (box size, number of boxed necessary), while the blue line is a linear fit in the log-log plot.

\begin{figure}[h]
    \centering
    \subfigure[For $z^2+0.2$, d = 1.035, error: 0.00088]
    {
        \includegraphics[width=2.3in]{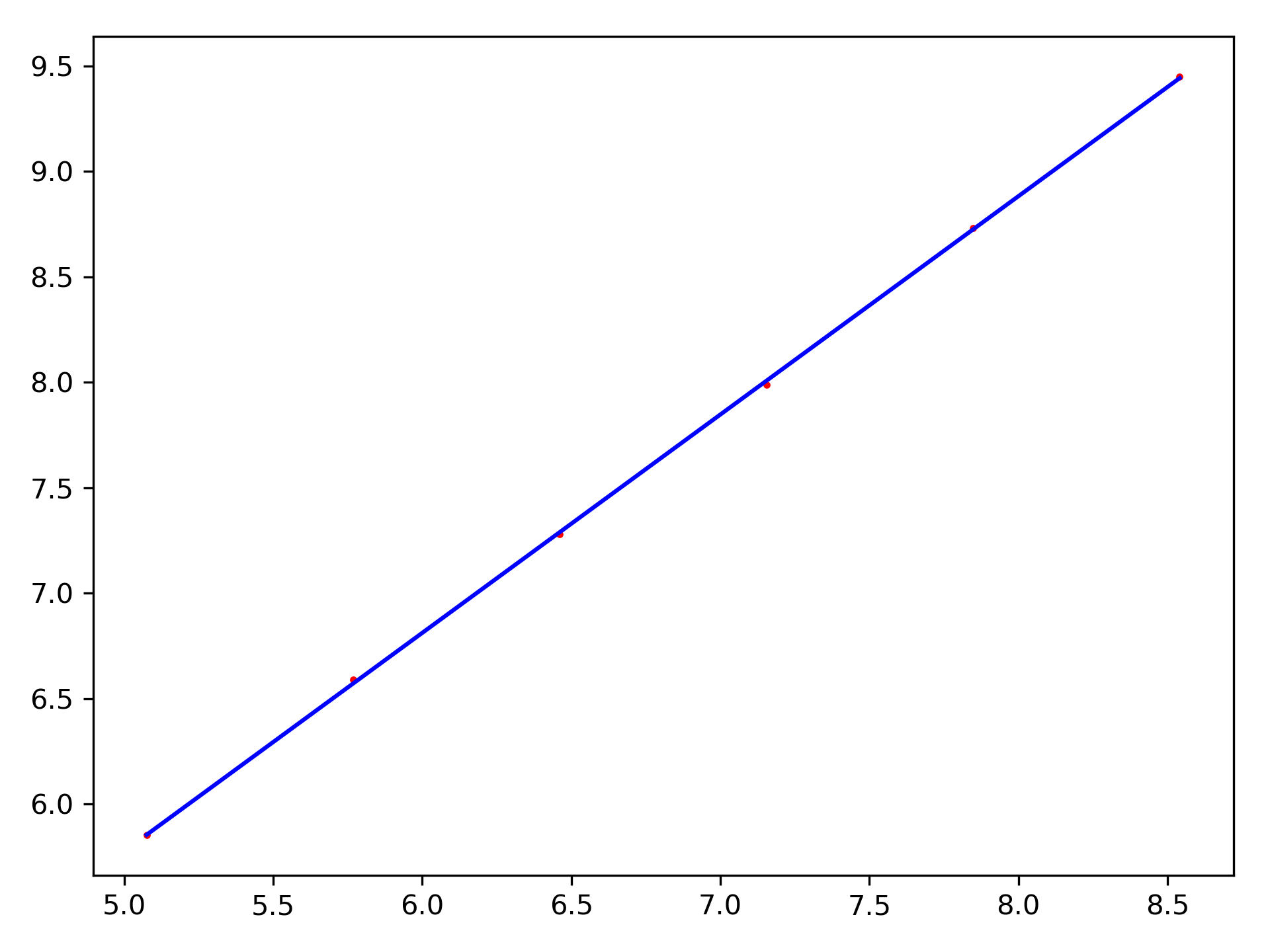}
    }
    \hfill
    \subfigure[For $z^2-0.5$, d = 1.071, error: 0.0029]
    {
        \includegraphics[width=2.3in]{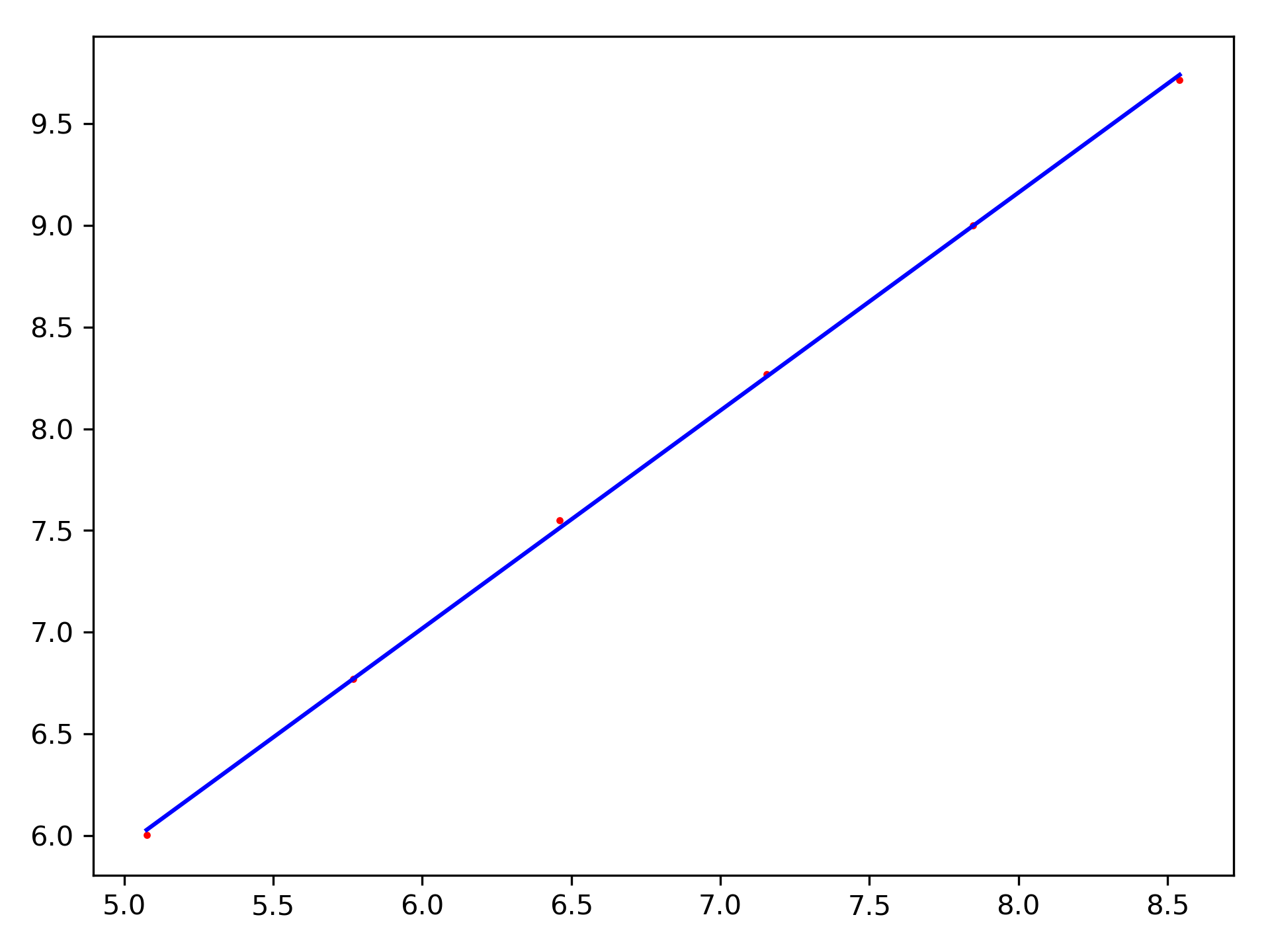}
    }
    \caption{Log-Log plot for the box-counting dimension}
    \label{fig:loglog2}
\end{figure}

In some cases it seems like the limit does not converge, i.e. when we can't find a good linear fit. Figure \ref{fig:errors} shows the error when finding the linear fit. It seems like the larger the used images are, the smoother the error curve becomes. The largest images correspond to the thicker red curve.

\begin{figure}[h]
    \includegraphics[width=5in]{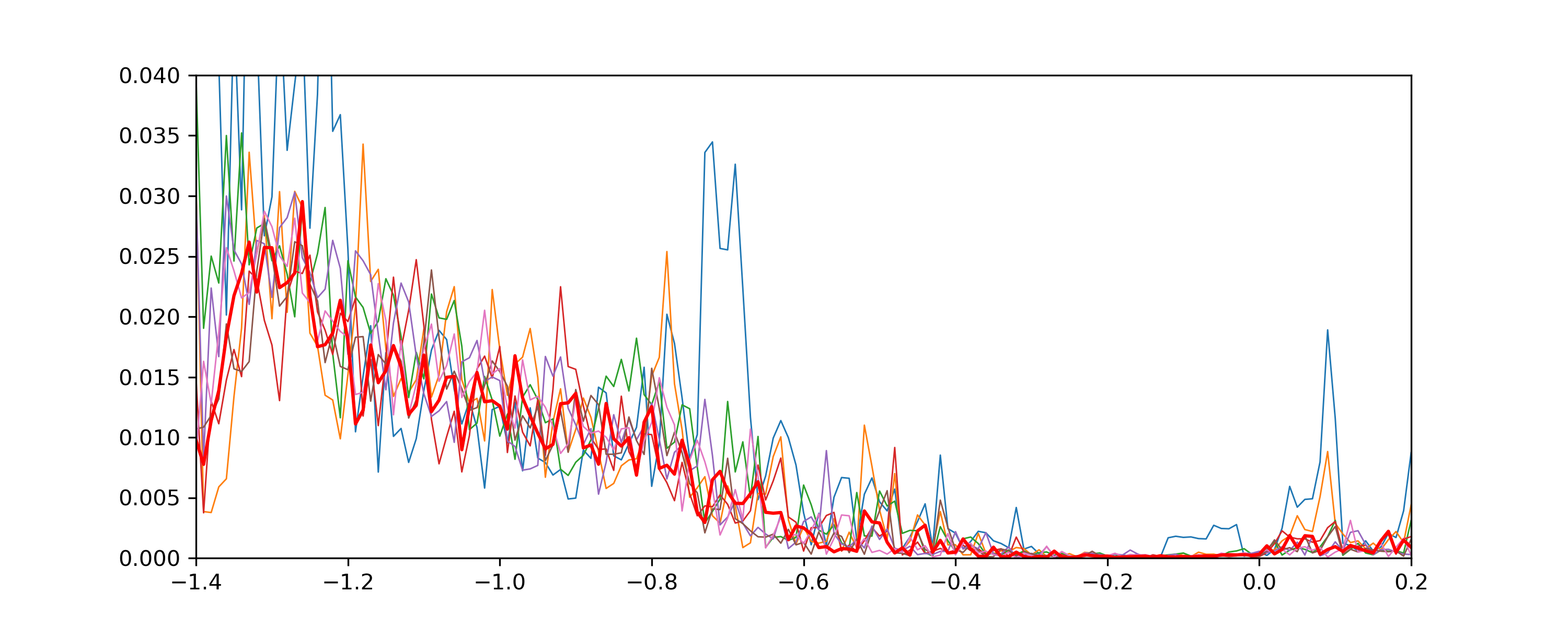}
    \caption{Error when computing the box-counting dimension}
    \label{fig:errors}
\end{figure}

\clearpage


\section{Filled Julia set spectrum.}

We compute the spectrum for the Basilica, the Rabbit and the junctions points from the main bulb to the Basilica bulb and the Rabbit bulb. One way of doing this is by computing the spectrum of these Julia sets for different iterations (and then extrapolating). Figure \ref{fig:iters} show meshes of the basilica Julia set after 10 and 20 iterations.

\begin{figure}[h]
    \centering
    \subfigure[Mesh of the Basilica after 10 iterations]
    {
        \includegraphics[width=2.2in]{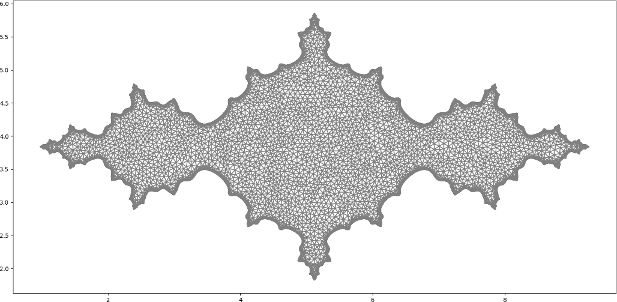}
    }
    \hfill
    \subfigure[Mesh of the Basilica after 20 iterations]
    {
        \includegraphics[width=2.2in]{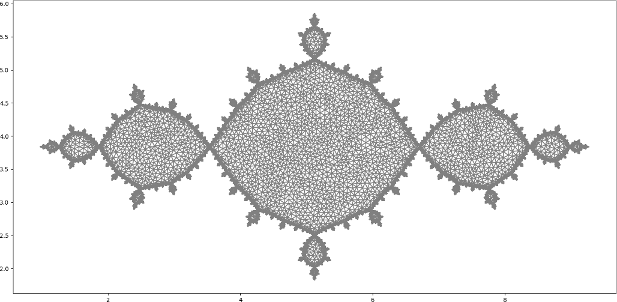}
    }
    \caption{Examples of meshes}
    \label{fig:iters}
\end{figure}

In the Dirichlet case, we will compare the spectrum found using this method with the sorted union of the eigenvalues of the quasicircles (after 170 iterations). For the basilica, we use quasicircle 1, 2 (twice), 3 (twice) and 4 (twice) and for the rabbit quasicircle 1, 2 (twice) and 3 (twice) to find the lower part of the spectrum, since smaller quasicirles don't contribute to the lower part of the spectrum (Figure \ref{fig:qcs}).

\begin{figure}[h]
    \centering
    \subfigure[On the Basilica]
    {
        \includegraphics[width=2.6in]{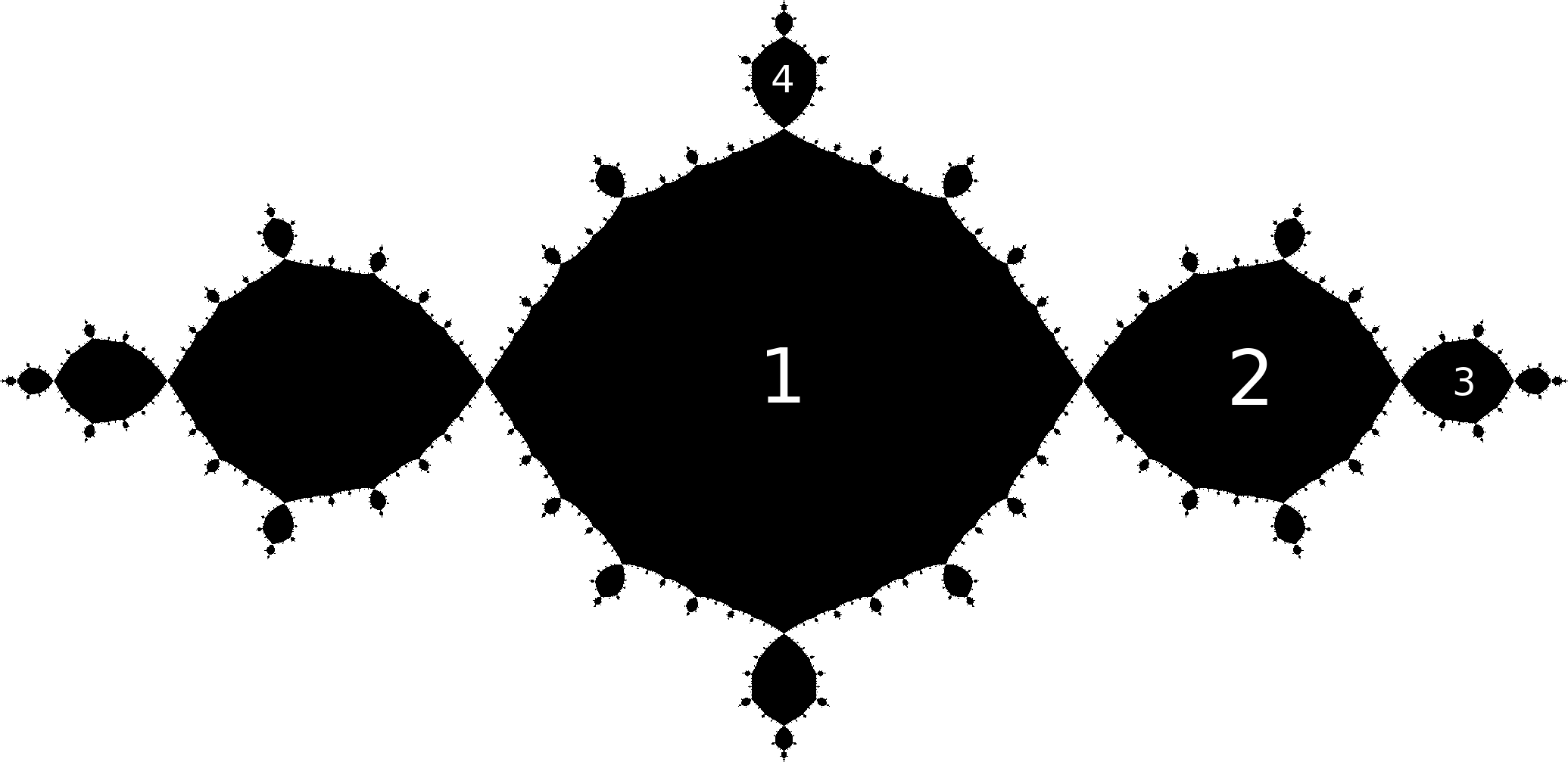}
    }
    \hfill
    \subfigure[On the Rabbit]
    {
        \includegraphics[width=1.7in]{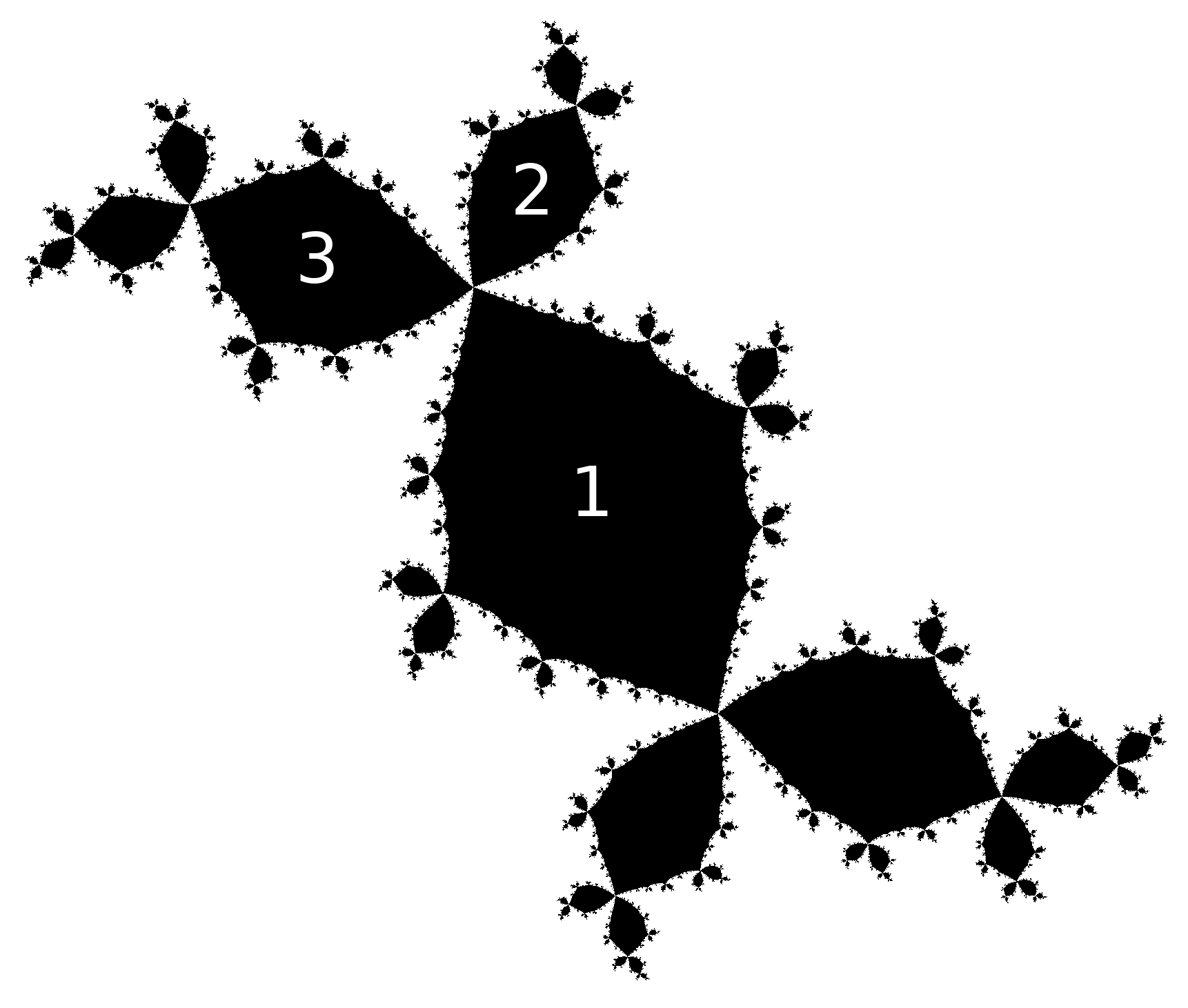}
    }
    \caption{Selected quasicircles}
    \label{fig:qcs}
\end{figure}

\clearpage

Figures \ref{fig:qcbas1d} -- \ref{fig:qcrab3n} shows three chosen eigenfunctions with Dirichlet or Neumann boundary conditions each. We will write "H" or "V" if a quasicircle or an eigenfunction is symmetric under horizontal or vertical reflection and "SH" or "SV" if an eigenfunction is skew-symmetric under horizontal or vertical reflection.

\begin{figure}[h!]
    \centering
    \subfigure[For $\lambda_6=304.88$ (H, V)]
    {
        \includegraphics[width=1.5in]{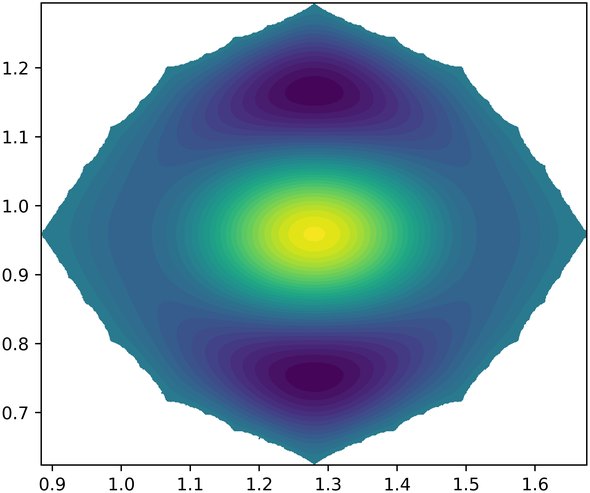}
    }
    \hfill
    \subfigure[For $\lambda_7=363.17$ (H, SV)]
    {
        \includegraphics[width=1.5in]{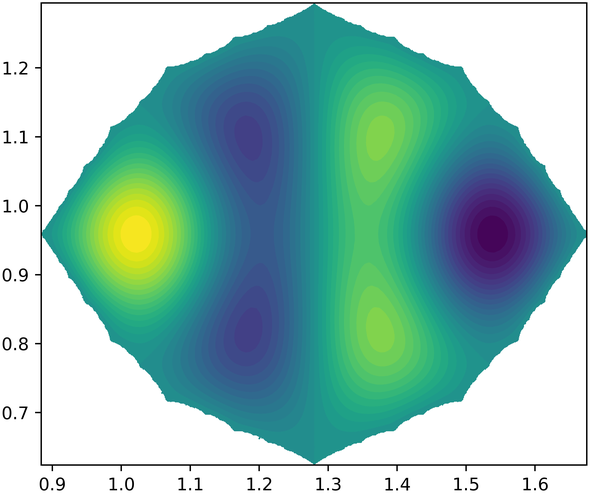}
    }
    \hfill
    \subfigure[For $\lambda_8=392.97$ (SH, V)]
    {
    \includegraphics[width=1.5in]{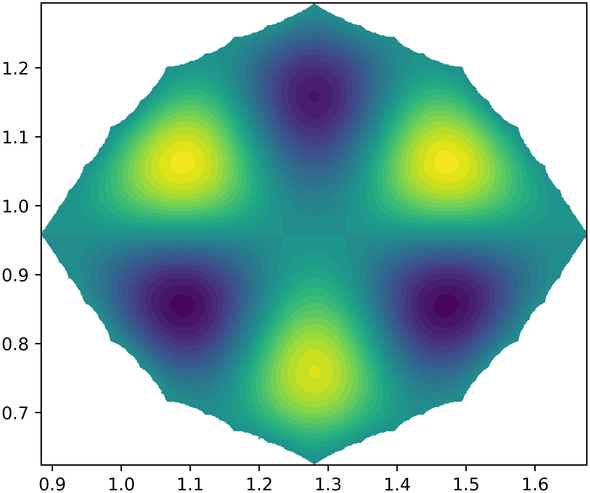}
    }
    \caption{Eigenfunctions on the 1st quasicircle of the Basilica (H, V) with Dirichlet BC}
    \label{fig:qcbas1d}
\end{figure}

\begin{figure}[h!]
    \centering
    \subfigure[For $\lambda_6=104.25$ (H, V)]
    {
        \includegraphics[width=1.5in]{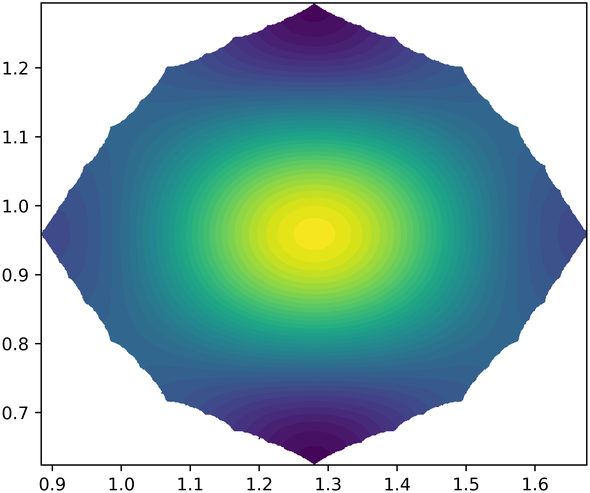}
    }
    \hfill
    \subfigure[For $\lambda_7=149.02$ (H, SV)]
    {
        \includegraphics[width=1.5in]{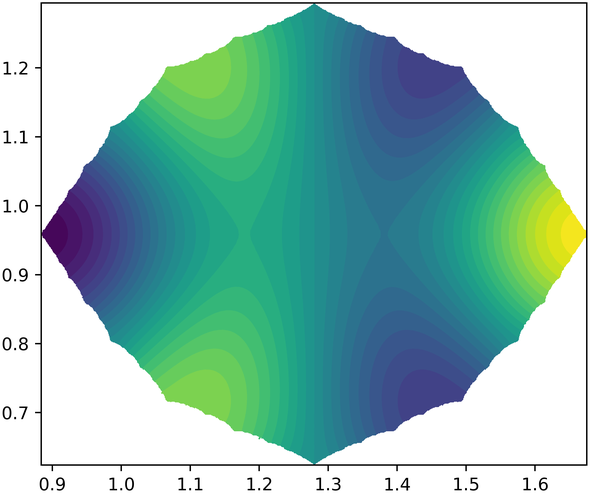}
    }
    \hfill
    \subfigure[For $\lambda_8=168.01$ (SH, V)]
    {
        \includegraphics[width=1.5in]{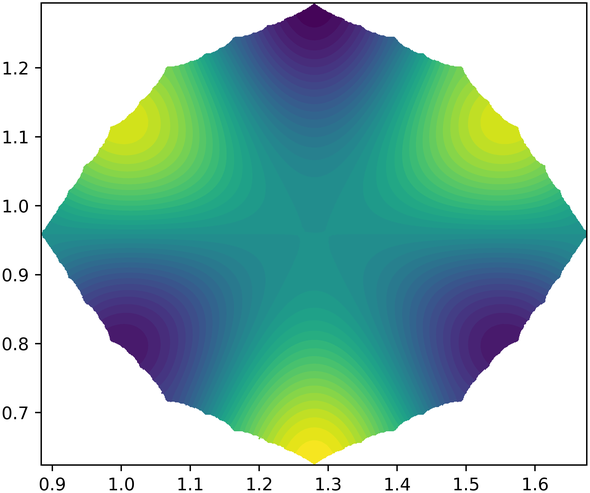}
    }
    \caption{Eigenfunctions on the 1st quasicircle of the Basilica (H, V) with Neumann BC}
    \label{fig:qcbas1n}
\end{figure}

\begin{figure}[h!]
    \centering
    \subfigure[For $\lambda_6=1189.22$ (H)]
    {
        \includegraphics[width=1.5in]{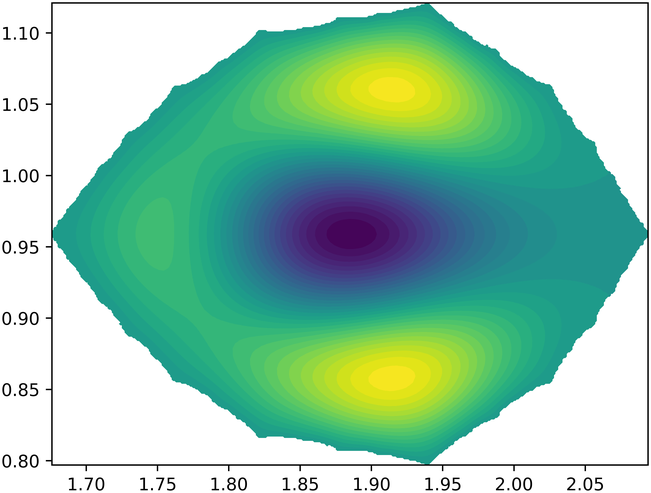}
    }
    \hfill
    \subfigure[For $\lambda_7=1331.34$ (H)]
    {
        \includegraphics[width=1.5in]{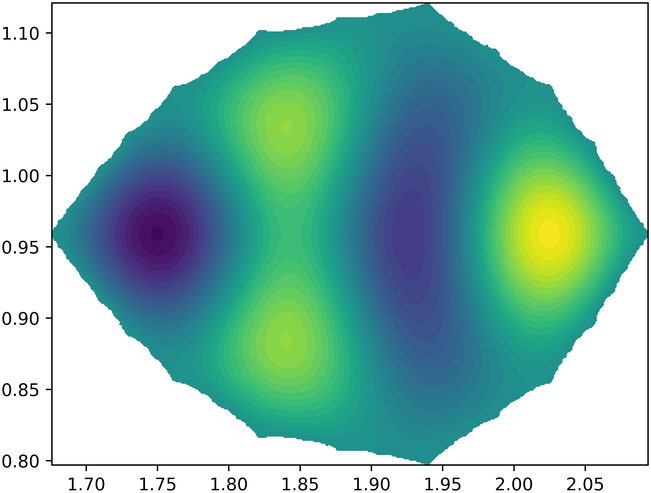}
    }
    \hfill
    \subfigure[For $\lambda_8=1495.42$ (SH)]
    {
        \includegraphics[width=1.5in]{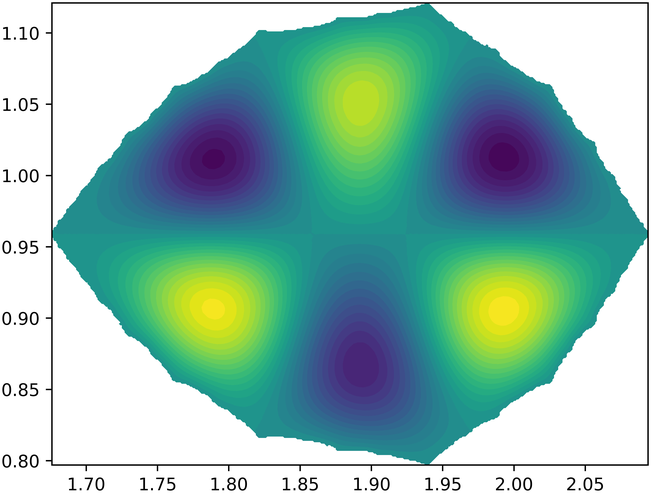}
    }
    \caption{Eigenfunctions on the 2nd quasicircle of the Basilica (H) with Dirichlet BC}
    \label{fig:qcbas2d}
\end{figure}

\clearpage

\begin{figure}[h!]
    \centering
    \subfigure[For $\lambda_6=511.08$ (H)]
    {
        \includegraphics[width=1.5in]{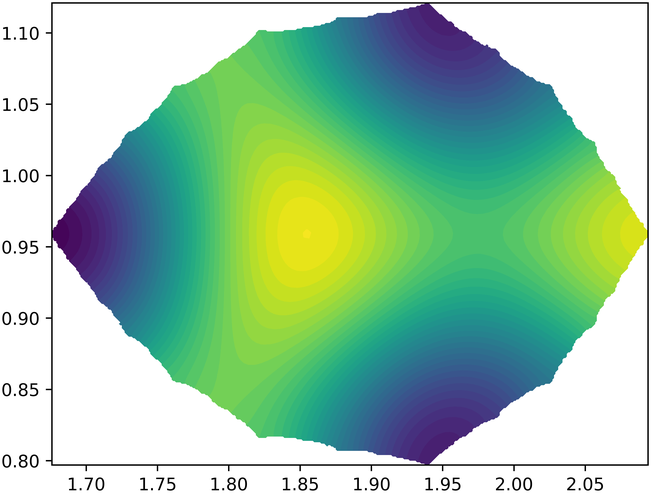}
    }
    \hfill
    \subfigure[For $\lambda_7=571.49$ (H)]
    {
        \includegraphics[width=1.5in]{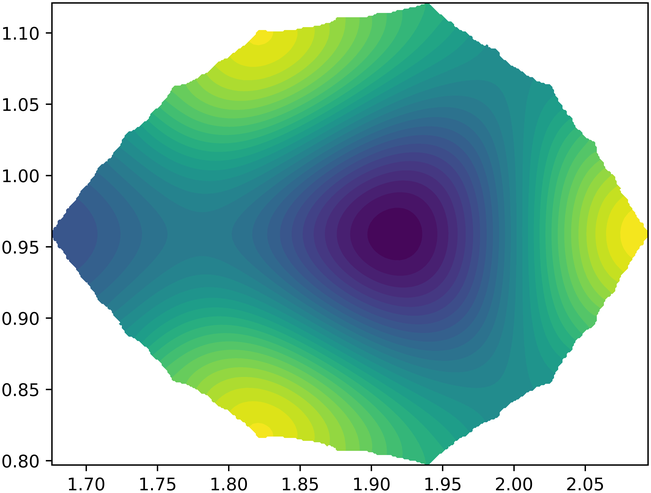}
    }
    \hfill
    \subfigure[For $\lambda_8=655.85$ (SH)]
    {
        \includegraphics[width=1.5in]{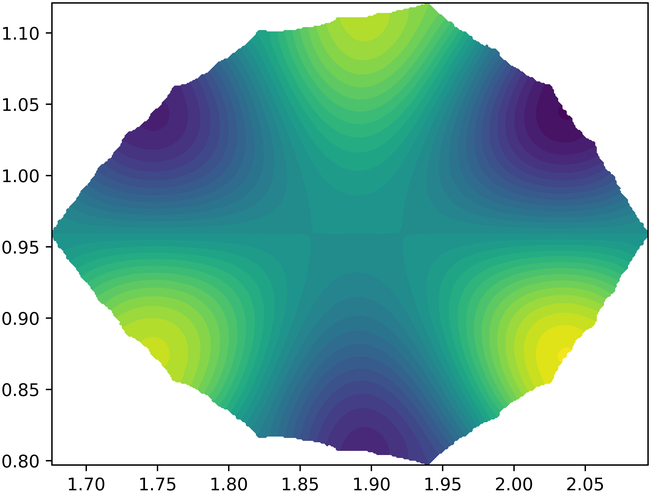}
    }
    \caption{Eigenfunctions on the 2nd quasicircle of the Basilica (H) with Neumann BC}
    \label{fig:qcbas2n}
\end{figure}

\begin{figure}[h!]
    \centering
    \subfigure[For $\lambda_6=559.22$ (H, V)]
    {
        \includegraphics[width=1.3in]{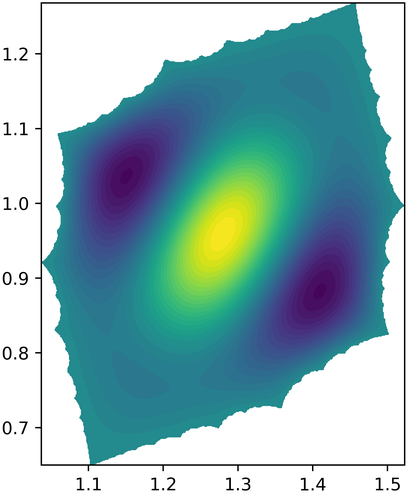}
    }
    \hfill
    \subfigure[For $\lambda_7=564.00$ (H, SV)]
    {
        \includegraphics[width=1.3in]{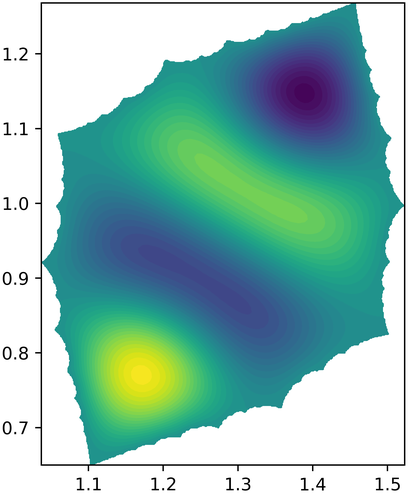}
    }
    \hfill
    \subfigure[For $\lambda_8=679.60$ (SH, V)]
    {
        \includegraphics[width=1.3in]{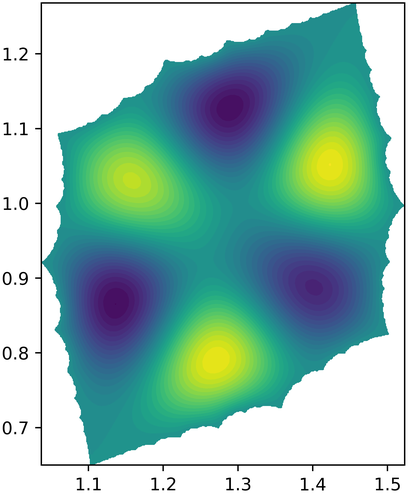}
    }
    \caption{Eigenfunctions on the 1st quasicircle of the Rabbit (H, V) with Dirichlet BC}
    \label{fig:qcrab1d}
\end{figure}

\begin{figure}[h!]
    \centering
    \subfigure[For $\lambda_6=205.07$ (H, SV)]
    {
        \includegraphics[width=1.3in]{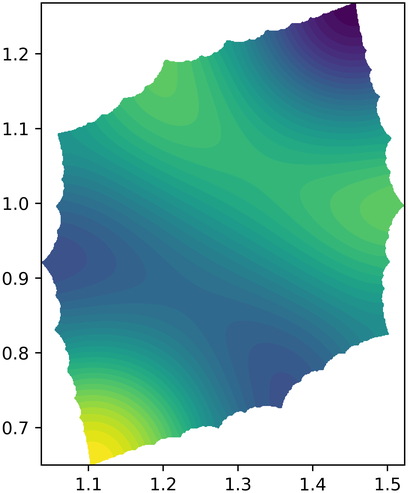}
    }
    \hfill
    \subfigure[For $\lambda_7=238.94$ (H, V)]
    {
        \includegraphics[width=1.3in]{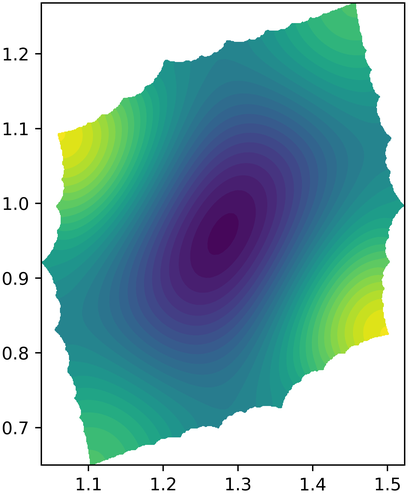}
    }
    \hfill
    \subfigure[For $\lambda_8=280.59$ (SH, V)]
    {
        \includegraphics[width=1.3in]{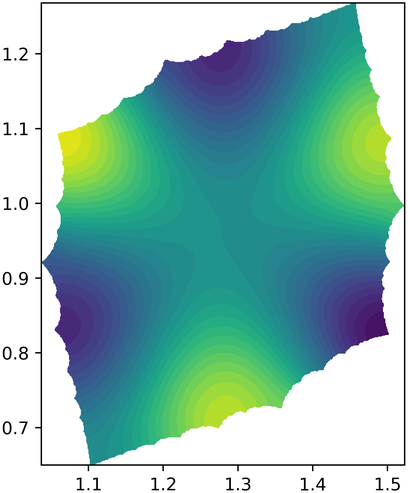}
    }
    \caption{Eigenfunctions on the 1st quasicircle of the Rabbit (H, V) with Neumann BC}
    \label{fig:qcrab1n}
\end{figure}

\clearpage

\begin{figure}[h!]
    \centering
    \subfigure[For $\lambda_6=1542.19$]
    {
        \includegraphics[width=1.5in]{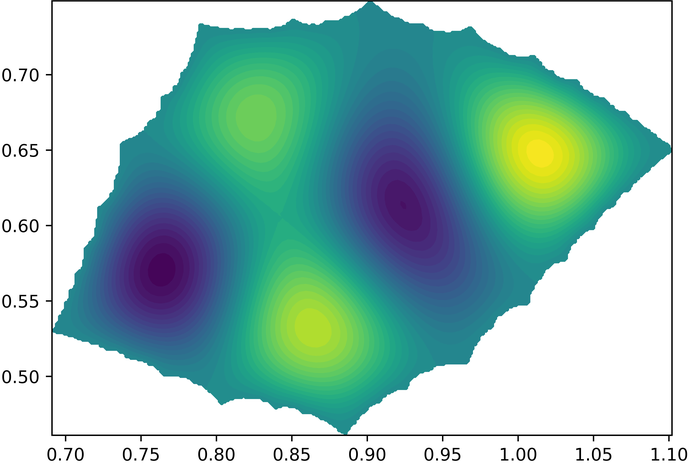}
    }
    \hfill
    \subfigure[For $\lambda_7=1674.14$]
    {
        \includegraphics[width=1.5in]{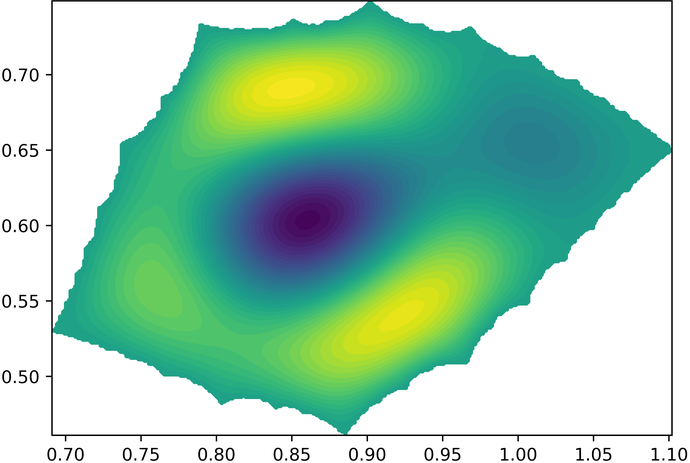}
    }
    \hfill
    \subfigure[For $\lambda_8=1939.44$]
    {
        \includegraphics[width=1.5in]{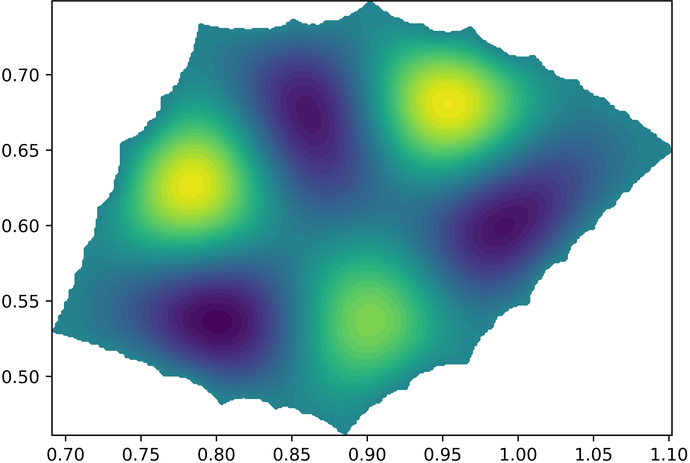}
    }
    \caption{Eigenfunctions on the 3rd quasicircle of the Rabbit with Dirichlet BC}
    \label{fig:qcrab3d}
\end{figure}

\vspace{-0.5cm}

\begin{figure}[h!]
    \centering
    \subfigure[For $\lambda_6=555.02$]
    {
        \includegraphics[width=1.5in]{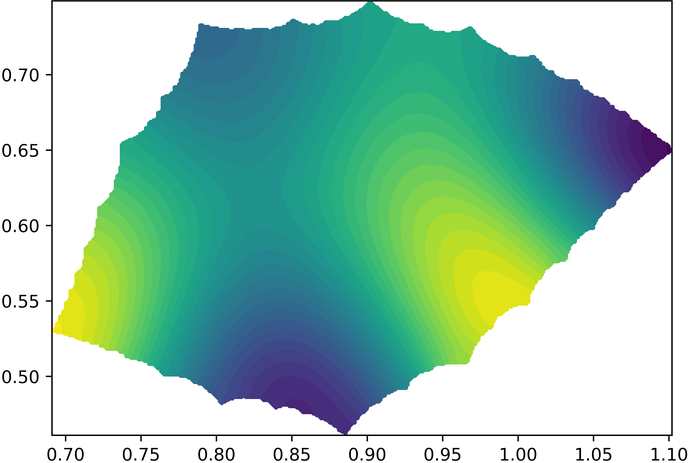}
    }
    \hfill
    \subfigure[For $\lambda_7=732.63$]
    {
        \includegraphics[width=1.5in]{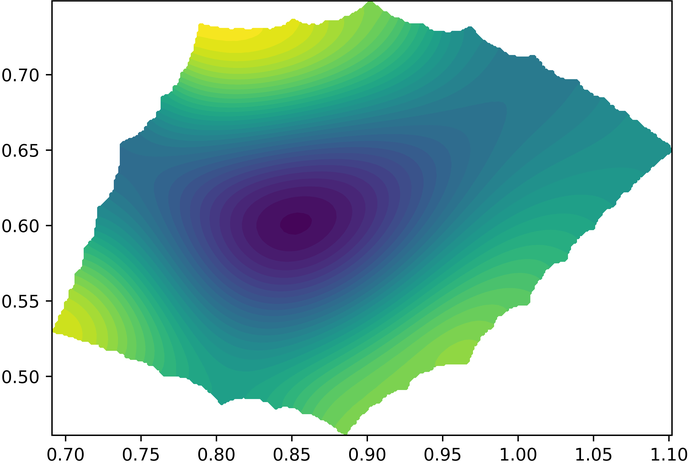}
    }
    \hfill
    \subfigure[For $\lambda_8=825.79$]
    {
        \includegraphics[width=1.5in]{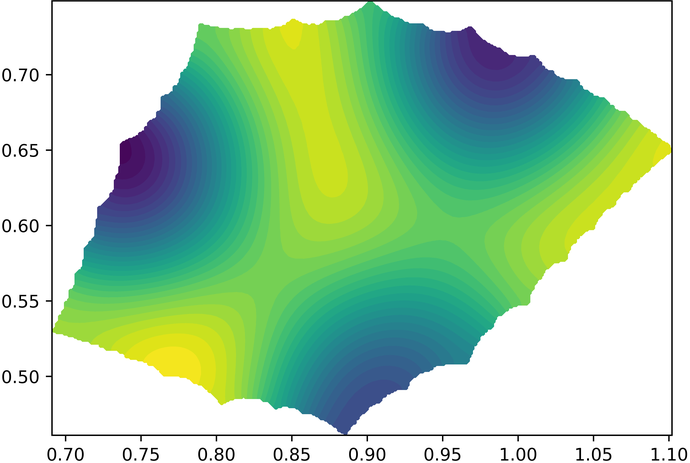}
    }
    \caption{Eigenfunctions on the 3rd quasicircle of the Rabbit with Neumann BC}
    \label{fig:qcrab3n}
\end{figure}

Tables 1 and 2 show the beginning of the Dirichlet spectrum computed using the quasicircles and after 20 iterations for the Basilica and the Rabbit. The quasicircle that the eigenvalue belongs to (from Fig. \ref{fig:qcs}) is shown in brackets.

\begin{table}[h!]
\centering
\begin{minipage}{0.48\textwidth}
\centering
\begin{tabular}{|c|c|}
\hline
Union of QCs & After 10 it. \\ \hline
56.01 (1)             & 55.93               \\
131.96 (1)            & 132.68              \\
151.74 (1)            & 152.99              \\
213.40 (2)            & 214.33              \\
213.40 (2)            & 214.35              \\
265.00 (1)            & 235.90              \\
304.88 (1)            & 269.83              \\
363.17 (1)            & 312.11              \\
392.97 (1)            & 371.67              \\
467.87 (1)            & 403.65              \\
487.42 (2)            & 487.55              \\
487.42 (2)            & 499.71              \\
507.85 (1)            & 501.36              \\
514.55 (1)            & 529.97              \\
556.89 (1)            & 531.92              \\
592.41 (2)            & 581.48              \\
92.41 (2)            & 617.26              \\ \hline
\end{tabular}
\vspace{0.2cm}
\caption{Basilica}
\end{minipage}
\begin{minipage}{0.48\textwidth}
\centering
\begin{tabular}{|c|c|}
\hline
Union of QCs & After 10 it. \\ \hline
98.59 (1)             & 94.61               \\
218.01 (1)            & 208.98              \\
279.41 (1)            & 270.26              \\
283.77 (3)            & 270.54              \\
283.77 (3)            & 273.48              \\
370.02 (1)            & 352.79              \\
472.78 (1)            & 469.07              \\
559.22 (1)            & 528.39              \\
564.00 (1)            & 551.48              \\
600.25 (2)            & 556.50              \\
600.25 (2)            & 558.21              \\
607.97 (3)            & 574.74              \\
607.97 (3)            & 575.97              \\
679.60 (1)            & 678.63              \\
790.59 (1)            & 728.05              \\
820.88 (3)            & 820.29              \\
820.88 (3)            & 825.30              \\ \hline
\end{tabular}
\vspace{0.2cm}
\caption{Rabbit}
\end{minipage}
\end{table}

\clearpage

Figures \ref{fig:basqc_d} and \ref{fig:rabqc_d} show the beginning of the spectrum using different iterations and using the spectra of the few largest quasicircles for the basilica and the rabbit with Dirichlet boundary conditions. The approximation works really well, especially at the beginning of the spectrum. It shows that after only 20 iterations the Dirichlet spectrum is already approximated well. The spectrum computed using quasicircles is sloping downwards, because we are missing eigenvalues from other smaller quasicircles that are not taken into account.


\vspace{0.2cm}

\begin{figure}[h]
	\includegraphics[width=3.5in]{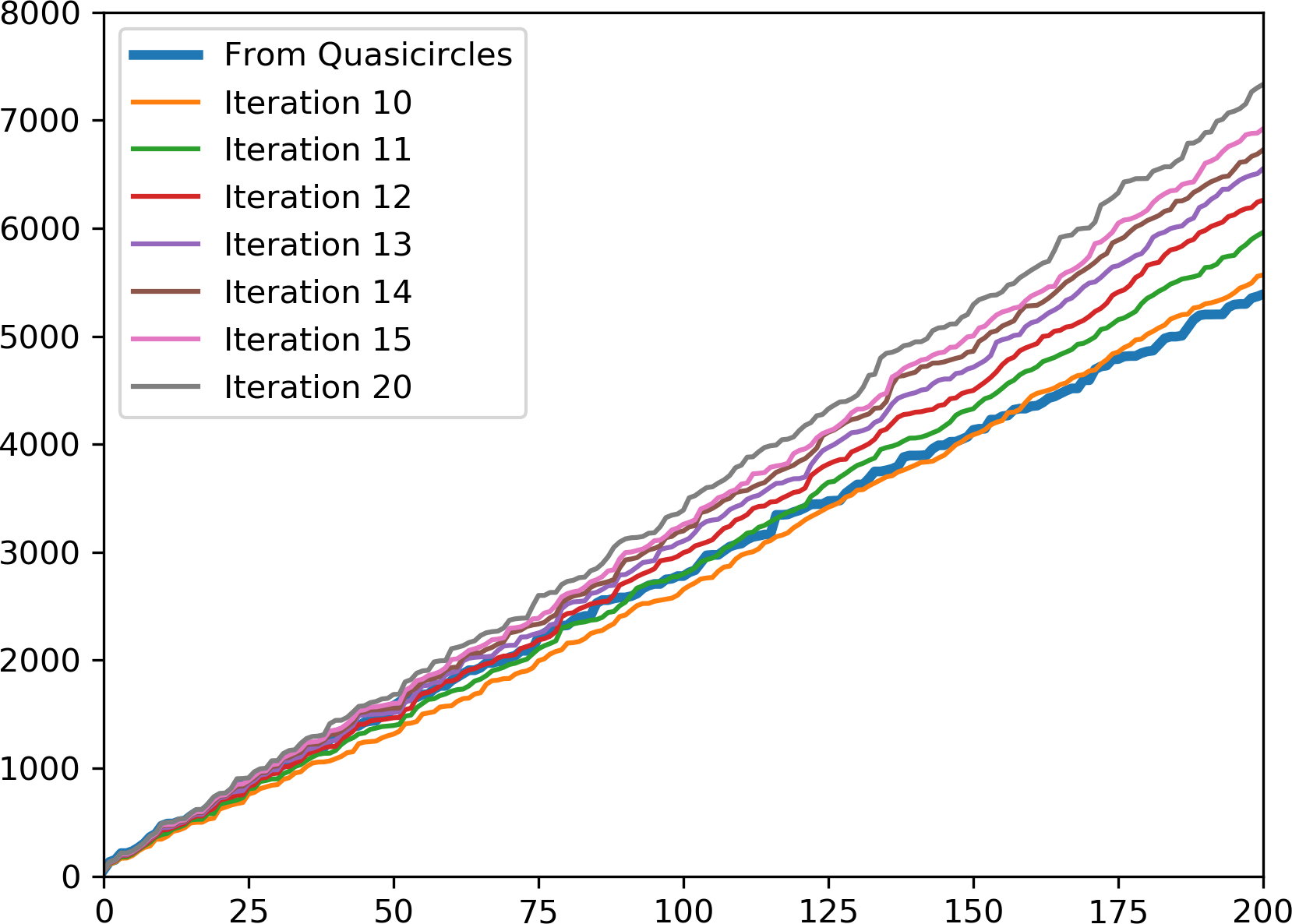}
	    \caption{Spectrum of the Basilica Julia set with Dirichlet BC}
	\label{fig:basqc_d}
\end{figure}

\begin{figure}[h]
	\includegraphics[width=3.5in]{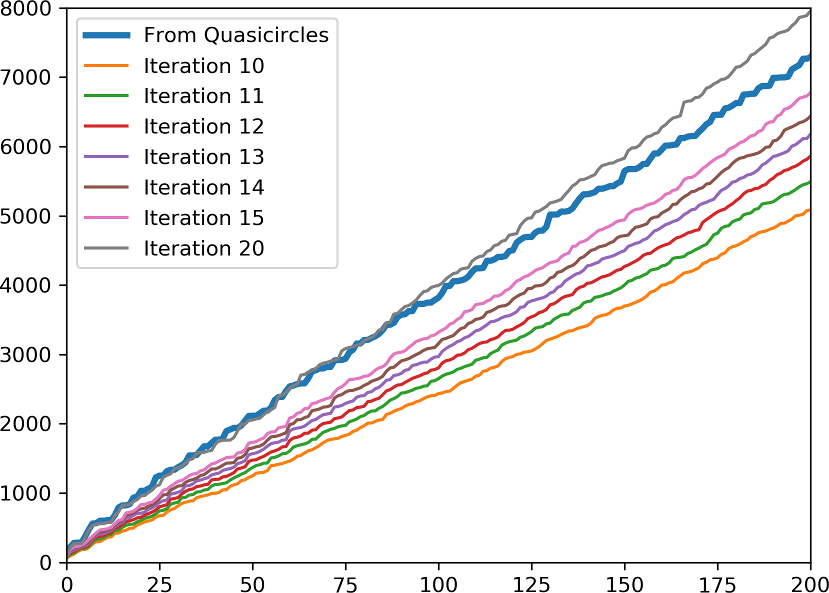}
	    \caption{Spectrum of the Rabbit Julia set with Dirichlet BC}
	\label{fig:rabqc_d}
\end{figure}

\clearpage

In the Neumann case, we can't use the quasicircle spectra, but we can still show the spectrum after different iterations (Fig. \ref{fig:basqc_n}, \ref{fig:rabqc_n}).


\begin{figure}[h]
	\includegraphics[width=3.5in]{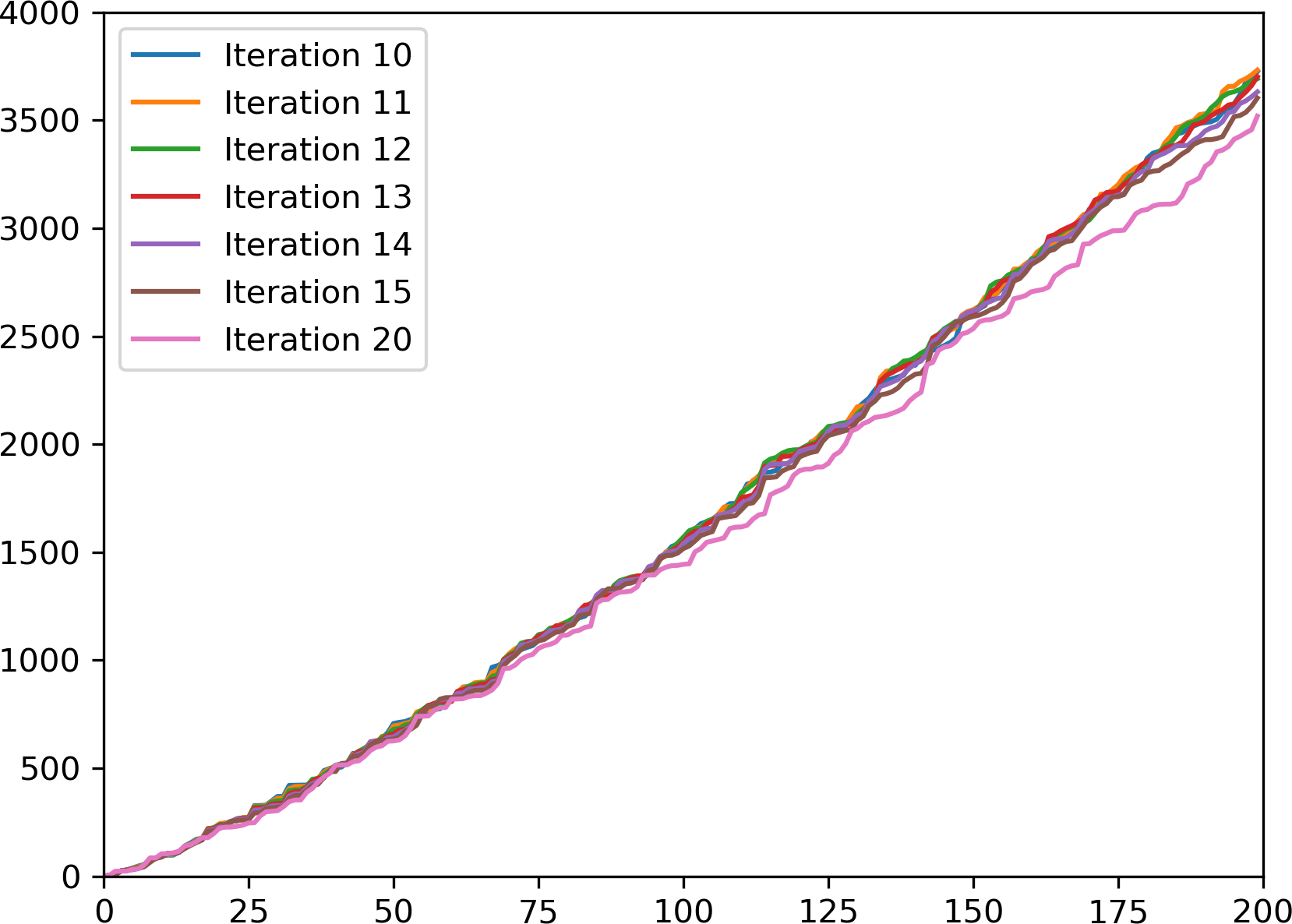}
	    \caption{Spectrum of the Basilica Julia set with Neumann BC}
	\label{fig:basqc_n}
\end{figure}

\begin{figure}[h]
	\includegraphics[width=3.5in]{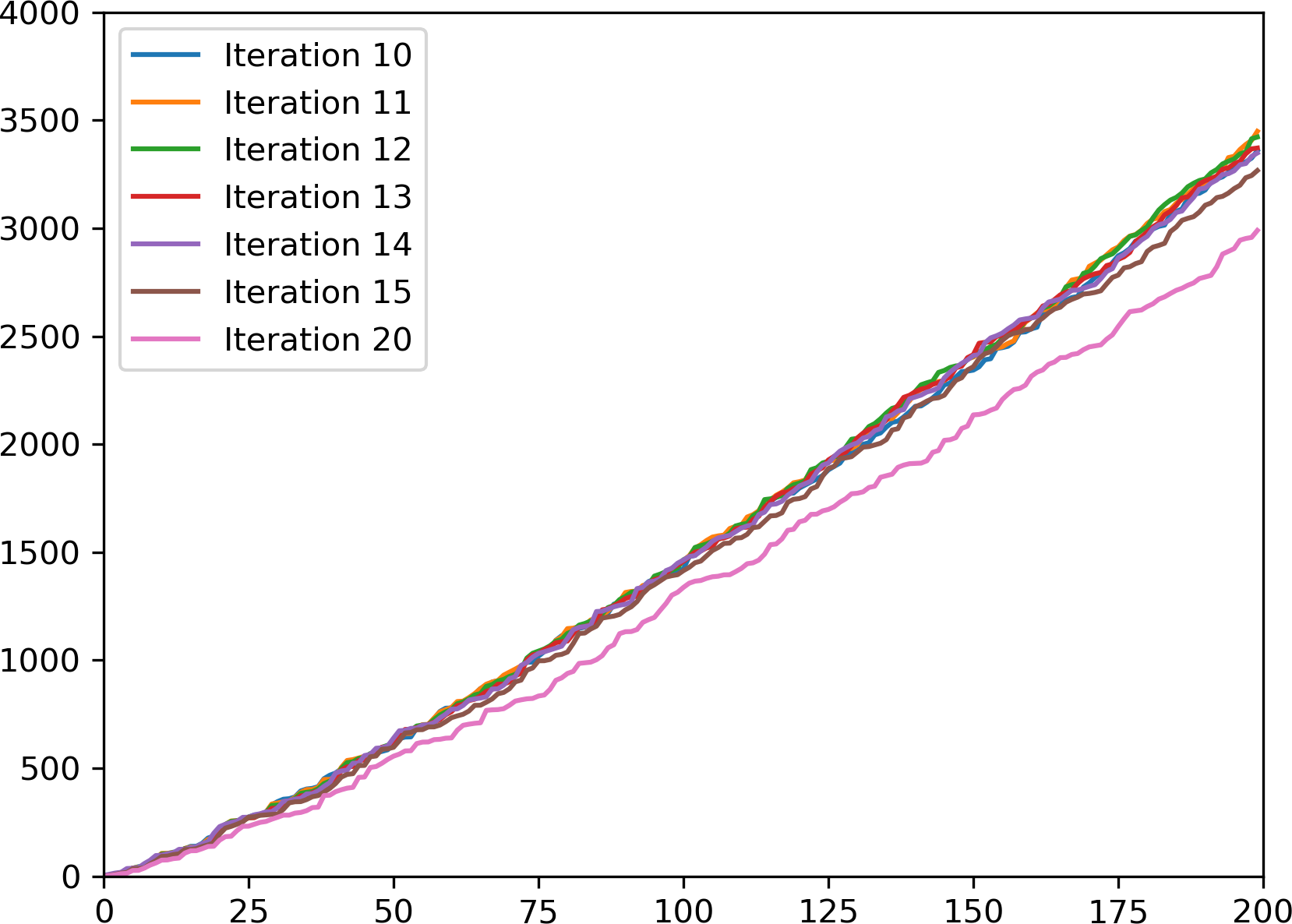}
	    \caption{Spectrum of the Rabbit Julia set with Neumann BC}
	\label{fig:rabqc_n}
\end{figure}

\clearpage

We are also interested in the behaviour of the spectrum of a family of Julia sets to $z^2+c$, $Im(c)=0$. Figure \ref{fig:evslice1} shows the first 100 eigenvalues, and it seems like there won't overlap. Note that in the chosen interval for $c$, the area of the corresponding Julia set increases, as seen in Figure \ref{fig:area} (a), and so does the box-counting dimension, if our conjecture is correct.

\begin{figure}[h]
	\includegraphics[width=\linewidth]{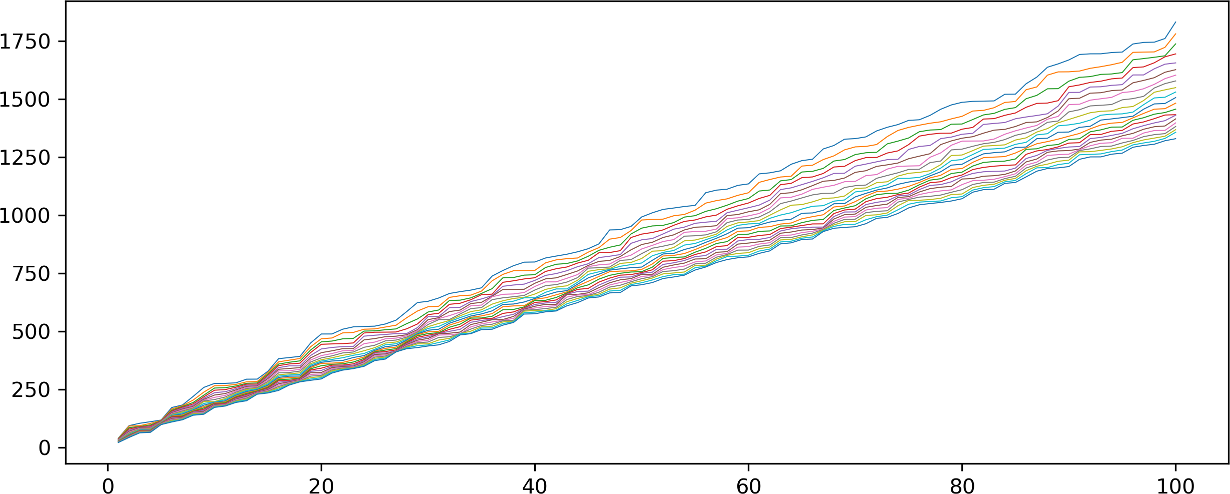}
	    \caption{Spectrum of 21 Julia sets to $z^2+c$,\hspace{\textwidth}$c\in \{-0.74, -0.73, ..., -0.54\}$, Dirichlet BC}
	\label{fig:evslice1}
\end{figure}

However, if we widen the iterval of the $c$-values, the spectra will intersect, as seen in Figure \ref{fig:evslice2}.

\begin{figure}[h]
	\includegraphics[width=\linewidth]{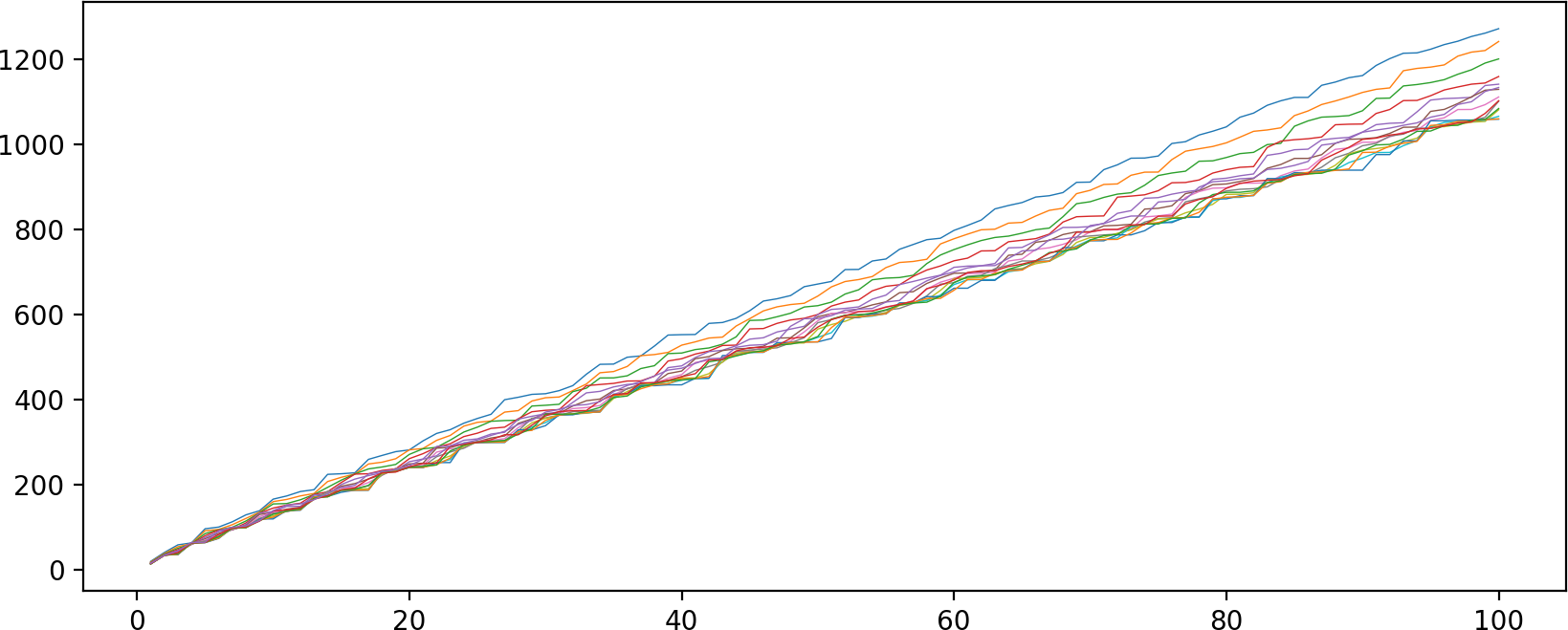}
	    \caption{Spectrum of 15 Julia sets to $z^2+c$,\hspace{\textwidth}$c\in \{-0.5, -0.45, ..., 0.2\}$, Dirichlet BC}
	\label{fig:evslice2}
\end{figure}


\clearpage

In Figures \ref{fig:cfjd1} and \ref{fig:cfjn1} we show similar graphs of the counting function $N(t)$ and the differences $D_1(t)$, $D_2(t)$ for a selected Julia set with $A=3.0305$, $d=1.0351$.

\begin{figure}[h!]
    \centering
    \subfigure[$N(t)$ (blue) and $\frac{A}{4\pi}t$ (orange)]
    {
        \includegraphics[width=1.75in]{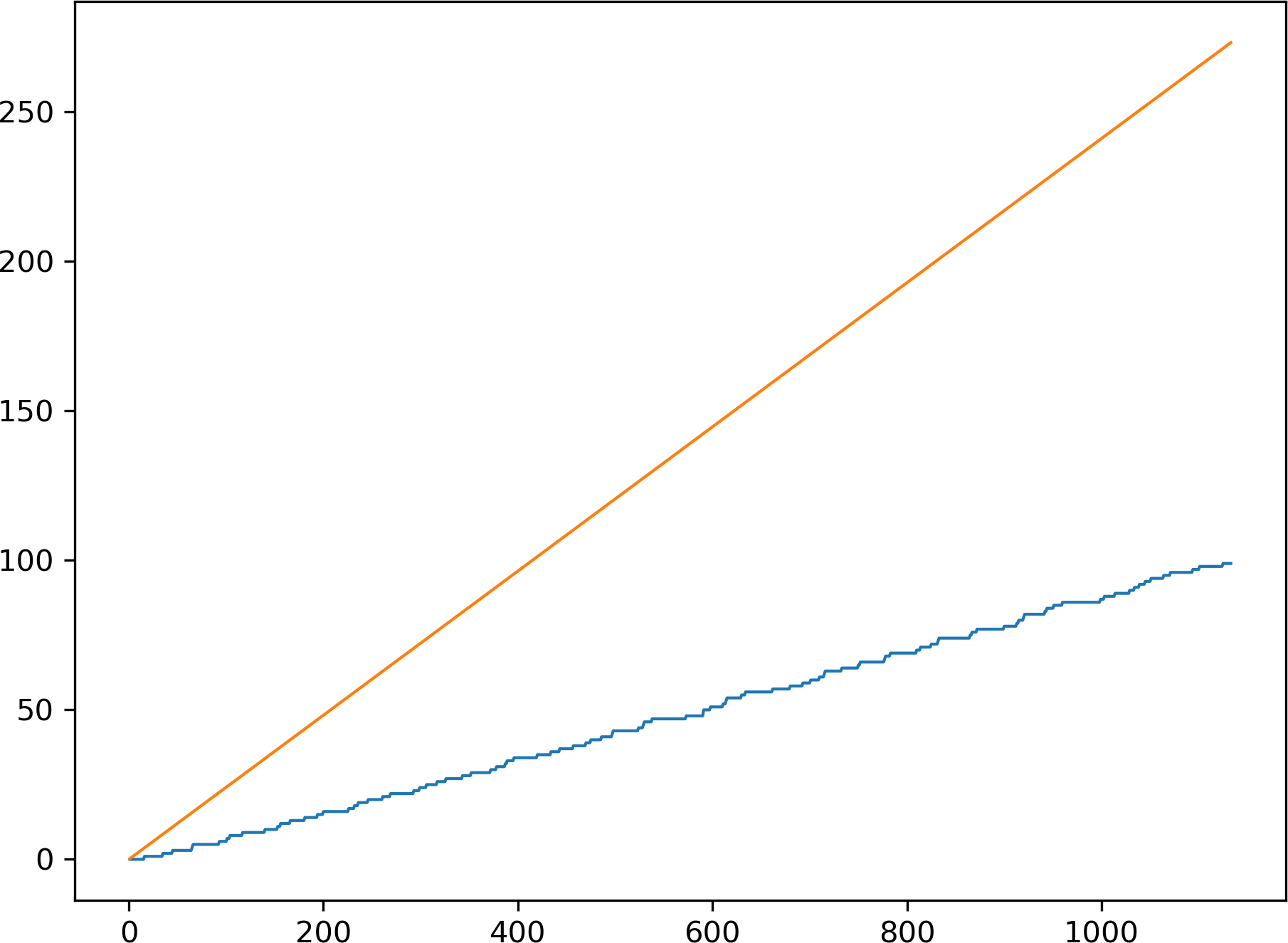}
    }
    \hfill
    \subfigure[$D_1(t)$]
    {
        \includegraphics[width=1.75in]{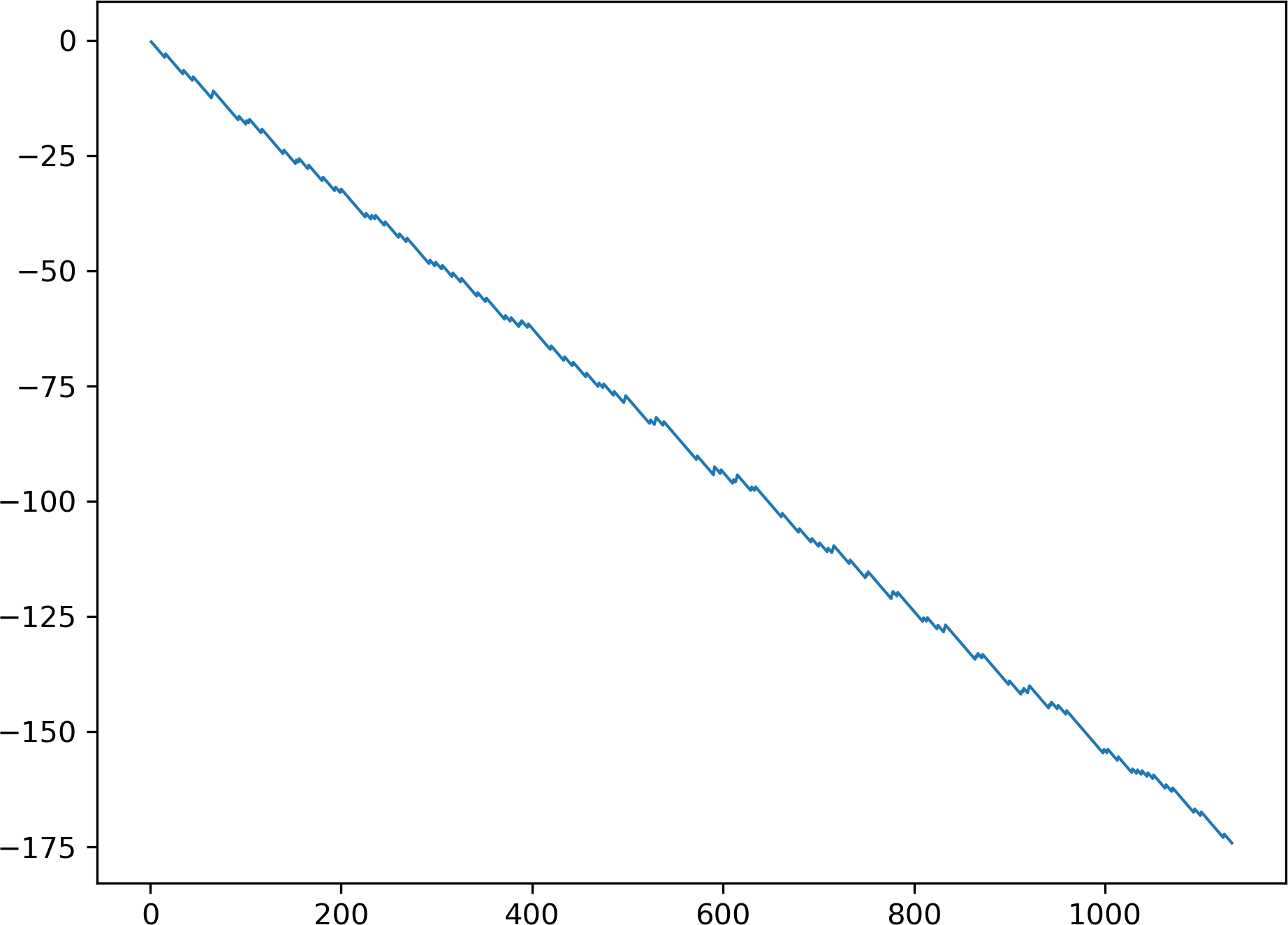}
    }
    \subfigure[$D_2(t)$]
    {
        \includegraphics[width=1.75in]{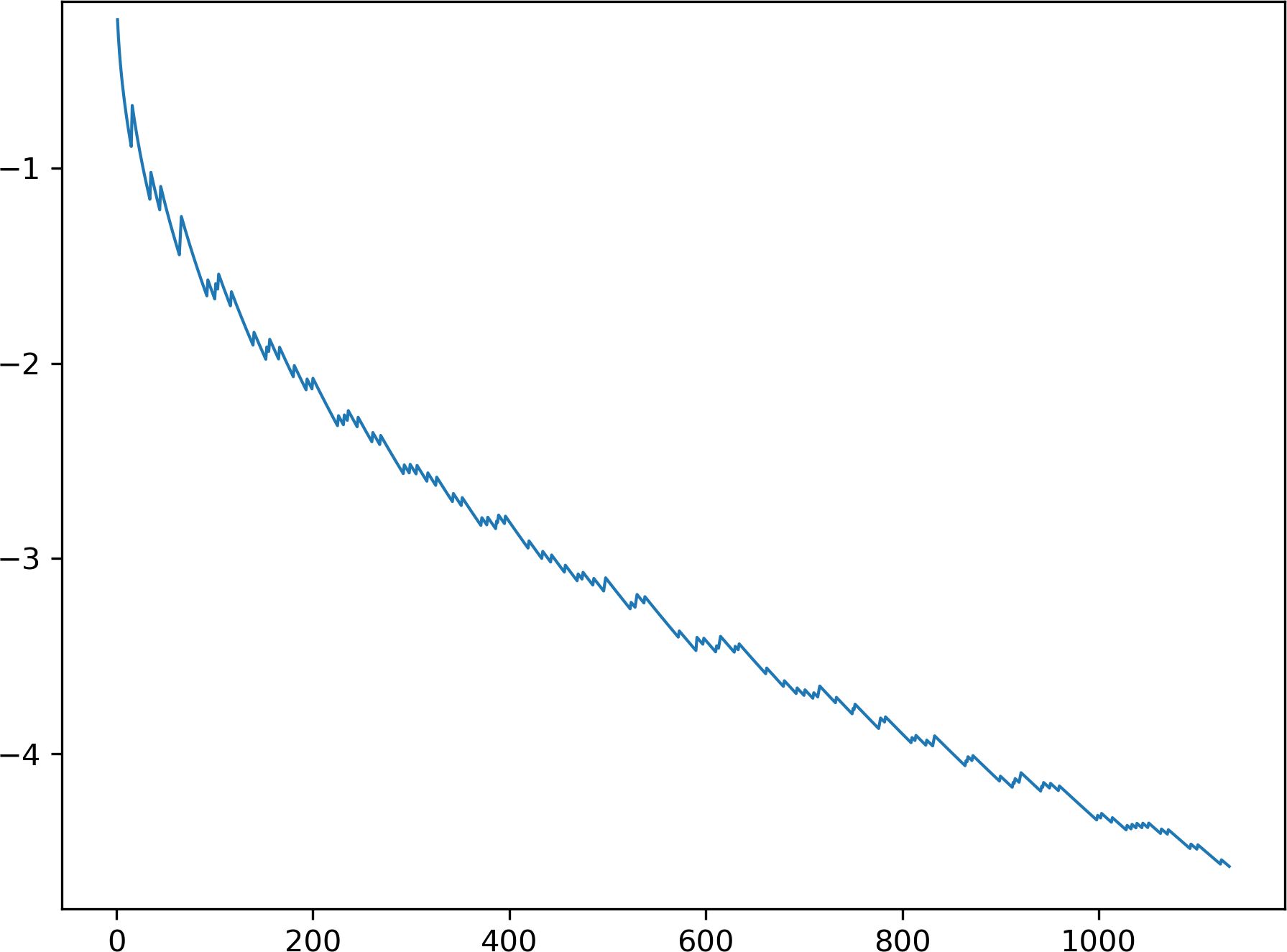}
    }
    \hfill
    \subfigure[$D_2(t)$]
    {
        \includegraphics[width=1.75in]{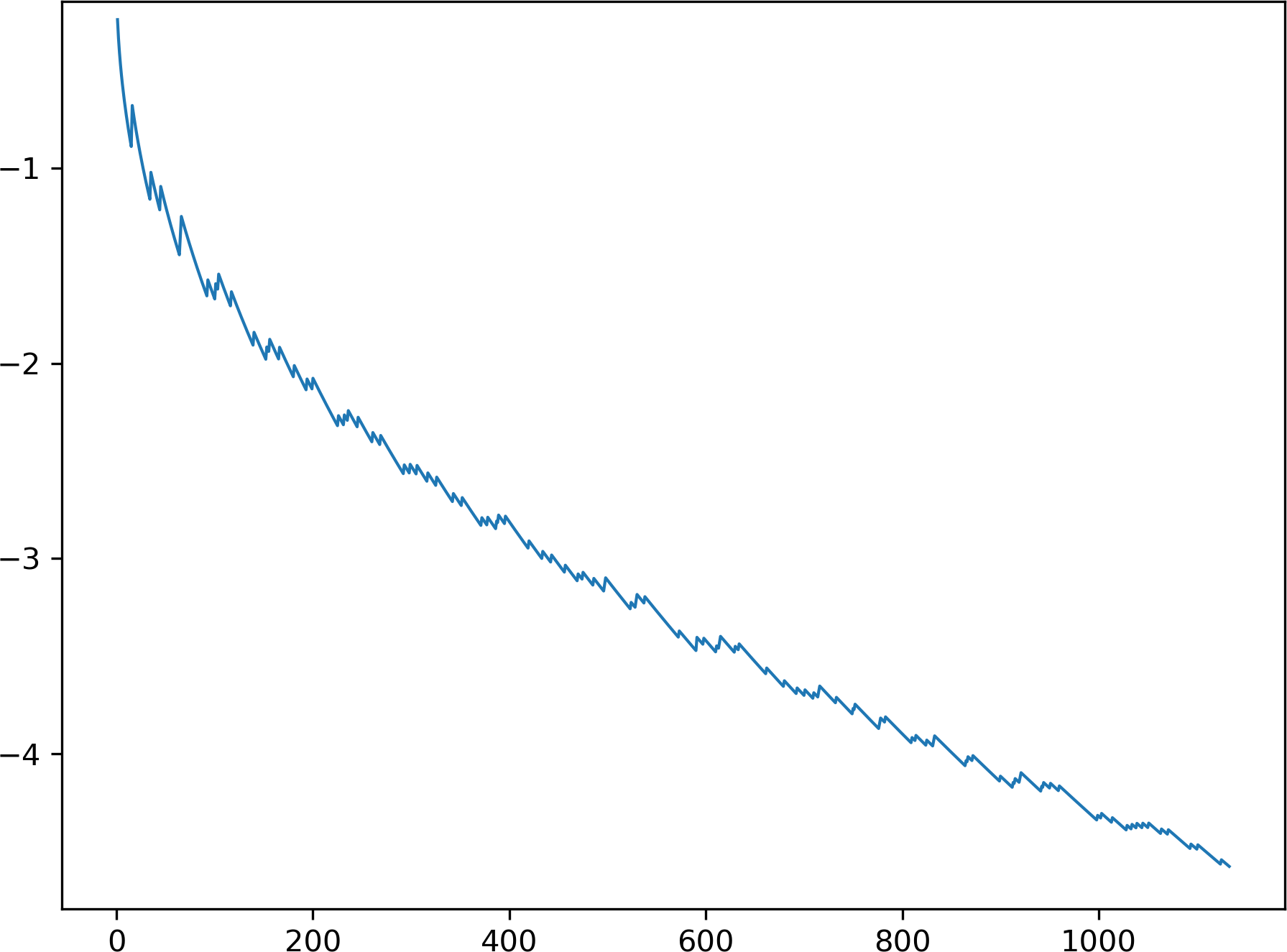}
    }
    \caption{For the Julia set to $z^2+0.2$ and Dirichlet BC}
    \label{fig:cfjd1}
\end{figure}

\begin{figure}[h!]
    \centering
    \subfigure[$N(t)$ (blue) and $\frac{A}{4\pi}t$ (orange)]
    {
        \includegraphics[width=1.75in]{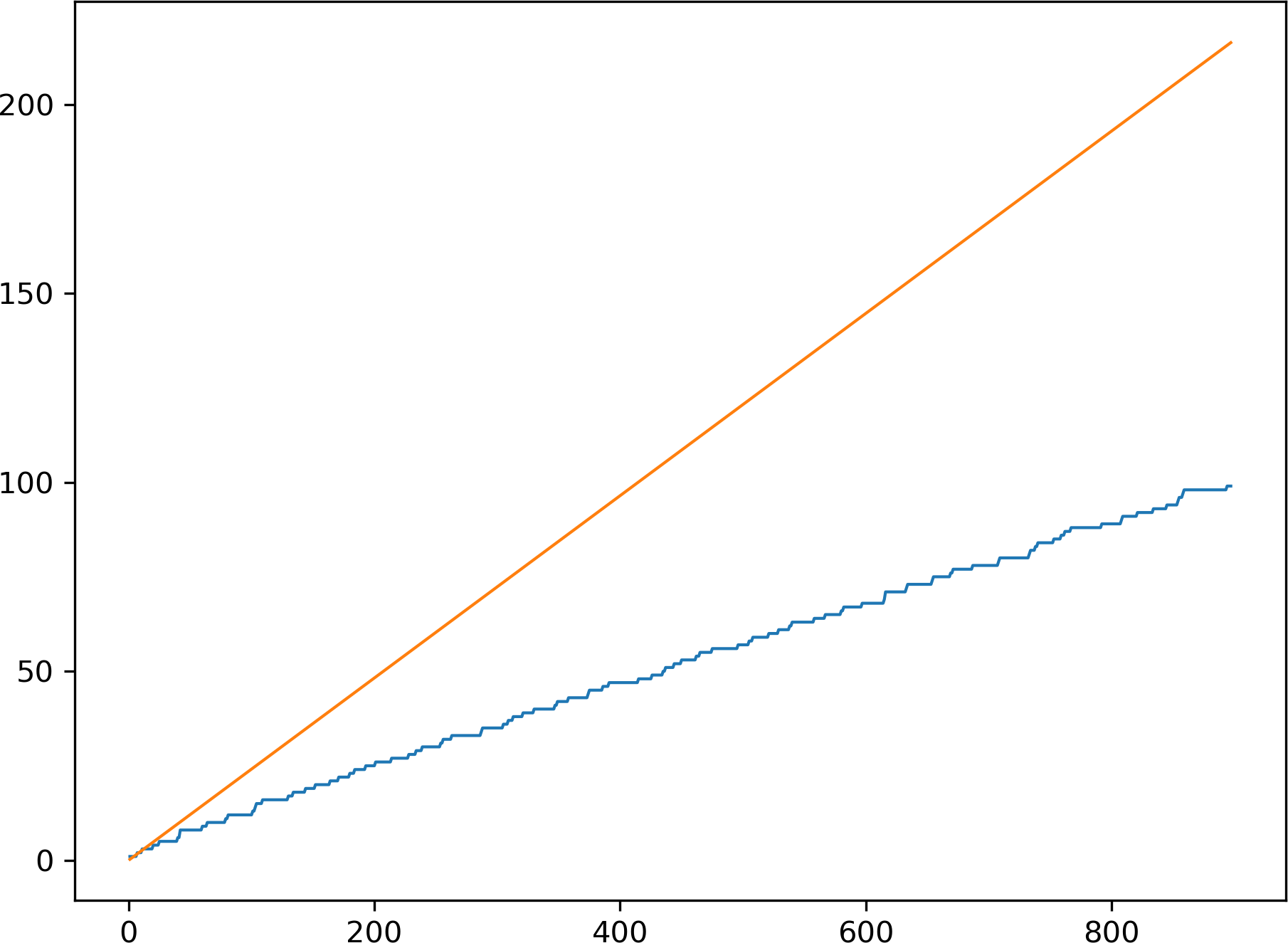}
    }
    \hfill
    \subfigure[$D_1(t)$]
    {
        \includegraphics[width=1.75in]{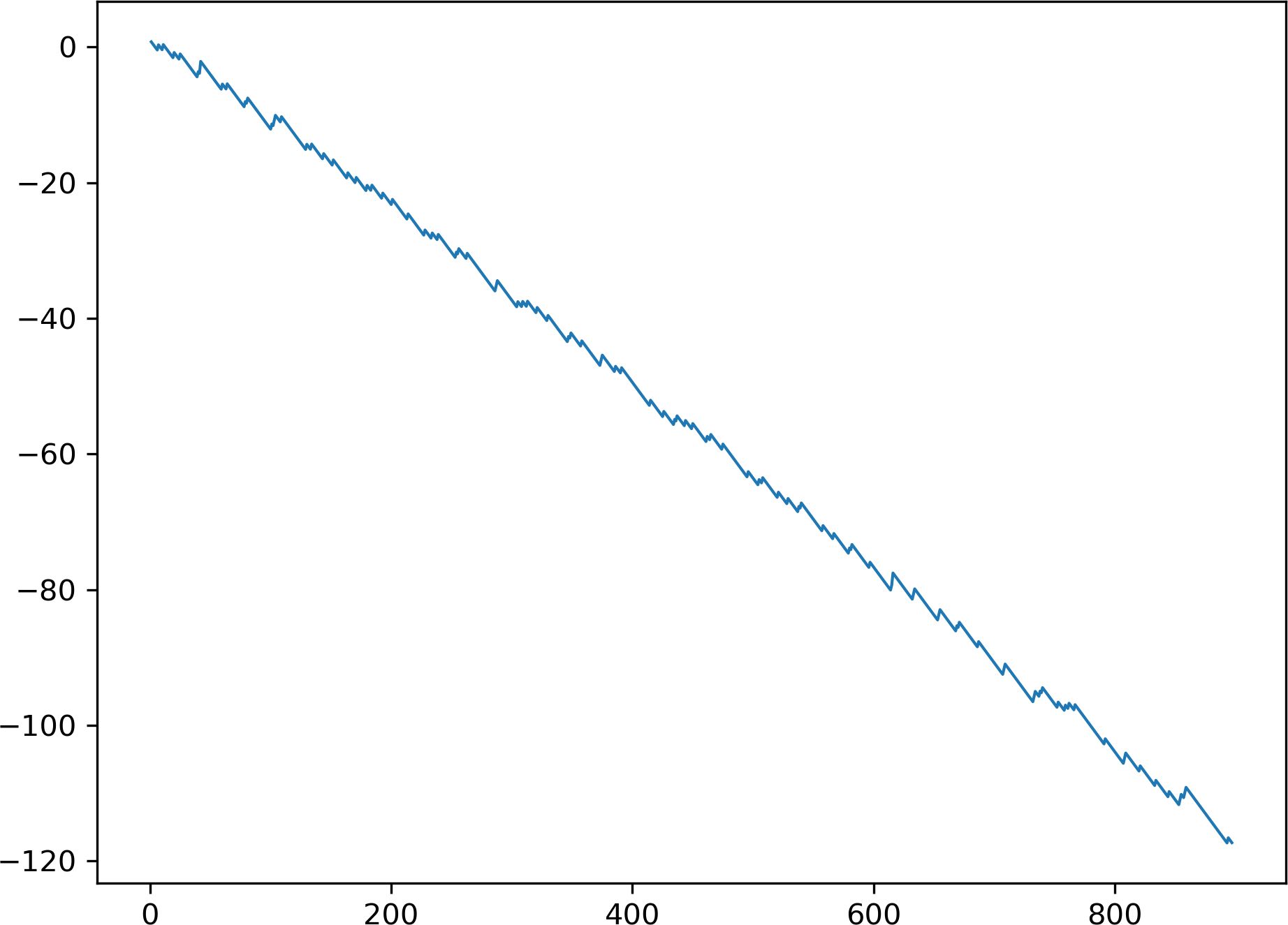}
    }
    \subfigure[$D_2(t)$]
    {
        \includegraphics[width=1.75in]{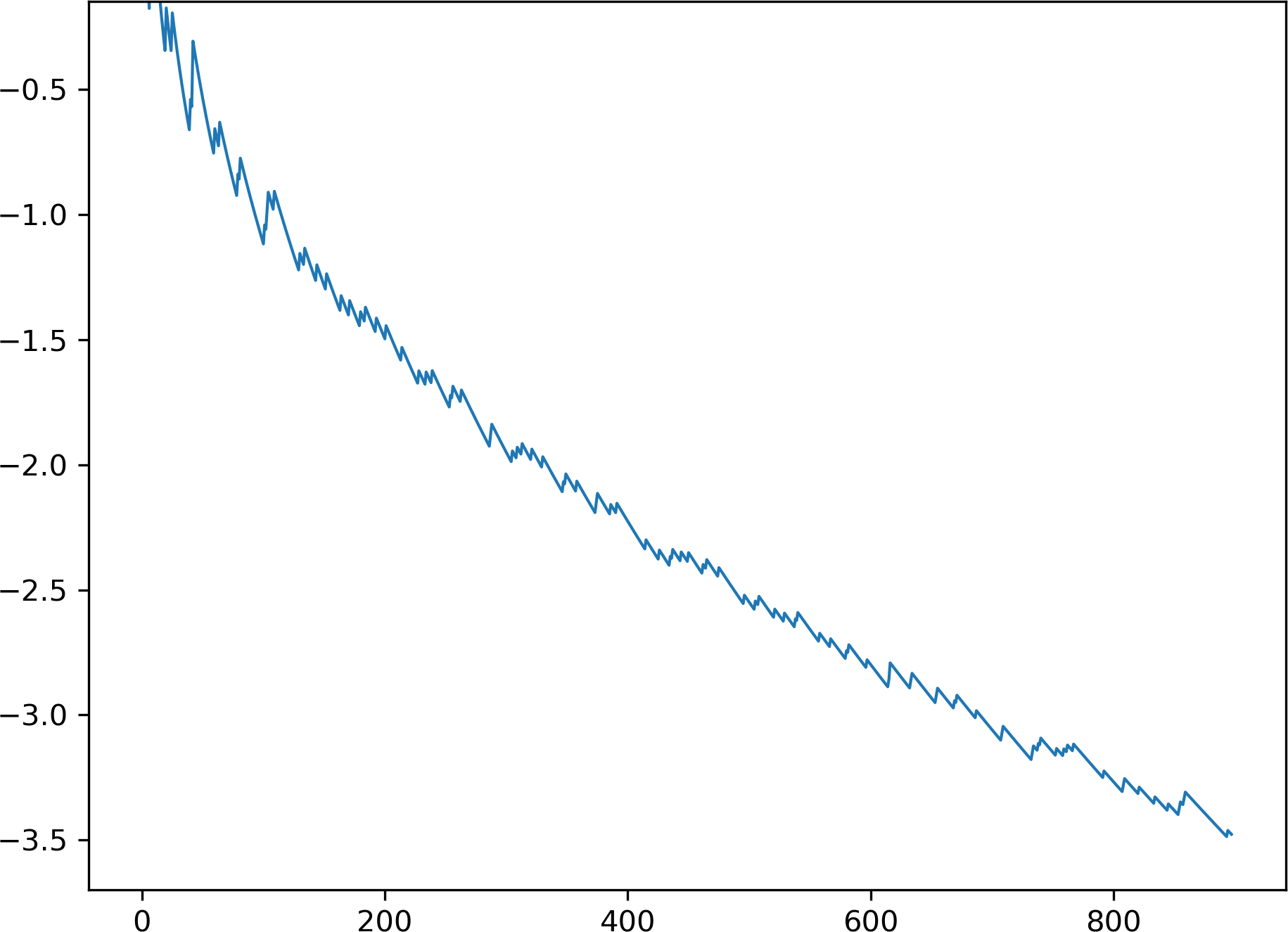}
    }
    \hfill
    \subfigure[$D_2(t)$]
    {
        \includegraphics[width=1.75in]{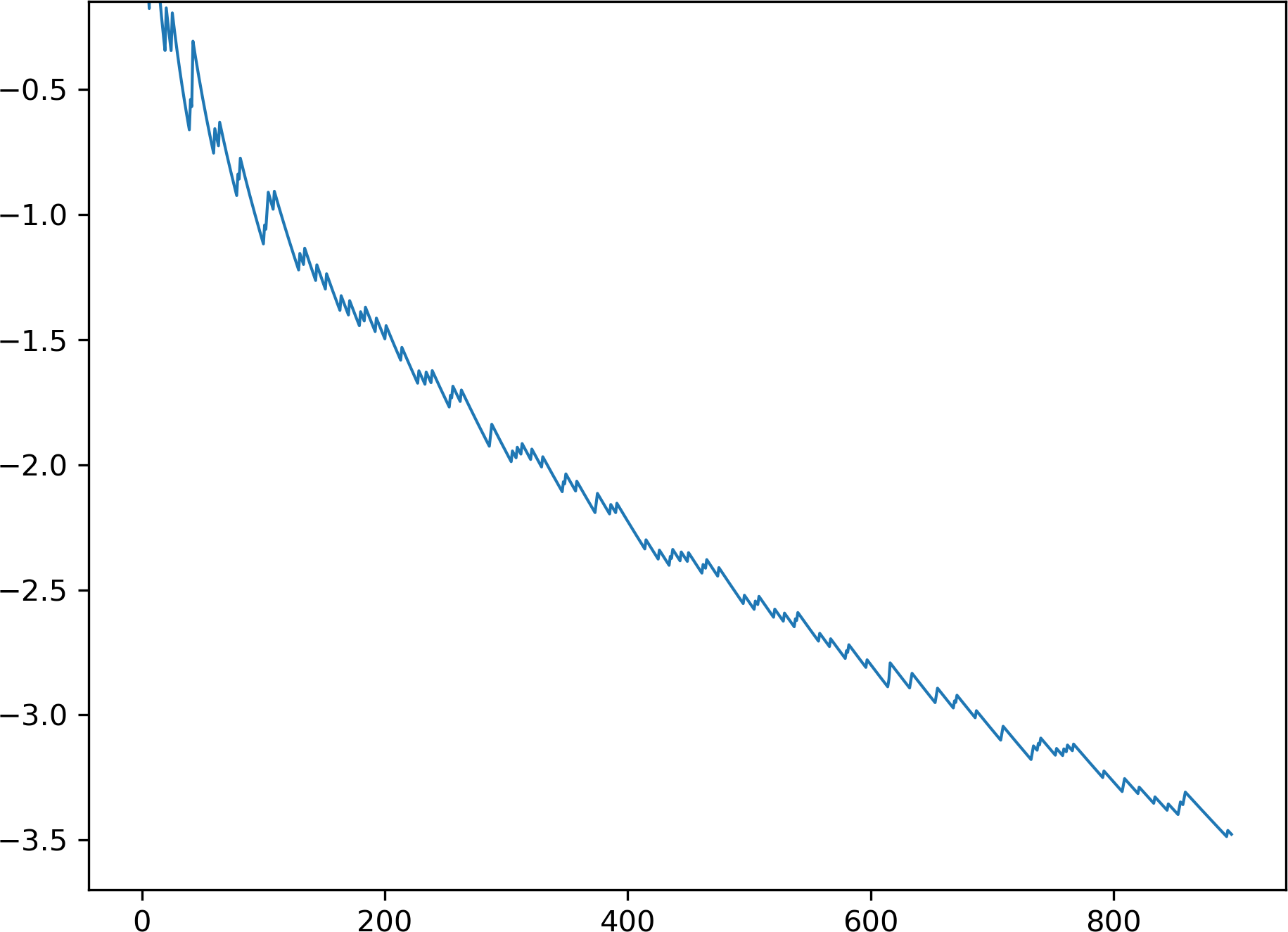}
    }
    \caption{For the Julia set to $z^2+0.2$ and Neumann BC}
    \label{fig:cfjn1}
\end{figure}

\clearpage

\section{Filled Julia set eigenfunctions.}

We show chosen eigenfunctions on four different Julia sets from the main bulb with Dirichlet and Neumann boundary conditions. Note that if $c$ is real, then the corresponding quadratic Julia set has the symmetry of an ellipse, for examples in Figures \ref{fig:js1} and \ref{fig:js3}. The eigenfunctions are symmetric with regards to at least one reflection and symmetric or skew-symmetric with respect to the other.

For non-real $c$, the Julia set has rotational symmetry and so do the eigenfunctions with Dirichlet or Neumann boundary conditions, for example in Figures \ref{fig:js2} and \ref{fig:js4}.

\vspace{0.5cm}

\begin{figure}[h]
    \centering
    \subfigure[For EV $\lambda_{11}=152.65$ with Dirichlet BC]
    {
        \includegraphics[width=1.8in]{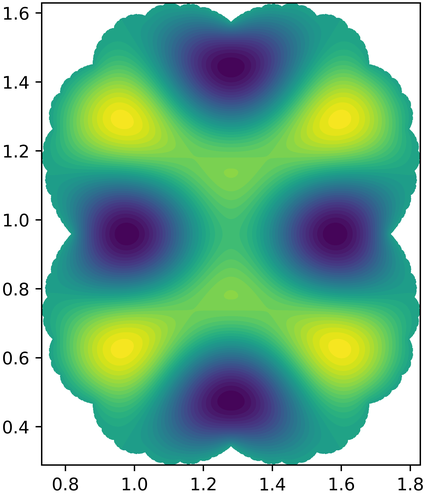}
    }
    \hfill
    \subfigure[For EV $\lambda_{18}=133.30$ with Neumann BC]
    {
        \includegraphics[width=1.8in]{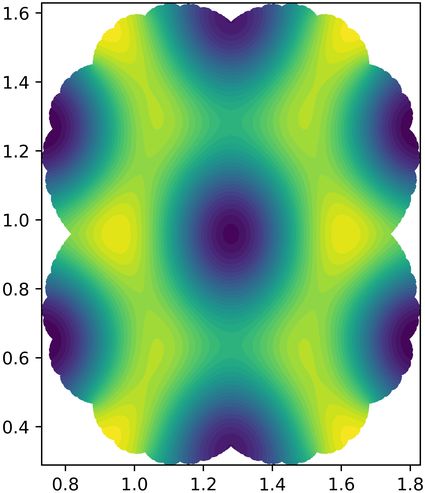}
    }
    \caption{Eigenfunctions on the filled JS to $z^2+0.2$}
    \label{fig:js1}
\end{figure}

\begin{figure}[h]
    \centering
    \subfigure[For EV $\lambda_{10}=199.32$ with Dirichlet BC]
    {
        \includegraphics[width=2in]{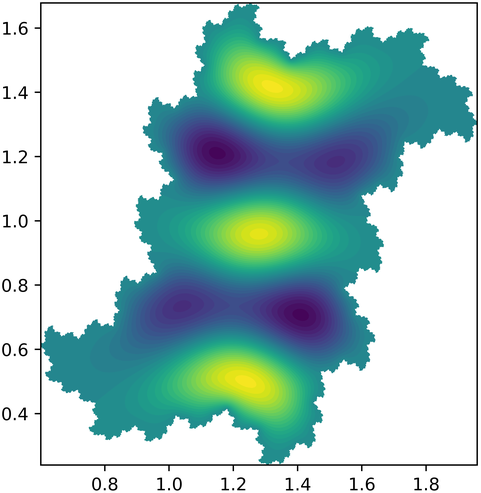}
    }
    \hfill
    \subfigure[For EV $\lambda_{10}=61.69$ with Neumann BC]
    {
        \includegraphics[width=2in]{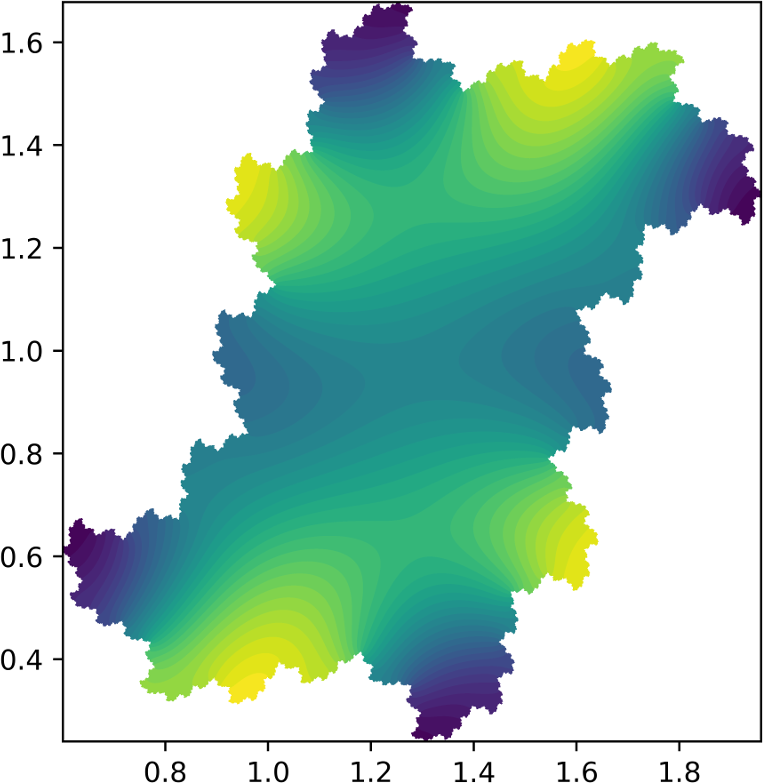}
    }
    \caption{Eigenfunctions on the filled JS to $z^2+0.2-0.45i$}
    \label{fig:js2}
\end{figure}

\begin{figure}[h]
    \centering
    \subfigure[For EV $\lambda_{9}=139.33$ with Dirichlet BC]
    {
        \includegraphics[width=2in]{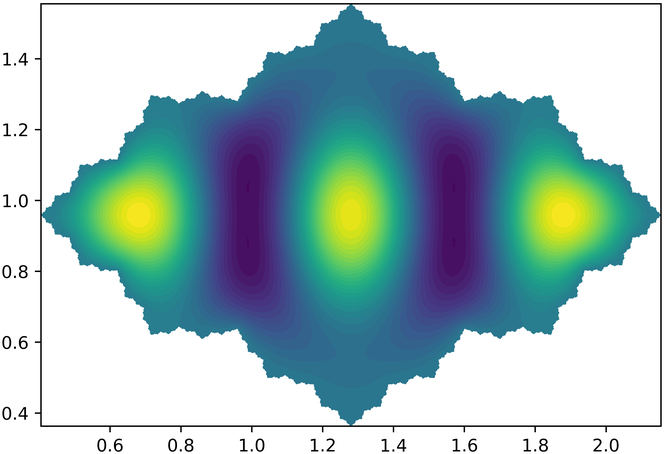}
    }
    \hfill
    \subfigure[For EV $\lambda_{13}=91.60$ with Neumann BC]
    {
        \includegraphics[width=2in]{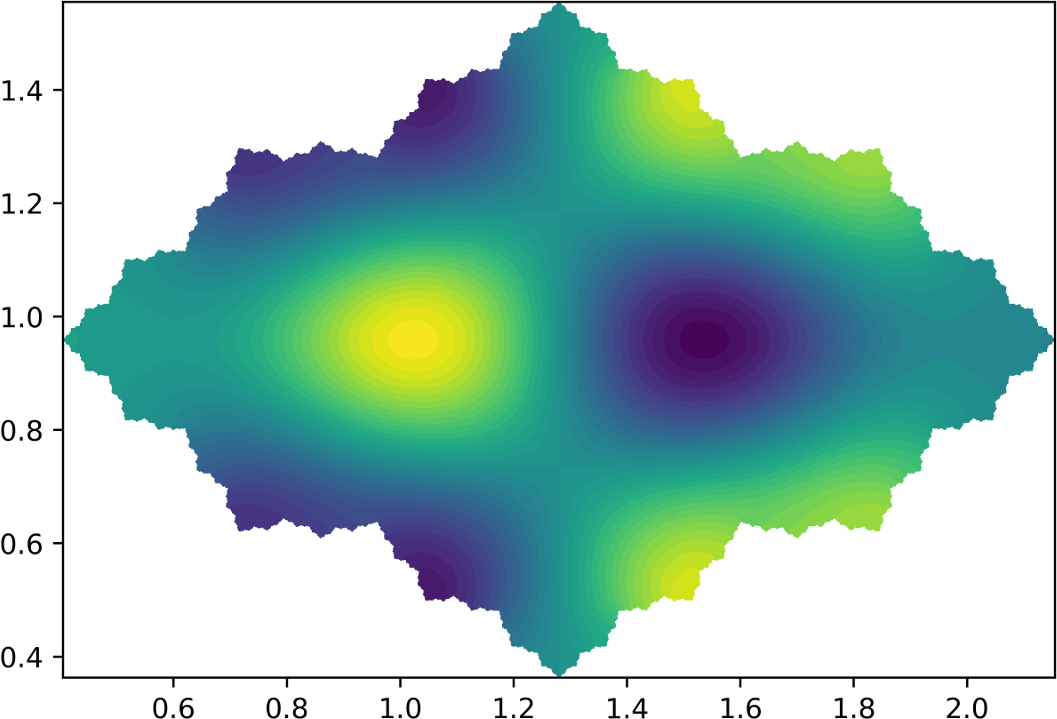}
    }
    \caption{Eigenfunctions on the filled JS to $z^2-0.5$}
    \label{fig:js3}
\end{figure}

\begin{figure}[h]
    \centering
    \subfigure[For EV $\lambda_{5}=142.09$ with Dirichlet BC]
    {
        \includegraphics[width=2in]{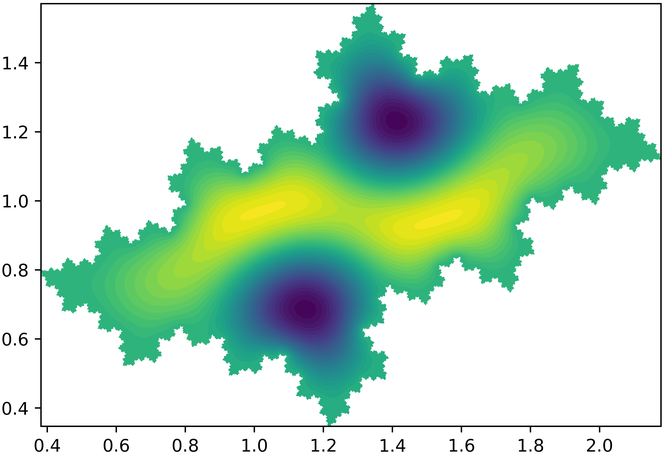}
    }
    \hfill
    \subfigure[For EV $\lambda_{7}=46.32$ with Neumann BC]
    {
        \includegraphics[width=2in]{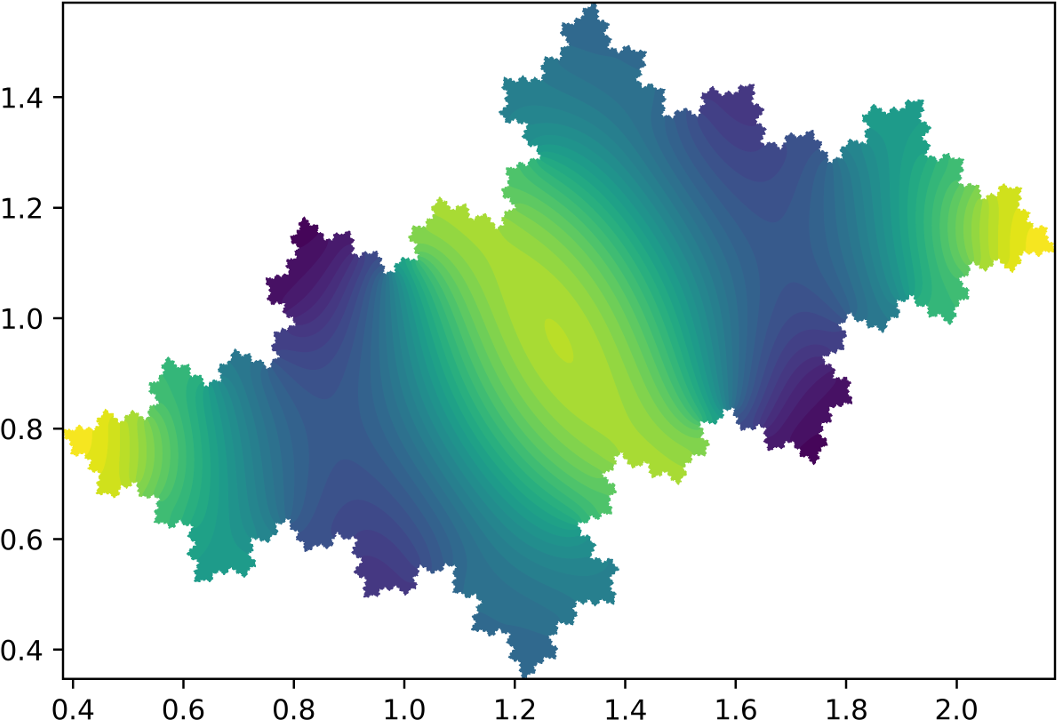}
    }
    \caption{Eigenfunctions on the filled JS to $z^2-0.5-0.45i$}
    \label{fig:js4}
\end{figure}

\clearpage

\section{Discussion.}

We have gathered a lot of experimental evidence converning the eigenvalues and eigenfunctions for snowflake domains and filled Julia sets. The next challenge is to describe properties of the spectral data and to extend the results to more general open sets with fractal boundary. Here are two interesting conjectures that arise from our data.

\begin{conjecture}
The maximal area of the filled Julia set associated with a parameter in the Mandelbrot set is $\pi$, attained by the unit disk. See Fig. \ref{fig:area}.
\end{conjecture}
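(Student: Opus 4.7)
The plan is to apply Gronwall's area theorem (the classical \emph{Flächensatz}) to the inverse Böttcher coordinate of the filled Julia set. For $c\in M$, the set $K_c$ is connected, so $\widehat{\mathbb C}\setminus K_c$ is a simply connected neighbourhood of $\infty$. Complex dynamics provides the Böttcher isomorphism $\phi_c:\widehat{\mathbb C}\setminus K_c\to\{|w|>1\}$ fixing $\infty$ with derivative $1$ and satisfying $\phi_c\circ p_c=\phi_c^{\,2}$. Its inverse $\psi_c$ has a Laurent expansion at infinity of the form
\begin{equation*}
\psi_c(w)=w+b_0+\sum_{n\ge1}b_n w^{-n},
\end{equation*}
and is univalent on $\{|w|>1\}$.

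The core step is to evaluate $\mathrm{Area}(K_c)$ via Green's theorem along $\psi_c(\{|w|=R\})$ and let $R\downarrow 1$. A direct expansion and integration of
\begin{equation*}
\frac{1}{2i}\oint_{|w|=R}\overline{\psi_c(w)}\,\psi_c'(w)\,dw
\end{equation*}
together with the Dirichlet energy computation
\begin{equation*}
\int_1^R\!\!\int_0^{2\pi}|\psi_c'(re^{i\theta})|^2\,r\,d\theta\,dr=\pi(R^2-1)+\pi\sum_{n\ge1}n|b_n|^2(1-R^{-2n})
\end{equation*}
yields, after cancellation, the classical identity
\begin{equation*}
\mathrm{Area}(K_c)=\pi\Bigl(1-\sum_{n\ge1}n|b_n|^2\Bigr)\le\pi.
\end{equation*}
This immediately gives the bound; no routine verification of this formula is needed beyond the standard derivation.

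For the equality case, $\mathrm{Area}(K_c)=\pi$ forces $b_n=0$ for every $n\ge 1$, so $\psi_c(w)=w+b_0$. Plugging this into the Böttcher functional equation $\psi_c(w^2)=\psi_c(w)^2+c$ gives $w^2+b_0=w^2+2b_0w+b_0^2+c$; comparing coefficients yields $b_0=0$ and then $c=0$. Finally, for $c=0$ one checks directly that $K_0=\overline{\mathbb D}$ has area $\pi$, completing the proof of the conjecture.

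The only non-routine ingredient is the existence of the Böttcher isomorphism with the prescribed normalization on all of $\widehat{\mathbb C}\setminus K_c$ when $c\in M$; this is precisely the statement that is equivalent to connectedness of $K_c$, and is exactly the content of the definition of $M$ used in the paper. Given that, every other step is either the area theorem or a one-line coefficient comparison, so the argument is short and the only real obstacle is citing the Böttcher theorem at the correct level of generality.
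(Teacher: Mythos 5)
The paper gives no proof of this statement at all: it appears only as a conjecture in the Discussion section, supported by the numerical area computations of Fig.~\ref{fig:area}. Your argument is therefore not a variant of the paper's proof but an actual resolution of the conjecture, and it is correct: for $c\in M$ the set $K_c$ is connected, the B\"ottcher coordinate extends to a conformal isomorphism $\phi_c:\hat{\mathbb{C}}\setminus K_c\to\{|w|>1\}$ with $\phi_c(z)=z+O(1/z)$ at infinity (the normalization $\phi_c'(\infty)=1$ coming from $p_c$ being monic), and Gronwall's area theorem applied to the univalent inverse $\psi_c(w)=w+b_0+\sum_{n\ge1}b_nw^{-n}$ gives $\mathrm{Area}(K_c)=\pi\bigl(1-\sum_{n\ge1}n|b_n|^2\bigr)\le\pi$. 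Two points deserve emphasis. First, the area theorem measures the full compact complement $\mathbb{C}\setminus\psi_c(\{|w|>1\})=K_c$, boundary included, so the bound is insensitive to the possibility that $J_c$ itself has positive area. Second, your equality analysis is right and even slightly stronger than the conjecture: $b_n=0$ for all $n\ge1$ forces $\psi_c(w)=w+b_0$, and the functional equation $\psi_c(w^2)=\psi_c(w)^2+c$ then yields $b_0=0$ and $c=0$, so the value $\pi$ is attained \emph{only} at the unit disk. The one ingredient you rightly isolate as non-routine, the extension of the B\"ottcher map to the entire basin of infinity precisely when $K_c$ is connected, is the standard Douady--Hubbard dichotomy and is safe to cite. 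In short, where the paper offers experimental evidence, you have supplied a complete classical-function-theory proof; the conjecture is in fact a theorem.
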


\begin{conjecture}
The box-counting dimension does not exist for a typical connected Julia set. See Fig. \ref{fig:bcd1a} and Fig. \ref{fig:bcd1b}. Of course the unit circle is an exception, as may also be the case for certain named examples, such as the Basilica and Rabbit.
\end{conjecture}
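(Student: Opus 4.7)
The plan is to interpret ``typical'' in the sense of Baire category (or parameter measure) on $\partial M$ and to show, for a residual set of $c \in \partial M$, that there exist two geometrically decreasing sequences of radii $r_n \to 0$ and $r_n' \to 0$ along which $\log N(r)/\log(1/r)$ tends to different limits. This would give $\underline{d}_B(J_c) < \overline{d}_B(J_c)$, so that the box-counting dimension cannot exist.

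First I would isolate the hyperbolic case and set it aside. When $c$ lies in the interior of a hyperbolic component of $M$ (in particular for the Basilica, the Rabbit, and other postcritically finite parameters at the centers of bulbs), $J_c$ is a quasi-circle carrying a conformal Cantor-like repeller structure, and classical results of Sullivan and Ruelle give coincidence of Hausdorff and box-counting dimensions together with real-analyticity in $c$. These parameters constitute the ``named exceptions'' anticipated by the conjecture, so the interesting regime is $c \in \partial M$.

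Next I would exploit the renormalization/tuning structure of $M$. Near every Misiurewicz point and along any sequence of small Mandelbrot copies accumulating on a boundary parameter, $J_c$ contains scaled (and rotated) quasi-conformal copies of combinatorially different Julia sets. For a topologically generic $c \in \partial M$, infinitely many combinatorially distinct such pieces appear at arbitrarily small scales. I would then translate this into box-counting estimates: if at the scale $r_n$ the dominant small-scale pieces of $J_c$ have covering exponent close to some $\alpha$, while at another scale $r_n'$ the dominant pieces have exponent close to $\beta > \alpha$, then $\log N(r)/\log(1/r)$ oscillates and the limit fails to exist.

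The hardest step will be making this dichotomy quantitative: producing a generic (or full-measure) set of parameters where $\underline{d}_B$ and $\overline{d}_B$ are \emph{provably strictly different}, rather than merely oscillating toward a common limit. This appears to require uniform distortion and conformal bounds along the renormalization tower, combined with Shishikura's theorem that $\dim_H J_c = 2$ for a generic $c \in \partial M$, so that the ``thick'' scales force the covering exponent close to $2$ while ``thin'' scales, on which a simple quasicircle-like piece dominates, force the exponent close to $1$. The numerical evidence in Figures~\ref{fig:bcd1a}, \ref{fig:bcd1b} and the error plot in Figure~\ref{fig:errors} strongly supports this oscillatory behavior, but turning it into a proof is likely beyond current techniques in one-dimensional complex dynamics, which is why the statement is recorded here only as a conjecture.
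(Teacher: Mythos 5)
The paper offers no proof of this statement: it is recorded purely as a conjecture, supported only by the numerical evidence in Figures~\ref{fig:bcd1a}, \ref{fig:bcd1b} and \ref{fig:errors} (the failure of the log--log fits to stabilize as the image resolution increases). So there is no argument in the paper to compare yours against; what you have written is a research programme, not a proof, and you concede as much in your final sentence. That said, the programme contains a concrete logical flaw worth flagging. You propose to read ``typical'' as Baire-generic in $\partial M$ and to close the argument by invoking Shishikura's theorem that $\dim_H J_c = 2$ for a generic $c \in \partial M$. But for any bounded planar set one has $\dim_H E \le \underline{\dim}_B E \le \overline{\dim}_B E \le 2$, so on the very residual set where Shishikura's theorem applies you get $\underline{\dim}_B J_c = \overline{\dim}_B J_c = 2$: the box-counting dimension \emph{exists} (and equals $2$) there. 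Your key external input therefore proves the opposite of what you want on the generic set you chose, and the ``thick scales near exponent $2$ versus thin quasicircle-like scales near exponent $1$'' oscillation cannot occur for those parameters, since the lower box dimension is already pinned at $2$.

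There is also an issue of interpretation that any eventual proof must confront. The paper's figures sweep $c$ along slices that pass mostly through the \emph{interior} of $M$; for hyperbolic parameters the Julia set is a conformal repeller and the box-counting dimension provably exists and coincides with the Hausdorff dimension (the Sullivan--Ruelle theory you correctly cite). So under the reading ``typical $c \in M$'' the conjecture is false on the (conjecturally full-measure) hyperbolic interior, and under the Baire reading on $\partial M$ your own machinery shows it is false generically. If the conjecture has a correct precise formulation at all, it presumably concerns some other notion of typicality (e.g.\ Lebesgue-a.e.\ on $\partial M$, or typical with respect to harmonic measure), and the observed numerical oscillation may equally well reflect the breakdown of pixel-counting at accessible scales rather than genuine divergence of $\log N(r)/\log(1/r)$. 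Any honest attempt should first fix the notion of ``typical'' in a way that survives these two obstructions.
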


\clearpage

\bibliographystyle{amsplain}

\end{document}